\newcommand{\an}{\mathrm{an}}
\newcommand{\zar}{\mathrm{zar}}
\newcommand{\rig}{\mathrm{rig}}
\newcommand{\dR}{\mathrm{dR}}
\newcommand{\Q}{\mathbb{Q}}
\newcommand{\C}{\mathbb{C}}
\newcommand{\Z}{\mathbb{Z}}
\newcommand{\R}{\mathbb{R}}
\newcommand{\A}{\mathbb{A}}
\newcommand{\K}{\mathbb{K}}
\newcommand{\Sch}{\mathrm{Sch}}
\newcommand{\iHom}{{\mathcal{H}om}}
\newcommand{\scrX}{\mathscr{X}}
\newcommand{\scrY}{\mathscr{Y}}
\newcommand{\scrV}{\mathscr{V}}
\newcommand{\scrW}{\mathscr{W}}
\newcommand{\scrZ}{\mathscr{Z}}
\newcommand{\scrD}{\mathscr{D}}
\renewcommand{\O}{\mathcal{O}}
\DeclareMathOperator{\Cone}{Cone}
\DeclareMathOperator{\Hom}{Hom}
\DeclareMathOperator{\id}{id}
\DeclareMathOperator{\Spec}{Spec}
\DeclareMathOperator{\tr}{tr}
\DeclareMathOperator{\gr}{gr}
\DeclareMathOperator{\Coim}{Coim}
\DeclareMathOperator{\Ker}{Ker}
\DeclareMathOperator{\Coker}{Coker}
\DeclareMathOperator{\supp}{supp}
\DeclareMathOperator*{\colim}{colim}
\renewcommand{\Im}{\mathrm{Im}}
\DeclareMathOperator{\length}{length}
\newcommand{\vuoto}{\varnothing}
\newcommand{\x}{\times}
\newcommand{\ox}{\otimes}
\newcommand{\iso}{\cong}
\newcommand{\inv}{^{-1}}
\newcommand{\dual}{^{\vee}}
\newtheorem{thr}{Theorem}[section]
\newtheorem{lmm}[thr]{Lemma}
\newtheorem{prp}[thr]{Proposition}
\newtheorem{crl}[thr]{Corollary}
\theoremstyle{definition}\newtheorem{dfn}[thr]{Definition}
\theoremstyle{remark}\newtheorem{rmk}[thr]{Remark}
\theoremstyle{remark}\newtheorem{exm}[thr]{Example}
\theoremstyle{remark}\newtheorem{prg}[thr]{-}
\newcommand{\syn}{{\rm syn}}
\newcommand{\reg}{{\rm reg}}
\newcommand{\Gdm}{{\rm Gd}}
\newcommand{\abs}{{\rm abs}}
\title{Cycle classes and the 
syntomic regulator}
\author{B. Chiarellotto, A. Ciccioni, N. Mazzari}
\date{}
\begin{document}
\maketitle
%\small{\tableofcontents}
\abstract{Let $\scrV=\Spec(R)$ and  $R$ be a complete discrete valuation ring  of mixed characteristic  $(0,p)$.
For any  flat  $R$-scheme $\scrX$  we prove the compatibility of the de Rham fundamental  class of the generic fiber and the rigid fundamental class of the special fiber.  We use this result to construct a syntomic regulator map $\reg_\syn:CH^i(\scrX/\scrV,2i-n)\to H^n_{\syn}(\scrX,i)$, when  $\scrX$ is smooth over $R$, with values on the  syntomic cohomology defined  by  A. Besser. Motivated by the previous result  we also prove some of the Bloch-Ogus axioms for the syntomic  cohomology theory, but viewed as an absolute cohomology theory.\\
\emph{MSC: 14F43, 14F30, 19F27}.}

%
%
%!TEX root = /Users/Maz/Documents/Math/syn-root.tex
%
\section*{Introduction}
Let $\scrV=\Spec(R)$, with $R$ a complete discrete valuation ring of mixed characteristic and perfect residue field. Given  $\scrX$  an algebraic  $\scrV$-scheme one can consider the de Rham cohomology of its generic fiber $\scrX_K$ and the rigid cohomology of its special fiber $\scrX_k$. These two cohomology groups are related (see \cite[\S6]{BalCaiFio:Poi04}) by a canonical \emph{cospecialization} map $cosp:H_{\rig,c}^n(\scrX_k)\to H^n_{\dR,c}(\scrX_K)$ (not an isomorphism in general). There is also a notion of rigid and de Rham cycle class. The starting result of this paper is the compatibility of this cycle classes w.r.t. the cospecialization map (see \ref{thr:key} for the precise statement). \\
In the case $\scrX$ is smooth (possibly non-proper) over $\scrV$ we get the following corollary  (see Cor.~\ref{crl:cycle class + sp}): let $sp_{CH}:CH^*(\scrX_K)\ox\Q\rightarrow CH^*(\scrX_k)\ox\Q$ be the specialization of Chow rings constructed by Grothendieck in \cite[Appendix]{SGA6}, then the following diagram is  commutative
\begin{equation*}
\xymatrix{
CH^q (\scrX_K/K)\ox\Q\ar[d]_{sp_{CH}}\ar[r]^{\eta_\dR}& H^{2q}_\dR(\scrX_K)\ar[d]^{sp}\\
CH^q(\scrX_k/k)\ox\Q\ar[r]_{\eta_\rig} & H^{2q}_\rig(\scrX_k/K) &,}
\end{equation*}
where $\eta_\dR$ (resp. $\eta_\rig$) is the de Rham (resp. rigid) cycle class map and $sp$ is the Poincaré dual of $cosp$.\\
In the proof we use  the main results  of \cite{BalCaiFio:Poi04}, \cite{BosLutRay:For95} and \cite{Pet:Cla03}. 

This result can be viewed as a generalization of a theorem of Messing \cite[Theorem~B3.1]{GilMes:Cyc87} where he further assumes $\scrX$ to be proper  (not only smooth) over $\scrV$. In that case rigid cohomology coincides with the  crystalline one and the map $sp$ is an isomorphism (\cite{Ber:Fin97}).

This compatibility result is the motivation for  an alternative construction of the regulator map (see Prop.~\ref{prp:regsyn})
	\[
		\reg_\syn:CH^{i}(\scrX/\scrV,2i-n)\to H^n_{\syn}(\scrX,i)
	\]
with values in the syntomic cohomology group defined by Besser in \cite{Bes:Syn00} (for $\scrX$ smooth over $\scrV$). For this proof we use  an argument of Bloch \cite{Blo:Alg86h} and the existence of  a syntomic cycle class (see Prop.~\ref{prp:syncyc}). 

The aforementioned results motivated us to investigate further the properties of the syntomic cohomology. 
We are not able to formulate even the basic Bloch-Ogus axioms using  Besser's framework. Thus
we  followed   Bannai's interpretation of syntomic cohomology as an absolute one \cite{Ban:Syn02}. To this aim we define a triangulated  category of $p$-adic Hodge complexes, $pHD$ (see Def.~\ref{def:pHC}). An object $M$ of $pHD$  can be represented by a diagram of the form 
$M_\rig\to M_K\gets M_\dR$ where $M_?$ is a complex of $K$-vector spaces endowed with a Frobenius automorphism when $?=\rig$ and with a filtration when $?=\dR$. On $pHD$ there is a naturally defined  tensor product and we denote the unit object of $pHD$ by $\K$. The main difference  w.r.t. \cite{Ban:Syn02}  being that the maps in the diagram are not necessarily quasi-isomorphisms. \\
From \cite{Bes:Syn00} we get (in Prop.~\ref{prp:RGamma functors}) that there are functorial $p$-adic Hodge complexes $R\Gamma(\scrX)$ 
\[
	R\Gamma_\rig(\scrX)\to R\Gamma_K(\scrX)\gets R\Gamma_\dR(\scrX)
\]
 inducing the specialization map in cohomology (\textit{i.e.}, taking the cohomology of each element of the diagram).  Meanwhile  we show how the constructions made by Besser may be obtained using the theory of generalized Godement resolution (also 
called the bar resolution). In particular we use the results of \cite{PutSch:Poi95} in order to have enough points for rigid analytic spaces.
 Further we   consider the twisted version  $R\Gamma(\scrX)(i)$, which is given by the same complexes, but with the Frobenius (resp. the filtration)  twisted by $i$ (see Remark~\ref{rmk:tatetwist}).

Then we prove (see Prop.~\ref{prp:bessercomp}) that  the syntomic cohomology groups $H^n_\syn (\scrX,i)$ of \cite{Bes:Syn00} are isomorphic  to  the (absolute cohomology) groups  $H^n_\abs (\scrX,i):=\Hom_{pHD}(\K,R\Gamma(\scrX)(i)[n])$. This result generalizes  that of \cite{Ban:Syn02} (which was given for some particular $\scrV$-schemes with good compactification) to any smooth algebraic $\scrV$-scheme.

For this absolute cohomology we can prove some of the Bloch-Ogus axioms. \\
In fact we construct a $p$-adic Hodge complex $R\Gamma_c(\scrX)(i)$ related to rigid and de Rham cohomology with compact support. So we can define an absolute cohomology with compact support functorial with respect to proper maps
$H^n_{\abs,c} (\scrX,i):=\Hom_{pHD}(\K,R\Gamma_c(\scrX)(i)[n])$
and an absolute homology theory $H_{n}^\abs(\scrX,i):=\Hom_{pHD}(R\Gamma_c(\scrX)(i)[n],\K)$.

We want to point out that the above constructions are essentially consequences of the work done by Besser and Bannai, but it seems hard to prove the following results without the formalism of Godement resolutions we develop in \S~\ref{sec:gdm}: let $\scrX$ be a smooth scheme over $\scrV$, then
\begin{enumerate}[(i)]
	\item there is a cup product pairing 
	\[
		H^n_\abs(\scrX,i)\ox H^m_{\abs,c}(\scrX,j)\to H^{n+m}_{\abs,c}(\scrX,i+j)
	\]
	induced by the natural pairings defined on the cohomology of the generic and the special fiber (see Cor.~\ref{crl:pairing});
	\item there is a Poincaré duality isomorphism (see Prop.~\ref{prp:dual}); 
	\item there is a Gysin map, \textit{i.e.} given a proper morphism $f:\scrX\to \scrY$  of smooth algebraic $\scrV$-schemes of relative dimension  $d$ and $e$, respectively, then there is a canonical map
	\[
		f_*:H_\abs^n(\scrX,i)\rightarrow H_\abs^{n+2c}(\scrY,i+c) 
	\]
	where $c=e-d$ (see Cor.~\ref{crl:gysin}).
\end{enumerate}
 %
%
% paragraph paragraph_name (end)
%
%
%
\paragraph*{Notation}
In this paper $R$ is a complete discrete valuation ring with fraction field $K$ and residue field $k$, with $k$ perfect. We assume $char(K)=0$ and $char (k)=p>0$. 

We denote by $R_0$ the ring of Witt vectors of $k$  and $K_0$ is its field of fractions. The Frobenius of $K_0$ is denoted by $\sigma$. \\
The category of bounded complexes of $K$-vector spaces is denoted by $C^b(K)$.

If $V$ is a $K$-vector space, then $V\dual$ is the dual vector space. 

We use $X,Y,...$ for schemes over $k$ or $K$; $\cal X, \cal Y, ...$ for $K$-analytic spaces; 
$\sf P, Q$ for formal $K$-schemes and ${\sf P}_K, {\sf Q}_K$ for the associated Raynaud fibers; $\scrX, \scrY$ 
for (algebraic) $\scrV$-schemes, where $\scrV=\Spec(R)$. Finally, the $p$-adic completion of a $\scrV$-scheme $\scrX$ will be denoted by $\widehat{\scrX}$.
\section{Cycle classes}
\begin{prg}[Higher Chow groups]
	Following Bloch \cite{Blo:Alg86h} we recall the definition of the higher Chow groups $CH^*(X/K,*)$ of a $K$-scheme $X$.\\
	For any $n\le 0$ let $\Delta^n=\Spec(\Z[t_0,...,t_n]/(\sum_it_i-1))$ with face maps $\partial_i(n):\Delta^n\to \Delta^{n+1}$, which in coordinates are given by $\partial_i(n)(t_0,...,t_n)=(t_0,..,t_{i-1},0,t_{i+1},..,t_n)$. Let $X$ be a smooth $K$-scheme of relative dimension $d$ (this hypothesis is not necessary, but we deal only with smooth schemes). Let $z^q(X /K,0)$ be the free abelian group generated by the irreducible and closed sub-schemes of $X$ of codimension $q$. We denote by $z^q(X / K,n)$ the free abelian group generated by the closed sub-schemes $W \subset \Delta^n_{X}:=X\x \Delta^n$ such that $W\in z^q(\Delta^n_X /K,0)$ and meets all faces properly: \textit{i.e.}, if $F\subset \Delta^n_X$ is a face of codimension $c$, then the codimension of the irreducible components of the intersection $F\cap W$ is 
greater than or equal to $c+q$ on $\Delta^n_X$.

	 Using the differential $\sum_i(-1)^i\partial_i^*(n):z^q(X/K,n+1)\to z^q(X/K,n)$ one obtains a complex of abelian groups $z^q(X/K,*)$. We set $\Gamma_X^i(q):=z^q(X/K,2q-i)$ and 
	\[
		CH^{q}(X/K,2q-i):=H^i(\Gamma_X(q))\ .
	\] 
	these groups are in fact isomorphic to the  Voevodsky--Suslin  motivic cohomology $H_{mot}^i(X/K,q)$ of 
the generic fiber $X$ (\cite[Theorem 19.1]{MazVoeWei:Lec06}).
\end{prg}
\begin{rmk}[Relative cycles]
	Let $\scrX$ be an algebraic and flat $\scrV$-scheme. By the theory of relative cycles \cite{SusVoe:Rel00} one can define the group $z^q(\scrX/\scrV,0)$ to be the free abelian group generated by \emph{universally integral relative cycles} of codimension $q$ (we can use the codimension because $\scrX$ is assumed to be equidimensional over $\scrV$). 
	By \cite[Part I, Lemma~1.2.6]{Ivo:Rea05} $z^q(\scrX/\scrV,0)$ is the free abelian group generated by   the closed sub-schemes $\scrW\subset \scrX$ which are integral, of codimension $q$ and flat over $\scrV$. Then we can define the groups $z^q(\scrX/\scrV,n)$ as follows:  let $z^q(\scrX/\scrV,n)$ be the free abelian group generated by  the integral and flat $\scrV$-schemes $\scrW\in z^q(\Delta^n_\scrX,0)$ meeting all faces properly and such that $\partial_i(n-1)^*\scrW$ is flat over $\scrV$ for all $i$. Thus we can form a complex $z^q(\scrX/\scrV,*)$  using the same boundary maps of $z^q(X/K,*)$.
\end{rmk}
\begin{dfn}
	With the above notation we define the higher Chow groups of $\scrX$ over $\scrV$ to be
	\[
		CH^{q}(\scrX/\scrV,2q-i):=H^i(\Gamma_X(q))
	\]
	where $\Gamma_{\scrX/\scrV}^i(q):=z^q(\scrX/\scrV,2q-i)$.
\end{dfn}
\begin{rmk}\label{rmk:gamma}
	\begin{enumerate}[(i)]
		\item Recall that by Lemma~\ref{lmm:platification} any closed and flat sub-scheme of $\scrX$ is completely determined by its generic fiber. Then $z^q(\scrX/\scrV,*)$ is a sub-complex of $z^q(\scrX_K/K,*)$ inducing a canonical map in (co)-homology
		\[
			\gamma:CH^{q}(\scrX/\scrV,2q-i):=H^i(\Gamma_{\scrX/\scrV}(q))\longrightarrow CH^{q}(\scrX_K/K,2q-i)\ .
		\]
		\item 	It follows easily by the Snake lemma that, for  $i=2q$, the map  $\gamma:CH^{q}(\scrX/\scrV,0)\to CH^{q}(\scrX_K/K,0)$  is  surjective.	In the general case with don't know wether this map is injective or surjective.
	\end{enumerate}
\end{rmk}
%
%{\bf Notation.} From now on we will simply write $CH^{q}(\scrX,2q-i)$ to denote the higher Chow groups $CH^{q}(\scrX/\scrV,2q-i)=CH^{q}(\scrX_K/K,2q-i)$ whose elements can be thought of either as classes of higher cycles of the generic fiber $\scrX_K$ or 
%as classes of \emph{relative} higher cycles of $\scrX$ over $\scrV$.
%
\begin{prg}[De Rham and rigid fundamental/cycle classes]
	In the following  we refer to \cite{Har:On-75,BalCaiFio:Poi04,Pet:Cla03} for the definitions and the properties of the (algebraic) de Rham and the rigid cohomology theory.\\
	Let $W$ be an integral scheme of dimension $r$ over $K$ (resp. over $k$). Then we can associate to $W$ its de Rham  (resp. rigid) fundamental class which is an element of  the dual of the top de Rham (resp. rigid) cohomology with compact support
	\[
		[W]_\dR=\tr_\dR\in H^{2r}_{\dR,c}(W)\dual\ ,\ \text{resp.}\ [W]_\rig=\tr_\rig\in H^{2r}_{\rig,c}(W)\dual \ .
	\]
	For the de Rham cohomology this class is first defined in \cite[\S7]{Har:On-75} as an element of the de Rham homology; by Poincaré duality \cite[Theorem 3.4]{BalCaiFio:Poi04} it corresponds to the trace map. 
	The rigid case is treated in \cite[\\SS2.1, 6]{Pet:Cla03}. 
	
	Now let $X$ be a $K$-scheme  (resp. $k$-scheme) of dimension $d$ and $w=\sum_i n_i W_i\in z^q(X/K,0)$ (resp. $\in z^q(X/k,0)$)  be a dimension $r$ cycle on $X$. The cohomology with compact support is functorial with respect to proper maps, hence there is a canonical map
	\[
		f:\oplus_i H^{2r}_{?,c}(W_i)\dual\to H^{2r}_{?,c}(|w|)\dual
	\] 
	where $|w|=\cup_iW_i$ is the support of $w$ and $?=\dR,\rig$ accordingly to the choice of the base field. With the above notations we define the de Rham (resp. rigid) cycle class of $w$ as
	\[
		[w]_?:=f\left(\sum_i[W_i]_?	\right)\in H^{2r}_{?,c}(|w|)\dual\qquad ?=\dR, \rig\ .
	\] 
	Again by functoriality this defines an element of $H^{2r}_{?,c}(X)\dual$.
	\end{prg}
Let $\scrX$ be a flat $\scrV$-scheme  and let $w\in z^q(\scrX/\scrV,0)$ be a relative cycle. We can write $w=\sum_i n_i \scrW_i$, where $\scrW_i$ is an integral flat $\scrV$-scheme closed in $\scrX$ and of codimension $q$. Then $w$ defines a cycle $w_K\in z^q(\scrX_K/K,0)$ (resp. $w_k\in z^q(\scrX_k/k,0)$): on the generic fiber we get simply  $w_K=\sum_i n_i (\scrW_i)_K$.  However, on the special fiber 
we must write the irreducible decomposition  of each $(\scrW_i)_k^{\rm red}$, say $W_{i,1}\cup \cdots \cup W_{i,r_i}$, 
and then consider the multiplicities, \textit{i.e.},
\[
	w_k=\sum_i\left(n_i\sum_jm_{i,j}W_{i,j}\right)
\] 
where $m_{i,j}:=\length \O_{\scrW_k, W_{i,j}}$.

We can consider  the de Rham and the rigid  cycle classes of $w$, \textit{i.e.}, 
\[
	[w_K]_\dR\in H^{2r}_{\dR,c}(|w_K|)\dual\ ,\quad [w_k]_\rig\in H^{2r}_{\rig,c}(|w_k|)\dual \ .
\] 
Note that the the  rigid homology groups $H_{2r}^{\rig}(|w_k|)$ are defined as the dual  of $H^{2r}_{\rig,c}(|w_k|)$ (\textit{cf.} \cite[\S2]{Pet:Cla03}).
We are going to prove that these cycle classes are compatible under (co)-specialization and that they induce a well defined syntomic cohomology class.
\begin{thr}[Cycle class compatibility]\label{thr:key}
	Let $\scrX$ be a flat $\scrV$-scheme of relative dimension $d$ and let $w \in z^q(\scrX/\scrV,0)$  be a relative cycle of codimension $q$ (and relative dimension $r=d-q$). Then $cosp([w_k]_\rig)=[w_K]_\dR$.
\end{thr}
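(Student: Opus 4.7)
My plan is to reduce the problem in three steps---first to a single integral flat $\scrV$-subscheme $\scrW$, then via de Jong's alteration to the case $\scrW$ smooth, and finally to an application of Poincar\'e duality together with the compatibilities of \cite{BalCaiFio:Poi04}. For the first step I would use the linearity of both cycle class maps and of $cosp$ in $w$, together with the functoriality of de Rham and rigid cohomology under the proper closed immersion $|w_?|\hookrightarrow\scrX_?$ (for $?=K,k$), to reduce to the case where $\scrX=\scrW$ is itself a single integral flat $\scrV$-scheme of relative dimension $r$. The multiplicities $m_{i,j}$ appearing in $w_k$ are then intrinsic to $\scrW_k$ and are precisely what the rigid fundamental class $[\scrW_k]_\rig$ packages via its very definition.

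For the second step I would invoke de Jong's alteration theorem to produce a proper, surjective, generically finite morphism $\pi\colon\widetilde{\scrW}\to\scrW$ of some degree $n\geq 1$ with $\widetilde{\scrW}$ smooth (or at worst strictly semistable) over $\scrV$, possibly after replacing $R$ by a finite extension of itself. Both the de Rham \cite{Har:On-75} and the rigid \cite{Pet:Cla03} fundamental classes are compatible with proper pushforward, and $cosp$ itself is functorial under proper maps by \cite[\S6]{BalCaiFio:Poi04}. Since $\pi_*[\widetilde{\scrW}_?] = n\cdot[\scrW_?]$ in both theories---the multiplicities $m_{i,j}$ on the special fiber being produced exactly by the pushforward of algebraic cycles---dividing by $n\in K^\times$ reduces the theorem to the case where $\scrW$ is smooth over $\scrV$.

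In this smooth case, Poincar\'e duality provides isomorphisms $H^{2r}_{\dR,c}(\scrW_K)\dual\cong H^0_\dR(\scrW_K)=K$ and $H^{2r}_{\rig,c}(\scrW_k)\dual\cong H^0_\rig(\scrW_k)=K$ (for $\scrW$ geometrically connected, the general case being additive over components), under which the two fundamental classes correspond to $1$. The central output of \cite{BalCaiFio:Poi04} is that these Poincar\'e dualities intertwine $cosp$ with the tautological specialization $H^0_\rig(\scrW_k)=K\to K=H^0_\dR(\scrW_K)$, which sends $1\mapsto 1$; this yields the desired equality $cosp([w_k]_\rig)=[w_K]_\dR$. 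The main obstacle in executing the plan is not conceptual but technical: one must check that $cosp$ commutes with the proper pushforwards of Step~2 and is compatible with Poincar\'e duality in Step~3, and both of these rest on the combined use of Raynaud's formal models \cite{BosLutRay:For95} and the tube and frame machinery developed in \cite{BalCaiFio:Poi04}.
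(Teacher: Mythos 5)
Your first and third steps coincide with the paper's: the reduction by linearity and functoriality to a single integral flat $\scrW$, and the terminal smooth case, which in the paper is exactly the commutative triangle of trace maps from the proof of Theorem~6.9 of \cite{BalCaiFio:Poi04}. The middle step is where you genuinely diverge, and it is also where the gaps are. The paper does not use alterations at all: it first removes the flat closure of the non-smooth locus of the generic fiber, using that top-degree compactly supported cohomology is unchanged by deleting a closed subset of smaller dimension; it then applies the Reduced Fibre Theorem of \cite{BosLutRay:For95} to make the geometric fibers reduced via a \emph{finite} morphism after a finite extension of $K$, invoking \cite[Proof of Prop.~6.4]{Pet:Cla03} for the invariance of the rigid class under finite morphisms; finally it deletes the singular locus of the (now reduced) special fiber, again harmless in top degree, to land in the smooth case.

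Two concrete problems with your version. First, de Jong's theorem for a scheme over a trait produces a \emph{strictly semistable} model after an alteration and a finite extension, not a smooth one: the special fiber of $\widetilde{\scrW}$ is in general a normal crossings configuration, so ``dividing by $n$ reduces to the case $\scrW$ smooth over $\scrV$'' is not what you actually obtain. You would still have to excise the singular locus of $\widetilde{\scrW}_k$ (harmless on $H^{2r}_{\rig,c}$) before quoting the smooth case --- that is, you cannot avoid the excision step on which the paper's route is built. Second, the identity $\pi_*[\widetilde{\scrW}_?]=n\,[\scrW_?]$ is doing a lot of unexamined work. On the special fiber it amounts to the compatibility of specialization of cycles with proper pushforward (so that the multiplicities $m_{i,j}=\length \O_{\scrW_k,W_{i,j}}$ come out right), and at the level of fundamental classes the reference \cite{Pet:Cla03} only proves compatibility for \emph{finite} morphisms; for a proper generically finite $\pi$ one must again restrict to a dense open over which $\pi$ is finite and use top-degree excision. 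None of this is fatal, but each point needs an argument, and once supplied your proof is strictly longer than the paper's, which gets by with the lighter Reduced Fibre Theorem precisely because only reducedness of the special fiber, not regularity of the total space, is needed.
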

\begin{proof}
	First of all consider the basic case: $w=\scrW$ is an integral closed sub-scheme of $\scrX$, smooth over $\scrV$.
	By \cite[Proof of theorem 6.9]{BalCaiFio:Poi04} we have a commutative diagram 
\begin{equation*}
\xymatrix{ 	H^{2r}_{\rig,c}(\scrW_k/K)\ar[rr]^{cosp}\ar[dr]_{[w_k]_\rig}& &\ar[ld]^{[w_K]_\dR} H^{2r}_{\dR,c}(\scrW_K/K)\\
	& K }
\end{equation*}
where $[w_k]_\rig$ (resp. $[w_K]_\dR$) is the rigid (resp. de Rham) trace map and it is an isomorphism of $K$-vector spaces. 

Given  a general relative cycle $w$  we can reduce to the basic case by arguing as follows.  First by linearity and the functoriality of the specialization map we can restrict to the case $w=\scrW$ with $\scrW$ integral. Then the generic fiber $\scrW_K$ is integral and  by \cite[Prop. 17.15.12]{EGA4.4} there exists a closed $K$-sub-scheme $T$ such that $\scrW_K\setminus T$ is smooth over $K$. Let $\mathscr{T}$ be the flat extension of $T$ (see Lemma~\ref{lmm:platification}), then $\mathscr{T}$ is of codimension $\ge 1$ in $\scrW$ and the complement $\scrW\setminus \mathscr{T}$ is a flat $\scrV$-scheme of relative dimension $r$. Consider the long exact sequence
\[
	\cdots H^{2r-1}_{\dR,c}(T)\to H^{2r}_{\dR,c}(\scrW_K\setminus\mathscr{T}_K)\to H^{2r}_{\dR,c}(\scrW_K)\to H^{2r}_{\dR,c}(T)\cdots .
\]
Note that here the first and last terms  vanish for dimensional reasons. The same happens (\textit{mutatis mutandis}) for the rigid cohomology of the special fiber. 

Hence from now on we can assume that $\scrW$ is integral and that its generic fiber $\scrW_K$ is smooth. In this setting we apply the Reduced Fibre Theorem for schemes \cite[Theorem 2.1']{BosLutRay:For95}: \textit{i.e.}, there exist a finite field extension $K'/K$  and a finite  morphism $f:\scrY \to \scrW\x_\scrV \scrV'$ of $\scrV'$-schemes such that
\begin{enumerate}[(i)]
	\item $f_{K'}:\scrY_{K'}\to \scrW_{K'}$ is an isomorphism;
	\item $\scrY/\scrV'$ is flat and has reduced geometric fibers.
\end{enumerate}

Recall that the co-specialization map commutes with finite field extension and the same holds for the rigid and the de Rham trace map. By \cite[Proof of Prop. 6.4]{Pet:Cla03}  the rigid fundamental/cycle class is preserved by finite morphisms, \textit{i.e.}, 
\[
	[\scrY_{k'}]_\rig\circ f_{k'}^*=[\scrW_{k'}]_\rig\ ,\quad f_{k'}^*:H^{2r}_{\rig,c}(\scrW_{k'})\to H^{2r}_{\rig,c}(\scrY_{k'})
\]
From the above discussion there  is no loss of generality in assuming  that $\scrW$ has reduced geometric fibers and smooth generic fiber $\scrW_K$. Now let $S$ be the singular locus of the special fiber $\scrW_k$. Again by \cite[Prop. 17.15.12]{EGA4.4} the complement $\scrW_k\setminus S$ is an open and dense sub-scheme of  $\scrW_k$ and it is smooth over $k$. The scheme $S$ has codimension $\ge 1$ in $\scrW_k$, hence $H^{2r}_{\rig,c}(\scrW_k\setminus S)\to H^{2r}_{\rig,c}(\scrW_k)$ is an isomorphism. From this follows that we can assume $\scrW$ to be smooth over $\scrV$ where we know that the claim is true.
\end{proof}

From now on assume $\scrX$ to be smooth over $\scrV$. By the  compatibility of (co)-specialization with Poincaré duality \cite[Theorem 6.9]{BalCaiFio:Poi04} the (de Rham or rigid) cycle class map defines an element $\eta_\rig(w_k)\in H^{2q}_{\rig,|w_k|}(\scrX_k/K)$ (resp. $\eta_\dR(w_K)\in H^{2q}_{\dR,|w_K|}(\scrX_K/K)$) compatible with respect to the specialization morphism 
\begin{equation}\label{eq:class compatibility}
	sp(\eta_\dR(w_K))=\eta_\rig(w_k) \ .
\end{equation}

Before stating the next corollary we need to introduce some further notation. Let $X$ be a smooth scheme over $K$ (resp. over $k$), then  the de Rham (resp. rigid) cycle class map factors throw the Chow groups inducing a map
	\[
		\eta_\dR:CH^q (X/K)\to H^{2q}_\dR(X) \qquad\ (\text{resp.}\	\eta_\rig:CH^q (X/k)\to H^{2q}_\rig(X/K))
	\] 
	where, by abuse of notation,   $\eta_\dR(W)$ (resp. $\eta_\rig(W)$) is  viewed as an element of  $H^{2q}_\dR(X)$  (resp. $H^{2q}_\rig(X/K)$) via the canonical map $H^{2q}_{\dR,W}(X)\to H^{2q}_{\dR}(X)$ (resp. $H^{2q}_{\rig,W}(X/K)\to H^{2q}_{\rig}(X/K)$), for any $W$ integral sub-scheme of codimension $q$ (See \cite[Prop.~7.8.1 p.60]{Har:On-75} and \cite[Cor.~7.6]{Pet:Cla03}). \\
Also we recall that in  \cite[Exp.~X Appendix]{SGA6} %SGA 6 appendix
is constructed a specialization map for the classical Chow ring
\[
	sp_{CH}:CH^*(\scrX_K/K)\ox\Q\rightarrow CH^*(\scrX_k/k)\ox\Q
\] 
Explicitly the map is given as follows. Let $W\subset \scrX_K$ be an integral scheme of codimension $q$ and denote by $\scrW$ its Zariski closure in $\scrX$. Then the specialization of $sp_{CH}[W]$ is the class representing the sub-scheme $\scrW_k$.
\begin{crl}\label{crl:cycle class + sp}
	With the above notation the following diagram commutes
	\begin{equation*}
	\xymatrix{
	CH^q (\scrX_K/K)\ox\Q\ar[d]_{sp_{CH}}\ar[r]^{\eta_\dR}& H^{2q}_\dR(\scrX_K)\ar[d]^{sp}\\
	CH^q(\scrX_k/k)\ox\Q\ar[r]_{\eta_\rig} & H^{2q}_\rig(\scrX_k/K) }
	\end{equation*}
	(we tensored each term by $\Q$ to guarantee the existence of $sp_{CH}$). 
\end{crl}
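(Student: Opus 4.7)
The plan is to deduce the corollary directly from Theorem~\ref{thr:key} together with its Poincaré-dual incarnation, the compatibility \eqref{eq:class compatibility}. The only real content of the corollary beyond that theorem is to match up Grothendieck's definition of $sp_{CH}$ with the formula for $w_k$ given just before the statement of Theorem~\ref{thr:key}. Since $sp_{CH}$, $sp$, $\eta_\dR$ and $\eta_\rig$ are all $\Q$-linear, I may restrict to testing commutativity on a generator $[W]\in CH^q(\scrX_K/K)\otimes\Q$, where $W\subset\scrX_K$ is an integral closed subscheme of codimension $q$.

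First I would produce a canonical lift of $W$ to a relative cycle on $\scrX$. Let $\scrW$ be the scheme-theoretic (Zariski) closure of $W$ in $\scrX$, taken with its reduced induced structure. Because $\scrX$ is flat over the discrete valuation ring $R$ and $\scrW$ is the closure of a subscheme of the generic fiber, $\scrW$ is automatically $\scrV$-flat (\emph{cf.}~Lemma~\ref{lmm:platification}). Hence $\scrW$ defines an element $w\in z^q(\scrX/\scrV,0)$ with $w_K=W$. The associated special fiber cycle, following the prescription in the paragraph preceding Theorem~\ref{thr:key}, is
\[
w_k=\sum_j m_j\,W_j,\qquad m_j=\length \O_{\scrW_k,W_j},
\]
where the $W_j$ are the irreducible components of $\scrW_k^{\rm red}$.

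Next I would invoke Grothendieck's explicit description of $sp_{CH}$ recalled just above the statement: $sp_{CH}[W]$ is represented precisely by the cycle $\scrW_k$, which unwinds to the same expression $\sum_j m_j [W_j]$ via the local multiplicities along the generic points of the components. Thus $sp_{CH}[W]=[w_k]$ as elements of $CH^q(\scrX_k/k)\otimes\Q$, and so $\eta_\rig(sp_{CH}[W])=\eta_\rig(w_k)$.

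Finally I would close the square using \eqref{eq:class compatibility}: since $\scrX$ is smooth over $\scrV$, Poincaré duality (\cite[Theorem~6.9]{BalCaiFio:Poi04}) converts the fundamental-class compatibility of Theorem~\ref{thr:key} into the identity $sp(\eta_\dR(w_K))=\eta_\rig(w_k)$ inside $H^{2q}_\rig(\scrX_k/K)$. Combining the two identifications,
\[
sp(\eta_\dR([W]))=sp(\eta_\dR(w_K))=\eta_\rig(w_k)=\eta_\rig(sp_{CH}[W]),
\]
which is the desired commutativity. The main substantive input is Theorem~\ref{thr:key}; no step of the corollary should pose a genuine obstacle, the only mild subtlety being the bookkeeping check that Grothendieck's multiplicities coincide with the lengths used to define $w_k$, which is immediate from the one-dimensional regular base.
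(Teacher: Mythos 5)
Your argument is correct and is precisely the one the paper leaves implicit behind its one-word proof (``Straightforward''): reduce to a generator $[W]$, lift it to the flat closure $\scrW$ via Lemma~\ref{lmm:platification}, match Grothendieck's description of $sp_{CH}[W]=[\scrW_k]$ with the cycle $w_k$ defined before Theorem~\ref{thr:key}, and conclude by the compatibility \eqref{eq:class compatibility}. No discrepancy to report.
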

\begin{proof}
	Straightforward.
\end{proof}
Let now $\scrX$ be a smooth $\scrV$-scheme.  Besser defined the (rigid) syntomic cohomology groups $H^n_\syn(\scrX,i)$ (\textit{cf.} \cite[Def~6.1]{Bes:Syn00}). We will be rather sketchy on the definitions because we will give another construction later.\\
For   such a cohomology   there is a long exact sequence
\begin{equation}\label{eq:les}
		\cdots\to H_\syn^n(\scrX,i)\to H^n_\rig(\scrX_k/K_0)\x F^iH^n_\dR(\scrX_K)\xrightarrow{h}H^n_\rig(\scrX_k/K_0)\x H^n_\rig(\scrX_k/K)\stackrel{+}{\rightarrow}\cdots
\end{equation}

where $h(x_0,x_\dR)=(\phi(x_0^\sigma)-p^ix_0,x_0\ox 1_K-sp(x_\dR))$. \\

Roughly these groups are defined as  $H_\syn^n(\scrX,i):=H^n(\R\Gamma_{{\rm Bes}}(\scrX,i))$, where
\[
	  \R\Gamma_{{\rm Bes}}(\scrX,i):=\Cone(\R\Gamma_\rig(\scrX/K_0)\oplus {Fil}^i\R\Gamma_\dR(\scrX)\to \R\Gamma_\rig(\scrX/K_0)\oplus \R\Gamma_\rig(\scrX/K))[-1]
\]
is a complex of abelian groups functorial in $\scrX$ and such that
\[
	H^n(\R\Gamma_\rig(\scrX/K_0))=H_\rig^n(\scrX_k/K_0)\ ,\quad H^n({Fil}^i\R\Gamma_\dR(\scrX))=H^n_\dR(\scrX_K)\ . 
\]
 The functoriality of $\R\Gamma_{{\rm Bes}}(-,i)$  allows us to give the following definition.
\begin{dfn}
	Let $\scrX$ be a smooth $\scrV$-scheme. Let $\scrZ\subset \scrX$ be a closed sub-scheme of $\scrX$. We define the syntomic complex with support in $\scrZ$ using the complexes defined by Besser  in the following way 
	\[
		\R\Gamma_{{\rm Bes},\scrZ}(\scrX,i):=\Cone(\R\Gamma_{\rm Bes}(\scrX,i)\to \R\Gamma_{\rm Bes}(\scrX\setminus \scrZ,i))[-1]
	\]
This is a complex of abelian groups functorial with respect to cartesian squares. This fact will be used in the proof of Prop.~\ref{prp:regsyn}.\\
The cohomology of this complex is  the \emph{syntomic cohomology with support} in $\scrZ$, denoted by 
	\[
		H^n_{\syn, \scrZ}(\scrX,i):=H^n(\R\Gamma_{{\rm Bes},\scrZ}(\scrX,i))
	\]
	so that we get a long exact sequence
	\[
		\cdots H^n_{\syn, \scrZ}(\scrX,i)\to H^n_{\syn}(\scrX,i)\to H^n_{\syn}(\scrX\setminus  \scrZ,i)\to \cdots\ .
	\]
\end{dfn}
\begin{prp}[Syntomic cycle class]\label{prp:syncyc}
	Let $\scrX$ be a smooth $\scrV$-scheme  let  $w\in z^q(\scrX/\scrV,0)$ be a relative cycle of codimension $q$.
	Let $\psi:H^{2q}_{\syn,|w|}(\scrX,q)\to H^{2q}_{\rig,|w_k|}(\scrX_k/K_0)\x F^{q} H^{2q}_{\dR,|w_K|}(\scrX_K/K)$ be the canonical map.
	 Then $\psi$ is injective and there exists a unique element $\eta_\syn(w)\in   H^{2q}_{\syn,|w|}(\scrX,q)$ such that  $\psi(\eta_\syn(w))=(\eta_\rig(w_k),\eta_\dR(w_K))$.
\end{prp}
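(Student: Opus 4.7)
The plan is to exploit the long exact sequence of syntomic cohomology with support (inherited directly from the cone defining $R\Gamma_{{\rm Bes},\scrZ}$) together with cohomological purity on the smooth fibers $\scrX_K$ and $\scrX_k$.

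First I would introduce the support versions $R\Gamma_{\rig,\scrZ_k}(\scrX_k/K_0)$, $R\Gamma_{\rig,\scrZ_k}(\scrX_k/K)$ and $R\Gamma_{\dR,\scrZ_K}(\scrX_K)$ by taking mapping cones (shifted by $-1$) of the restriction maps to the open complement $\scrX\setminus\scrZ$. By the naturality of Besser's construction $R\Gamma_{{\rm Bes},\scrZ}(\scrX,q)$ is then quasi-isomorphic to
\[
\Cone\bigl(R\Gamma_{\rig,\scrZ_k}(\scrX_k/K_0)\oplus F^q R\Gamma_{\dR,\scrZ_K}(\scrX_K)\xrightarrow{h} R\Gamma_{\rig,\scrZ_k}(\scrX_k/K_0)\oplus R\Gamma_{\rig,\scrZ_k}(\scrX_k/K)\bigr)[-1],
\]
yielding a long exact sequence analogous to \eqref{eq:les} with supports. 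Since $\scrZ=|w|$ has codimension $q$ and the ambient schemes $\scrX_K$, $\scrX_k$ are smooth, cohomological purity — obtained by stratifying $\scrZ_K$ (resp.\ $\scrZ_k$) into smooth locally closed pieces of codimension $\geq q$ and applying Gysin purity for de Rham (Hartshorne) and rigid cohomology (Berthelot, Petrequin) via excision — gives $H^n_{\dR,\scrZ_K}(\scrX_K)=0$ and $H^n_{\rig,\scrZ_k}(\scrX_k/K_0)=H^n_{\rig,\scrZ_k}(\scrX_k/K)=0$ for all $n<2q$.

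Feeding these vanishings into the long exact sequence at degree $2q$ yields exactness of
\[
0\to H^{2q}_{\syn,|w|}(\scrX,q)\xrightarrow{\psi} H^{2q}_{\rig,|w_k|}(\scrX_k/K_0)\oplus F^q H^{2q}_{\dR,|w_K|}(\scrX_K)\xrightarrow{h} H^{2q}_{\rig,|w_k|}(\scrX_k/K_0)\oplus H^{2q}_{\rig,|w_k|}(\scrX_k/K),
\]
which proves injectivity of $\psi$ and reduces existence and uniqueness of $\eta_\syn(w)$ to checking that the pair $(\eta_\rig(w_k),\eta_\dR(w_K))$ lies in $\Ker(h)$. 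The second component of $h$ vanishes on this pair exactly by equation \eqref{eq:class compatibility} from Theorem \ref{thr:key}: $sp(\eta_\dR(w_K))=\eta_\rig(w_k)\otimes 1_K$. The first component vanishes by the Frobenius equivariance $\phi(\eta_\rig(w_k)^\sigma)=p^q\,\eta_\rig(w_k)$, which is the standard compatibility of the rigid cycle class in codimension $q$ (see \cite[\S7]{Pet:Cla03}); note also that the de Rham cycle class automatically lands in the Hodge filtration piece $F^q$, so the pair is indeed an element of the stated product.

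The main obstacle I anticipate is the set-up of the support version of Besser's cone complex together with the purity/vanishing statement for rigid cohomology with support in the possibly singular cycle $|w_k|$; this is handled by excision and stratification but requires care to match Besser's conventions (Godement-type resolutions, choice of overconvergent models) with the cited purity references, and in particular to ensure the Frobenius and filtration structures carry over to the support complexes compatibly.
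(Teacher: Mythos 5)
Your proposal is correct and follows essentially the same route as the paper: the long exact sequence for syntomic cohomology with support, weak purity to kill the degree-$(2q-1)$ term (whence injectivity of $\psi$ and the identification of its image with $\Ker(h)$), and the verification that $(\eta_\rig(w_k),\eta_\dR(w_K))$ lies in $\Ker(h)$ via Theorem~\ref{thr:key} together with the fact that Frobenius acts on the rigid cycle class by $p^q$. The only cosmetic differences are that the paper needs purity only for the rigid groups (the relevant degree-$(2q-1)$ term involves no de Rham cohomology, so your de Rham purity step is superfluous) and that it justifies membership in $F^q$ by noting $F^{q}H^{2q}_{\dR,|w_K|}(\scrX_K/K)=H^{2q}_{\dR,|w_K|}(\scrX_K/K)$ by Hodge theory.
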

\begin{proof}
	By the definition of syntomic cohomology with support there is a long exact sequence similar to \eqref{eq:les}
\[
		\cdots\to  H_{\rig,|w_k|}^{2q-1}(\scrX_k/K_0)\oplus H_{\rig,|w_k|}^{2q-1}(\scrX_k/K)\to  H^{2q}_{\syn,|w|}(X,q)\to 
	\]
			\[ H^{2q}_{\rig,|w_k|}(\scrX_k/K_0)\x F^{q} H^{2q}_{\dR,|w_K|}(\scrX_K/K)\to H_{\rig,|w_k|}^{2q}(\scrX_k/K_0)\oplus H_{\rig,|w_k|}^{2q}(\scrX_k/K)\to \cdots  
	\]
	The last term on the left vanishes because of weak purity in rigid cohomology  \cite[Cor.~5.7]{Ber:Fin97}. 
	It follows that $H^{2q}_{\syn,|w|}(\scrX,q)$ consists of the pairs $(x,y)\in H^{2q}_{\rig,|w_k|}(\scrX_k/K_0)\x F^{q} H^{2q}_{\dR,|w_K|}(\scrX_K/K)$ such that $\phi(x^\sigma)=p^qx$ and $sp(y)=x\ox 1_K$. By Hodge theory $F^{q} H^{2q}_{\dR,|w_K|}(\scrX_K/K)= H^{2q}_{\dR,|w_K|}(\scrX_K/K)$. Moreover the Frobenius acts on $[w_k]_\rig$ by multiplication for $p^q$ (see \cite[Prop. 7.13]{Pet:Cla03}). Hence we can easily conclude the proof by \eqref{eq:class compatibility}.
\end{proof}
\begin{lmm}[Functoriality]\label{lmm:functoriality}
	Let $f:\scrX'\to\scrX$ be a closed immersion of smooth $\scrV$-schemes  and let  $w\in z^q(\scrX/\scrV,0)$ be a relative cycle of codimension $q$. Assume that the pre-image $f\inv w$ lies in $z^q(\scrX'/\scrV,0)$, then  $f^*\eta_\syn(w)=\eta_\syn(f\inv w)$.
\end{lmm}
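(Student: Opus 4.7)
The plan is to reduce the statement to the corresponding functoriality for the rigid and de Rham cycle classes, using the injectivity of $\psi$ established in Proposition~\ref{prp:syncyc}.

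First I would note that since $f:\scrX'\to\scrX$ is a morphism of smooth $\scrV$-schemes and we are assuming $f^{-1}w\in z^q(\scrX'/\scrV,0)$, there is a cartesian square relating the pairs $(\scrX', |f^{-1}w|)$ and $(\scrX, |w|)$, so by functoriality of Besser's complex $\R\Gamma_{{\rm Bes},-}(-,q)$ with respect to such squares, the pullback $f^*:H^{2q}_{\syn,|w|}(\scrX,q)\to H^{2q}_{\syn,|f^{-1}w|}(\scrX',q)$ is defined and is compatible with the natural maps $\psi$ on both sides. Concretely one obtains a commutative square
\begin{equation*}
\xymatrix{
H^{2q}_{\syn,|w|}(\scrX,q)\ar[r]^-{\psi}\ar[d]_{f^*} & H^{2q}_{\rig,|w_k|}(\scrX_k/K_0)\x F^{q} H^{2q}_{\dR,|w_K|}(\scrX_K/K)\ar[d]^{f_k^*\x f_K^*}\\
H^{2q}_{\syn,|f^{-1}w|}(\scrX',q)\ar[r]_-{\psi'} & H^{2q}_{\rig,|(f^{-1}w)_k|}(\scrX'_k/K_0)\x F^{q} H^{2q}_{\dR,|(f^{-1}w)_K|}(\scrX'_K/K)\ .
}
\end{equation*}

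Since $\psi'$ is injective by Proposition~\ref{prp:syncyc}, it suffices to show that $\psi'(f^*\eta_\syn(w))=\psi'(\eta_\syn(f^{-1}w))$. By the characterization of $\eta_\syn$ and the commutativity of the above square, this amounts to the two identities
\[
f_k^*\eta_\rig(w_k)=\eta_\rig((f^{-1}w)_k)\ ,\qquad f_K^*\eta_\dR(w_K)=\eta_\dR((f^{-1}w)_K)\ .
\]
Both are standard functoriality statements for the rigid and the de Rham cycle class with support under a closed immersion of smooth schemes over a field, proved in \cite[Cor.~7.6 and \S7]{Pet:Cla03} for the rigid case and in \cite[Prop.~7.8.1]{Har:On-75} for the de Rham case; the hypothesis that $f^{-1}w\in z^q(\scrX'/\scrV,0)$ guarantees that the codimensions on both fibers behave correctly so that these pullback formulas apply, because then $f_K^{-1}w_K=(f^{-1}w)_K$ and $f_k^{-1}w_k=(f^{-1}w)_k$ as cycles (matching multiplicities via the flatness hypothesis built into the definition of $z^q(\scrX'/\scrV,0)$).

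The only genuinely delicate point is the last parenthetical remark: one has to verify that the flatness condition in the definition of $z^q(\scrX'/\scrV,0)$ forces the scheme-theoretic pullback of $w$ under $f_K$ (resp.\ $f_k$) to coincide, as a cycle with multiplicities, with the generic (resp.\ special) fiber of $f^{-1}w$. This is where the flatness of the components of $f^{-1}w$ over $\scrV$ and the proper intersection implicit in the codimension hypothesis enter; granting this, the reduction to the field-theoretic functoriality of $\eta_\rig$ and $\eta_\dR$ is immediate and the proof concludes.
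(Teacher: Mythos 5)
Your overall strategy is the same as the paper's: use the injectivity of $\psi$ from Prop.~\ref{prp:syncyc} (and the compatibility of Besser's complexes with the cartesian square defined by $f$ and the supports) to reduce the syntomic identity to the corresponding identities for the rigid and de~Rham classes with supports. Up to that point your argument is fine. The paper then makes one further reduction that you skip: since $sp(\eta_\dR(w_K))=\eta_\rig(w_k)\ox 1_K$ by \eqref{eq:class compatibility} and $sp$ commutes with $f^*$, the rigid identity is a formal consequence of the de~Rham one, so only $f_K^*\eta_\dR(w_K)=\eta_\dR((f\inv w)_K)$ needs to be proved. This is not just a cosmetic saving — it means one does not need an independent reference for the rigid functoriality with supports.

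The genuine weak point of your write-up is the final step, where you declare both identities ``standard'' and cite \cite[Cor.~7.6]{Pet:Cla03} and \cite[Prop.~7.8.1]{Har:On-75}. Those results concern the cycle class map into ordinary cohomology (equivalently, its factorization through Chow groups), not the class in cohomology \emph{with supports} pulled back along a closed immersion meeting an arbitrary (possibly singular) integral cycle properly; that stronger statement is exactly what is needed here and is not off-the-shelf. The paper's proof has to work for it: it first reduces by excision (removing the singular loci of $\scrW_K$ and of $f\inv\scrW_K$ from $\scrX_K$ and $\scrX_K'$, which does not change the relevant cohomology groups with supports) to the case where both cycles are smooth over $K$, and then adapts the argument of \cite[Prop.~7.1]{Pet:Cla03} to the de~Rham setting. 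You also flag the matching of multiplicities between $f_K\inv w_K$ and $(f\inv w)_K$ as ``the only genuinely delicate point'' and then grant it; together with the inadequate citations this leaves the core of the lemma unproved. To repair the argument, insert the reduction to the de~Rham identity via $sp$, then the excision reduction to smooth cycles, and then either carry out or properly attribute the Petrequin-style computation.
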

\begin{proof}
	It is not restrictive to assume that  $w=\scrW$ is an integral sub-scheme of $\scrX$ flat over $\scrV$.  
	
	We first show that is sufficient to prove that $f^*\eta_\dR(w_K)=\eta_\dR(f\inv w_K)$. In fact the syntomic cycle class can be viewed as an element $(\eta_\rig(w_k),\eta_\dR(w_K))$ in the direct sum of rigid and de Rham cohomology. Then note that  $sp(\eta_\dR(w_K))=\eta_\rig(w_k)\ox 1_K$ and that the specialization map is functorial. 
	
	To prove that $f^*\eta_\dR(\scrW_K)=\eta_\dR(f\inv \scrW_K)$ we first  reduce by excision to the case where  $\scrW_K$ is smooth over $K$ (just remove from $\scrW_K, \scrX_K$ and $\scrX_K'$  the singular points of $\scrW_K$).   In the same way we can further assume  $f\inv \scrW_K$ to be smooth over $K$. Now we can follow the same proof of \cite[Prop.~7.1]{Pet:Cla03} to conclude.
\end{proof}
\begin{lmm}[Homotopy]\label{lmm:hi}
The (rigid) syntomic cohomology is homotopy invariant, \textit{i.e.} \[
	H^n_\syn(\scrX\x \A^1_\scrV,q)\iso H^n_\syn(\scrX,q)\ .
\] 
\end{lmm}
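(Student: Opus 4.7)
The plan is to deduce the statement from the homotopy invariance of the constituent cohomology theories appearing in the long exact sequence \eqref{eq:les}. Let $\pi\colon \scrX\times \A^1_\scrV\to \scrX$ denote the projection; by the functoriality of $\R\Gamma_{\rm Bes}$ established by Besser, $\pi$ induces a morphism of long exact sequences
\[
\xymatrix@C=10pt{
\cdots\ar[r] & H^n_\syn(\scrX,q)\ar[r]\ar[d]^{\pi^*} & H^n_\rig(\scrX_k/K_0)\x F^qH^n_\dR(\scrX_K)\ar[r]\ar[d]^{\pi^*} & H^n_\rig(\scrX_k/K_0)\x H^n_\rig(\scrX_k/K)\ar[r]\ar[d]^{\pi^*} & \cdots \\
\cdots\ar[r] & H^n_\syn(\scrX\x\A^1_\scrV,q)\ar[r] & H^n_\rig(\scrX_k\x\A^1_k/K_0)\x F^qH^n_\dR(\scrX_K\x\A^1_K)\ar[r] & H^n_\rig(\scrX_k\x\A^1_k/K_0)\x H^n_\rig(\scrX_k\x\A^1_k/K)\ar[r] & \cdots
}
\]
By the five lemma it suffices to prove that the pullback $\pi^*$ is an isomorphism on each of the two outer columns.

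For the rigid terms, both over $K_0$ and over $K$, one invokes the classical homotopy invariance of rigid cohomology (\cite{Ber:Fin97} and its extensions), which gives $H^n_\rig(\scrX_k\x\A^1_k/K_0)\iso H^n_\rig(\scrX_k/K_0)$ via $\pi^*$, and similarly with $K$ coefficients. For the filtered de Rham term one needs to show that $\pi^*\colon F^qH^n_\dR(\scrX_K)\to F^qH^n_\dR(\scrX_K\x\A^1_K)$ is an isomorphism. I would choose a smooth compactification $\bar{\scrX}_K$ of $\scrX_K$ with SNC boundary $D$, so that by Deligne the Hodge filtration on $H^n_\dR(\scrX_K)$ is realized by the stupid filtration on $\Omega^\bullet_{\bar{\scrX}_K}(\log D)$; then $\bar{\scrX}_K\x\P^1_K$ with boundary $D\x\P^1_K\cup \bar{\scrX}_K\x\{\infty\}$ provides a good compactification of $\scrX_K\x\A^1_K$. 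A filtered Künneth argument combined with the computation $F^0H^0_\dR(\A^1_K)=K$, $F^iH^0_\dR(\A^1_K)=0$ for $i\geq 1$, and $H^j_\dR(\A^1_K)=0$ for $j\geq 1$ then yields the required filtered isomorphism.

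The naturality of the boundary maps follows from functoriality of the cone construction and of the specialization morphism $sp$, so the morphism of long exact sequences is genuine. The main obstacle is the filtered de Rham step: one must check that the isomorphism $H^n_\dR(\scrX_K\x\A^1_K)\iso H^n_\dR(\scrX_K)$ is strictly compatible with the Hodge filtration, which requires tracking the filtration through the Künneth decomposition on the level of logarithmic de Rham complexes on the chosen compactification. Once this is established, the five lemma immediately gives $\pi^*\colon H^n_\syn(\scrX,q)\iso H^n_\syn(\scrX\x\A^1_\scrV,q)$.
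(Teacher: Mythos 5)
Your argument is correct and is essentially the paper's own proof: compare the long exact sequences \eqref{eq:les} for $\scrX$ and $\scrX\times\A^1_\scrV$ and apply the five lemma together with homotopy invariance of de Rham cohomology (\cite[Prop.~7.9.1]{Har:On-75}) and of rigid cohomology (via the K\"unneth formula of \cite{Ber:Dua97}). You are more careful than the paper about the filtered term $F^qH^n_\dR$; note that this step can also be settled without a filtered K\"unneth computation, since $\pi^*$ is a morphism of mixed Hodge structures and is therefore strictly compatible with the Hodge filtration, so an isomorphism on underlying vector spaces is automatically a filtered isomorphism.
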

\begin{proof}
	Just consider the long exact  sequences of syntomic cohomology and note that the de Rham cohomology (of smooth schemes)  is  homotopy invariant by \cite[Prop.~7.9.1]{Har:On-75}. The same holds for rigid cohomology, for instance using the  Künneth formula \cite{Ber:Dua97}.
\end{proof}
\begin{lmm}[Weak purity]
	Let $\scrX$ be a smooth $\scrV$-scheme. Let $\scrZ\subset \scrX$ be a closed sub-scheme of $\scrX$ of codimension $q$. Then 
	\[
		H^n_{\syn, \scrZ}(\scrX,i)=0\quad \text{for all} \ n< 2q\ .
	\]	
\end{lmm}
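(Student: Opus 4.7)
The plan is to derive a long exact sequence for $H^*_{\syn,\scrZ}(\scrX,i)$ entirely analogous to \eqref{eq:les}, and then invoke weak purity in each of the classical cohomology theories appearing in it.

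First, starting from $\R\Gamma_{{\rm Bes},\scrZ}(\scrX,i)=\Cone(\R\Gamma_{\rm Bes}(\scrX,i)\to \R\Gamma_{\rm Bes}(\scrX\setminus\scrZ,i))[-1]$ together with the defining cone presentation of $\R\Gamma_{\rm Bes}(-,i)$, two applications of the octahedral axiom to the commutative square comparing $\scrX$ with $\scrX\setminus\scrZ$ in each of the three constituent theories produce a quasi-isomorphism
\begin{multline*}
\R\Gamma_{{\rm Bes},\scrZ}(\scrX,i)\simeq \Cone\bigl(\R\Gamma_{\rig,|\scrZ_k|}(\scrX_k/K_0)\oplus Fil^i\R\Gamma_{\dR,|\scrZ_K|}(\scrX_K)\\
\longrightarrow \R\Gamma_{\rig,|\scrZ_k|}(\scrX_k/K_0)\oplus \R\Gamma_{\rig,|\scrZ_k|}(\scrX_k/K)\bigr)[-1],
\end{multline*}
where each with-support complex is itself the shifted mapping cone of restriction to the open complement. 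Taking cohomology yields, exactly as in the proof of Prop.~\ref{prp:syncyc}, a long exact sequence
\begin{multline*}
\cdots\to H^{n-1}_{\rig,|\scrZ_k|}(\scrX_k/K_0)\oplus H^{n-1}_{\rig,|\scrZ_k|}(\scrX_k/K)\to H^n_{\syn,\scrZ}(\scrX,i)\\
\to H^n_{\rig,|\scrZ_k|}(\scrX_k/K_0)\x F^i H^n_{\dR,|\scrZ_K|}(\scrX_K/K)\to H^n_{\rig,|\scrZ_k|}(\scrX_k/K_0)\oplus H^n_{\rig,|\scrZ_k|}(\scrX_k/K)\to \cdots.
\end{multline*}

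Next I would apply weak purity on each flank. For rigid cohomology, \cite[Cor.~5.7]{Ber:Fin97} gives $H^j_{\rig,|\scrZ_k|}(\scrX_k/K_0)=H^j_{\rig,|\scrZ_k|}(\scrX_k/K)=0$ for every $j<2q$. For the de Rham side on the smooth $K$-scheme $\scrX_K$, the analogous vanishing $H^j_{\dR,|\scrZ_K|}(\scrX_K/K)=0$ for $j<2q$ is classical: it follows from the local-to-global spectral sequence for the de Rham complex together with the Grothendieck vanishing $\underline{H}^i_{|\scrZ_K|}(\Omega^\bullet_{\scrX_K/K})=0$ for $i<q$, itself a consequence of the Cohen--Macaulayness of $\Omega^\bullet_{\scrX_K/K}$ on the regular ambient scheme $\scrX_K$. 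Then $F^iH^j_{\dR,|\scrZ_K|}(\scrX_K/K)$ vanishes a fortiori.

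Since $n<2q$ forces $n-1<2q$ as well, all four groups flanking $H^n_{\syn,\scrZ}(\scrX,i)$ in the long exact sequence vanish, and so $H^n_{\syn,\scrZ}(\scrX,i)=0$. The only step that requires genuine care is the double-cone/octahedral bookkeeping identifying the rigid and de Rham with-support complexes inside $\R\Gamma_{{\rm Bes},\scrZ}(\scrX,i)$; once that is set up, the purity inputs are off the shelf.
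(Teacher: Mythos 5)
Your overall route is exactly the paper's: the paper's proof is a two-line appeal to the long exact sequence of syntomic cohomology with support (the same one you assemble from the double mapping cone, and which it already wrote out in the proof of Prop.~\ref{prp:syncyc}) together with weak purity in rigid and de Rham cohomology, citing \cite[\S1]{Pet:Cla03} and \cite[\S7.2]{Har:On-75} respectively. So the structure and the rigid-side input (\cite[Cor.~5.7]{Ber:Fin97}) are fine and match the paper.

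The one step in your write-up that does not hold as stated is the inline justification of de Rham weak purity. The argument ``local-to-global spectral sequence plus $\underline{H}^i_{|\scrZ_K|}(\Omega^\bullet_{\scrX_K/K})=0$ for $i<q$ by Cohen--Macaulayness'' only yields $H^j_{\dR,|\scrZ_K|}(\scrX_K)=0$ for $j<q$: depth considerations on the locally free sheaves $\Omega^p$ give vanishing of local cohomology below the codimension $q$, and the spectral sequence $H^p(\scrX_K,\underline{H}^i_{|\scrZ_K|})\Rightarrow H^{p+i}_{\dR,|\scrZ_K|}$ then kills nothing in the range $q\le j<2q$, which is precisely the range you need. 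The full bound $j<2q$ is Hartshorne's de Rham purity theorem \cite[Ch.~II, \S7.2]{Har:On-75}; its proof requires more than Cohen--Macaulayness (a local Gysin/Koszul computation for smooth $\scrZ_K$ and a stratification-plus-excision reduction in general, or comparison with topological purity over $\C$). Since the needed statement is a standard theorem and is exactly what the paper cites, the lemma is not in danger, but you should replace your sketch with the citation rather than present it as a consequence of depth alone.
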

\begin{proof}
	This follows directly from the long exact sequence of syntomic cohomology and the weak purity in de Rham and rigid cohomology (\cite[\S7.2]{Har:On-75} and \cite[\S1]{Pet:Cla03}). 
\end{proof}
\begin{prp}\label{prp:regsyn}
	Let $\scrX$ be a smooth $\scrV$-scheme. The syntomic cycle class map induces a group homomorphism $\reg_\syn:CH^{i}(\scrX/\scrV,2i-n)\to H_\syn^{n}(\scrX,i)$.
\end{prp}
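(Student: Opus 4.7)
My plan is to follow Bloch's original construction of the regulator \cite{Blo:Alg86h}, using as input the syntomic cycle class with support (Proposition~\ref{prp:syncyc}), its functoriality under pullback (Lemma~\ref{lmm:functoriality}), homotopy invariance (Lemma~\ref{lmm:hi}) and weak purity. Combining Proposition~\ref{prp:syncyc} and Lemma~\ref{lmm:functoriality}, for each $m\ge 0$ the assignment $w\mapsto \eta_\syn(w)$ defines a homomorphism
\[
  \eta_\syn: z^i(\scrX/\scrV, m) \longrightarrow \varinjlim_{W} H^{2i}_{\syn, W}(\scrX \times \Delta^m, i),
\]
where $W$ runs over the codimension-$i$ closed subsets of $\scrX\times\Delta^m$ meeting all faces properly. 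Lemma~\ref{lmm:functoriality} guarantees that this map commutes with every face-pullback $\partial_j^*$, and hence with the Bloch differential $\sum_j(-1)^j\partial_j^*$: the collection $\{\eta_\syn\}_m$ is a morphism of simplicial abelian groups into the simplicial object $A_\bullet$ obtained on the right.

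The next step is to upgrade this cohomology-level datum to a morphism in the derived category landing in $\R\Gamma_{\rm Bes}(\scrX,i)$. For this I would consider the bicomplex
\[
  B^{p,q} := \varinjlim_{W} \R\Gamma_{{\rm Bes}, W}(\scrX \times \Delta^p, i)^q,
\]
with horizontal differential the alternating sum of face pullbacks and vertical differential that of Besser's complex. By weak purity, $H^q(B^{p,\bullet})=0$ for $q<2i$, so each $\eta_\syn(w)$ admits a canonical chain-level representative as a $2i$-cocycle (unique up to canonical coboundary), and Lemma~\ref{lmm:functoriality} ensures that these representatives assemble into a morphism of complexes $\Phi:\Gamma^*(i)\to\mathrm{Tot}(B^{\bullet,\bullet})$ with the appropriate shift identifying the simplicial degree $2i-n$ with cohomological degree $n$ on the source. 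Composing with the forgetful map to the bicomplex without supports and applying Lemma~\ref{lmm:hi} (each inclusion of a vertex $\scrX\hookrightarrow \scrX\times\Delta^p$ is a quasi-isomorphism on syntomic cohomology) identifies the target, in the derived category, with $\R\Gamma_{\rm Bes}(\scrX,i)$. Taking $n$-th cohomology yields the required
\[
  \reg_\syn: CH^i(\scrX/\scrV, 2i-n) \longrightarrow H^n_\syn(\scrX, i).
\]

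The main obstacle is the passage from the cohomology-level, simplicially compatible family of cycle classes $\{\eta_\syn(w)\}$ to an actual morphism of complexes: \emph{a priori} the syntomic cycle class is defined only up to canonical equivalence in cohomology with support, so to get a strict chain map one needs sufficiently functorial chain representatives of Besser's complex with support. This is precisely the role of the Godement-type resolutions developed later in Section~\ref{sec:gdm} (built on the enough-points result of \cite{PutSch:Poi95}); alternatively, one can argue entirely in the derived category, using weak purity to show that the relevant truncations have no higher $\Ext$'s to obstruct the lift. Either way, once this technical step is in place, the argument is formal and follows Bloch's original pattern verbatim.
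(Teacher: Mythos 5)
Your proposal follows the same route as the paper: Bloch's argument applied to the double complex of Besser's complexes with support over $\Delta^\bullet_\scrX$, using the syntomic cycle class of Prop.~\ref{prp:syncyc}, Lemma~\ref{lmm:functoriality} for compatibility with the face maps, homotopy invariance, and weak purity. The one point worth noting is that the paper resolves the step you flag as the ``main obstacle'' not by choosing chain-level cocycle representatives but by running the spectral sequence of the (suitably truncated) double complex: the cycle class gives a map of complexes $\Gamma^\bullet_{\scrX/\scrV}(q)\to E_{1,\supp}^{\bullet-2q,2q}$ defined purely on cohomology groups with support, and weak purity (vanishing of $E_{r,\supp}^{n,m}$ for $m<2q$) yields edge maps $E_{2,\supp}^{i-2q,2q}\to E_{\infty,\supp}^{i-2q,2q}\to H^{i}(sA_{\supp}^{\bullet,*})$, so no strict chain map is ever needed.
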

\begin{proof}
	The construction is analogous  to that provided in \cite{Blo:Alg86h}. Consider the cohomological double complex $\R\Gamma_{\rm Bes} (\Delta_\scrX^{-n},q)^m$, non-zero for $m\ge 0$, $n\le 0$; the differential in $n$ is induced by $\partial_i^n$ in the usual way. Similarly define the double complex 
	\[
		\R\Gamma_{{\rm Bes},\supp} (\Delta_\scrX^{-n},q)^m:=\colim_{w\in z^q(\scrX,-n)} \R\Gamma_{{\rm Bes},|w|} (\Delta_\scrX^{-n},q)^m\ .
	\]
	For technical reasons we truncate these complex (non-trivially) 
	\[
		A_{?}^{n,m}=\tau_{n\ge -N}\R\Gamma_{{\rm Bes},?} (\Delta_\scrX^{-n},q)^m \qquad ?=\vuoto,\ \supp
	\]
	 for $N$ even and $N>>0$. \\
	Consider the spectral sequence
	\[
		E_1^{n,m}:=H^m(A^{\bullet,n})\Rightarrow H^{n+m}(s A^{\bullet,*})
	\]
	where $s$ denotes the associated simple complex of a double complex. By homotopy invariance (Lemma~\ref{lmm:hi}) $E_1^{n,m}:=H_\syn^m(\Delta^{-n}_\scrX,q)$ is isomorphic to $H_\syn^m(\scrX,q)$ for $-N\le n\le 0$ and $m\ge 0$;  otherwise it is zero. Moreover $d_1^{n,m}=0$ except for $n$ even, $-N\le n<0$ and $m\ge 0$ in which case $d_1^{n,m}=\id$. This gives an isomorphism $H^i(sA^{n,m})\iso H_\syn^i(\scrX,q)$.\\
	In the spectral sequence 
	\[
		E_{1,\supp}^{n,m}:=H^m(A_{\supp}^{\bullet,n})\Rightarrow H^{n+m}(s A_{\supp}^{\bullet,*})
	\]
	we have
	\[
		E_{1,\supp}^{n,m}=\colim_{w\in z^q(X,-n)}H^m_{\syn,|w|}(\Delta_{\scrX}^{-n},q)\quad\text{for}\ -N\le n\le 0\ ,m\ge 0,
	\]
	and $E_{1,\supp}^{n,m}=0$ otherwise.  Applying lemma~\ref{lmm:functoriality} to the face morphisms it is easy to prove that the syntomic cycle class induces a natural map of complexes $\Gamma_{\scrX/\scrV}^\bullet(q)\to E_{1,\supp}^{\bullet-2q,2q}$ and hence for all $i$ a map 
	$CH^q(\scrX/\scrV,2q-i)\to E_{2,\supp}^{i-2q,2q}$. Due to weak purity the groups $E_{r,\supp}^{n,m}$ are zero for $m<2q$ and $r\ge 1$. Hence there are natural maps $E_{2,\supp}^{i-2q,2q}\to E_{\infty,\supp}^{i-2q,2q}\to H^{i}(s A_{\supp}^{\bullet,*})$. By construction there is map $H^{i}(s A_{\supp}^{\bullet,*})\to H^{i}(s A^{\bullet,*})$. Composing all these maps we obtain the expected map $CH^q(\scrX,2q-i)\to H^{i}_\syn (\scrX,q)$.
\end{proof}
\begin{crl}
	With the above notation there is a commutative diagram
	\begin{equation*}
	\xymatrix{
	CH^q(\scrX/\scrV)\ox\Q\ar[d]_{\reg_\syn} \ar[r]^{\gamma\ox 1_\Q} & CH^q(\scrX_K/K)\ox\Q\ar[d]_{\eta_\dR}\ar[r]^{sp_{CH}}& CH^q(\scrX_k/k)\ox\Q\ar[d]_{\eta_\rig}\\
	H^{2q}_\syn(\scrX,q)\ar[r]_\pi &F^qH^{2q}_\dR(\scrX_K)\ar[r]_{sp}& H^{2q}_\rig(\scrX_k/K)  
	}
	\end{equation*}
	where $\pi$ is the composition $H_\syn^n(\scrX,i)\to H^n_\rig(\scrX_k/K_0)\x F^iH^n_\dR(\scrX_K)\xrightarrow{{\rm pr}_2}F^iH^n_\dR(\scrX_K)$; $\gamma$ is the map described in the remark~\ref{rmk:gamma}.
\end{crl}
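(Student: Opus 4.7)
The diagram breaks into two squares, which I would handle separately.

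The right-hand square, relating $\eta_\dR$ and $\eta_\rig$ via $sp_{CH}$ and $sp$, is precisely Corollary~\ref{crl:cycle class + sp}, so no additional argument is needed there.

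For the left-hand square the plan is to unwind the spectral-sequence construction of $\reg_\syn$ from the proof of Proposition~\ref{prp:regsyn}, restricted to the bottom degree $n=2q$, where an element of $CH^q(\scrX/\scrV)=CH^q(\scrX/\scrV,0)$ is represented by a single cycle $w\in z^q(\scrX/\scrV,0)$. At the corner $(n,m)=(0,2q)$ of the double complex the cosimplicial variable sits at $\Delta^0_\scrX=\scrX$, so the map $\Gamma^\bullet_{\scrX/\scrV}(q)\to E_{1,\supp}^{\bullet-2q,2q}$ built from the syntomic cycle class sends $w$ to $\eta_\syn(w)\in H^{2q}_{\syn,|w|}(\scrX,q)$. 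Following the edge maps $E^{0,2q}_{2,\supp}\to E^{0,2q}_{\infty,\supp}\to H^{2q}(sA^{\bullet,*}_{\supp})\to H^{2q}(sA^{\bullet,*})\iso H^{2q}_\syn(\scrX,q)$, one finds that $\reg_\syn([w])$ is exactly the image of $\eta_\syn(w)$ under the canonical forgetful map $H^{2q}_{\syn,|w|}(\scrX,q)\to H^{2q}_\syn(\scrX,q)$.

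Given this identification, commutativity of the left square is immediate: by the defining property in Proposition~\ref{prp:syncyc} we have $\psi(\eta_\syn(w))=(\eta_\rig(w_k),\eta_\dR(w_K))$, and since $\pi$ is by construction the projection of $\psi$ onto the de Rham component, $\pi(\reg_\syn([w]))=\eta_\dR(w_K)$; on the other hand, by Remark~\ref{rmk:gamma}(i), $(\gamma\otimes 1_\Q)([w])=[w_K]$ in $CH^q(\scrX_K/K)\otimes\Q$, so $\eta_\dR((\gamma\otimes 1_\Q)([w]))=\eta_\dR(w_K)$ as well.

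The only genuinely non-formal step is the spectral-sequence unwinding identifying $\reg_\syn([w])$ with the image of $\eta_\syn(w)$; everything else is bookkeeping. The key point to check carefully is that on the $n=0$ column the truncation and the edge maps collapse to the functorial forget-the-support map, which follows from the fact that weak purity kills $E^{n,m}_{r,\supp}$ for $m<2q$ and that $d_1^{0,m}=0$ for $n=0$.
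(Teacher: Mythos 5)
Your proof is correct and takes essentially the same approach as the paper: the paper's entire argument is the one-line observation that in this degree $\reg_\syn$ is the map induced by the syntomic cycle class ``in the usual way,'' which is exactly the identification you establish by unwinding the spectral sequence at the corner $(0,2q)$. The remaining steps---invoking Proposition~\ref{prp:syncyc} for the left square and Corollary~\ref{crl:cycle class + sp} for the right square---are the intended bookkeeping.
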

\begin{proof}
	Just note that in this case $\reg_\syn$ is the map induced by the syntomic cycle class in the usual way.
\end{proof}
% section regulator_draft_ (end)
%
%
%
%

%
%
\section{$p$-adic Hodge complexes}
Having defined a regulator map with values into the syntomic cohomology is tempting to check (some of) the Bloch-Ogus axioms for this theory. We address this problem by viewing the syntomic cohomology as an absolute cohomology theory.

Thus in this section we define a triangulated category of \emph{$p$-adic Hodge complexes} similar to  that of \cite{Ban:Syn02}.  See also \cite{Beu:Not86}, \cite{Hub:Mix95} and \cite[Ch. V \S2.3]{Lev:Mix98}.

The syntomic cohomology will be computed by Hom groups in this category.
\begin{dfn}[\textit{cf.} {\cite[2.1]{Ban:Syn02}}]
	Let $C_{\rig}^b(K)$ be the category of pairs $(M^\bullet,\phi)$, where
	\begin{enumerate}[(i)]
		\item $M^\bullet=M_0^\bullet\ox_{K_0}K$ where $M_0^\bullet$ is a complex in $C^b(K_0)$;
		\item (Frobenius structure) Let  $(M_0^\bullet)^\sigma:= M_0^\bullet\ox_\sigma K_0$. Then  $\phi:(M_0^\bullet)^\sigma\to M_0^\bullet$ is a $K_0$-linear morphism.
	\end{enumerate}

	The morphisms in this category are morphisms in $C^b(K_0)$ compatible with respect to the Frobenius structure. In this way we get an abelian category.
\end{dfn}
\begin{dfn}
	Let ${\rm Filt}_K$ be the category of $K$-vector spaces with a descending, exhaustive and separated filtration. We denote  $C_{\dR}^b(K)=C^b({\rm Filt}_K)$ and we write the objects of this category as pairs  $(M^\bullet,F)$, where
	\begin{enumerate}[(i)]
		\item $M^\bullet$ is a complex in $C^b(K)$;
		\item (Hodge filtration) $F$ is a (separated and exhaustive) filtration on $M^\bullet$.
	\end{enumerate} 
\end{dfn}
\begin{rmk}[Strictness]\label{rmk:strictness}
	We review some technical facts about filtered categories. For a full discussion see \cite[\S~2 and 3]{Hub:Mix95}.\\
	The category ${\rm Filt}_K$ (and also $C_{\dR}^b(K)$) is additive but not abelian. It is an exact category by taking for  short exact sequences that which are exact as sequences of $K$-vector spaces and such that the morphisms are strict w.r.t. the filtrations: recall that a morphism $f:(M,F)\to (N,F)$ is \emph{strict} if $f(F^iM)=F^iN\cap \Im(f)$. 
	
	An object $(M^\bullet, F)\in C^b_\dR(K)$ is a \emph{strict complex} if its differentials are strict as morphism of filtered vector spaces. Strict complexes can be characterized also by the fact that the canonical spectral sequence $H^q(F^p)\Rightarrow H^{p+q}(M^\bullet)$ degenerates at $E_1$.
	
	One can define canonical truncation functors on $C^b_\dR(K)$: for $M^\bullet\in C_{\dR}^b(K)$ let
	\[
		\tau_{\le n}(M^\bullet,F)^i:=\begin{cases}
			M^i & i<n\\
			\Ker(d^n)& i=n\\
			0& i>n
		\end{cases} \qquad 
		\tau_{\ge n}(M^\bullet,F)^i:=\begin{cases}
			 0 & i<n-1\\
			\Coim(d^n)& i=n-1\\
			M^i& i\ge n
		\end{cases}\ .
	\]
	It is important to note that for a strict complex $(M^\bullet,F)$ the naive cohomology object $\tau_{\le n}\tau_{\ge n}(M^\bullet ,F)$ agrees with the cohomology $H^n(M^\bullet)$ of the complex of $K$-vector spaces underlying $(M^\bullet,F)$ (see \cite[Prop.~2.1.3 and \S3]{Hub:Mix95}).
\end{rmk}
\begin{rmk}[Comparison/gluing functors]  
	 There is an exact functor $\Phi_\rig:C_{\rig}^b(K)\to C^b(K)$ (resp. $\Phi_\dR:C_{\dR}^b(K)\to C^b(K) $) induced by $(M^\bullet,\phi)\mapsto M^\bullet$ (resp. $(M^\bullet,F)\mapsto M^\bullet$).
\end{rmk}
\begin{dfn}[\textit{cf.} {\cite[Def.~2.2]{Ban:Syn02}}]\label{def:pHC}
 We define the  category $pHC$ of \emph{$p$-adic Hodge complexes} whose objects are systems $M=(M_\rig^\bullet,M_\dR^\bullet,M_K^\bullet,c,s)$, where
	\begin{enumerate}[(i)]
		\item $(M_\rig^\bullet,\phi)$ is an object of $C_{\rig}^b(K)$ and $H^*(M_\rig^\bullet)$ is finitely generated over $K$;
		\item $(M_\dR^\bullet,F)$ is an object of $C_{\dR}^b(K)$ and $H^*(M_\dR^\bullet)$ is finitely generated over $K$;
		\item $M_K^\bullet$ is an object of $C^b(K)$ and $c:M_\rig^\bullet\to M_K^\bullet$ (resp.  $s:M_\dR^\bullet\to M_K^\bullet$) is morphism in $C^b(K)$. Hence $c,s$  give a diagram in $C^b_K$
	\[
		M_\rig^\bullet \xrightarrow{c} M_K^\bullet \xleftarrow{s} M_\dR^\bullet\qquad .
	\]
	\end{enumerate}

	A morphism in $pHC$ is given by a system $f=(f_\rig,f_\dR,f_K)$ where $f_?:M_?^\bullet\to N_?^\bullet$ is a morphism in $C^b_\rig(K)$, $C^b_\dR(K)$, $C^b(K)$, for $?=\rig,\dR,K$ respectively, and such that they are compatible with respect to to the diagram in (iii) above. 
\end{dfn}
A \emph{homotopy} in $pHC$ is a system of homotopies $h_i$ compatible with the comparison maps $c,s$. We define the category $pHK$ to be the category $pHC$ modulo morphisms homotopic to zero. We say that a morphism $f=(f_\rig,f_\dR,f_K)$ in $pHC$ (or $pHK$) is a \emph{quasi-isomorphism} if:  $f_?$ is a quasi-isomorphism for $?=\rig,K$; $f_\dR$ is a filtered quasi-isomorphism, \textit{i.e.} $\gr_F(f_\dR)$ is a quasi-isomomorphism. Finally we say that $M\in pHC$ (or $pHK$) is \emph{acyclic} if $M_?=0$ is acyclic for any $?=\rig,\dR,K$.
\begin{lmm}\label{lmm:tstructure}
	\begin{enumerate}[(i)]
		\item The category $pHK$ is a triangulated category.
		\item The localization of $pHK$ with respect to the class of quasi-isomorphisms exists. This category, denoted by $pHD$, is a triangulated category.
		\item On the category $pHD$ it is possible to define a non-degenerate t-structure  (resp. truncation functors) compatible with the standard t-structure (resp. truncation functors)  defined on $C^b(K), C^b_\rig(K)$ and $C^b_\dR(K)$.
	\end{enumerate}
\end{lmm}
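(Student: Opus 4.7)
The plan is to reduce every claim to the corresponding statement in the three component homotopy categories built from $C^b(K_0)$, $C^b(K)$ and $C^b_\dR(K)$, glued by the comparison maps $c,s$. All constructions will be performed componentwise, with the comparison data transported by the same functorial operation applied on each side; compatibility with $c$ and $s$ is then automatic.

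For (i), I would define $M[1]$ componentwise and, for $f=(f_\rig,f_\dR,f_K)$, set $\Cone(f):=(\Cone(f_\rig),\Cone(f_\dR),\Cone(f_K))$ with comparison maps $\Cone(c)$ and $\Cone(s)$ induced termwise; a distinguished triangle in $pHK$ is one isomorphic to a standard $M\to N\to\Cone(f)\to M[1]$. The axioms (TR1)--(TR4) follow by applying the corresponding verification in each component and observing that the gluing data is preserved by the chosen functorial constructions. The triangulated structure on the filtered homotopy category is due to \cite{Hub:Mix95}. For (ii), one checks that the quasi-isomorphisms of Def.~\ref{def:pHC} (underlying qi in the $\rig$ and $K$ slots, filtered qi in the $\dR$ slot) form a saturated multiplicative system compatible with the triangulation: a morphism is a quasi-isomorphism iff its cone is acyclic, so the three component systems of quasi-isomorphisms (each already known to be a multiplicative system in its respective homotopy category) assemble into a multiplicative system on $pHK$. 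Verdier's standard localization then yields $pHD$ as a triangulated category.

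For (iii), I would set
\[
pHD^{\le n}:=\{M\in pHD : H^i(M_?)=0 \text{ for all } i>n,\ ?=\rig,\dR,K\},
\]
and $pHD^{\ge n}$ symmetrically, and define the truncations componentwise, using in the $\dR$ slot the filtered truncations $\tau_{\le n},\tau_{\ge n}$ of Remark~\ref{rmk:strictness} and the ordinary canonical truncations in the $\rig$ and $K$ slots, transported along $c$ and $s$ by functoriality. The existence of the truncation triangle $\tau_{\le n}M\to M\to\tau_{\ge n+1}M\to$, the orthogonality $\Hom_{pHD}(\tau_{\le n}M,\tau_{\ge n+1}N)=0$, and non-degeneracy all reduce to the componentwise statements, the latter using boundedness. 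Compatibility with the standard t-structures on $C^b(K),C^b_\rig(K),C^b_\dR(K)$ is then by construction, since the three forgetful functors $\Phi_\rig,\Phi_\dR$ and the one recording $M_K$ intertwine each of these truncations with the standard ones.

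The main obstacle will be the filtered slot, which is the reason for Remark~\ref{rmk:strictness}. The formulas recalled there compute the derived truncations only on \emph{strict} complexes: for a general filtered complex the naive $\tau_{\le n}$ applied to the underlying complex need not carry a well-defined filtration in the filtered derived category. The key technical step is therefore to show that every object of $pHD$ admits a quasi-isomorphic representative whose $\dR$-component is strict, so that the componentwise truncations are defined at the level of objects before localization. This is available via the standard décalage / filtered resolution procedure for filtered complexes used in \cite[\S3]{Hub:Mix95}; once it is in place the componentwise truncations descend to functors on $pHD$ and assemble into the desired t-structure.
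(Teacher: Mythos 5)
Parts (i) and (ii) of your argument are fine and are exactly the componentwise/glued-category route the paper takes by citing \cite[Prop.~2.6]{Ban:Syn02} and \cite[\S\S2,4]{Hub:Mix95}: cones and shifts are formed slotwise with the comparison maps $\Cone(c)$, $\Cone(s)$, and the quasi-isomorphisms form a multiplicative system compatible with the triangulation because a map is a quasi-isomorphism iff its cone is acyclic (with acyclicity in the $\dR$ slot meant in the filtered, i.e.\ $\gr_F$, sense).

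The gap is in (iii), and it sits precisely in the step you single out as the key one: the claim that every object of $pHD$ admits a quasi-isomorphic representative whose $\dR$-component is strict. This is false. For biregular filtrations a filtered quasi-isomorphism is in particular a quasi-isomorphism of underlying complexes, so both $H^*(\gr_F M_\dR)$ and $H^*(M_\dR)$ are invariants of the class of $M_\dR$ in the filtered derived category; since strictness is equivalent to the degeneration at $E_1$ of $H^q(F^p)\Rightarrow H^{p+q}(M_\dR^\bullet)$, i.e.\ to the numerical coincidence of these two invariants, an object admits a strict representative only if its spectral sequence already degenerates. Concretely, take $M_\dR=[K\xrightarrow{\;\id\;}K]$ in degrees $0,1$ with $F^1M^0=0$ and $F^1M^1=K$, $F^2M^1=0$ (and $M_\rig=M_K=0$): the underlying complex is acyclic while $\gr_F$ has two-dimensional total cohomology, so no strict complex can be filtered quasi-isomorphic to it; d\'ecalage shifts the filtration but cannot remove this obstruction. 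The same example shows that your proposed aisle is wrong in the $\dR$ slot: you should test $H^i(\gr_F M_\dR)$, not $H^i(M_\dR)$, since this object lies in your $pHD^{\le -1}$ yet is a nonzero object of the filtered derived category concentrated in degrees $0,1$ for the t-structure whose truncations are those of Remark~\ref{rmk:strictness}. The repair is to drop the strictness reduction altogether: the truncation functors $\tau_{\le n}$, $\tau_{\ge n}$ of Remark~\ref{rmk:strictness} are defined by $\Ker(d)$ and $\Coim(d)$ with induced filtrations on \emph{arbitrary} objects of $C^b_\dR(K)$, and the actual content of \cite[\S\S2--3]{Hub:Mix95} and \cite[Prop.~2.6]{Ban:Syn02} is that these preserve filtered quasi-isomorphisms and yield the truncation triangles and orthogonality on the localized category. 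Strictness only enters afterwards, to identify the t-cohomology with the naive cohomology of the underlying complex for the geometric complexes $R\Gamma_\dR(\scrX)$, which are strict.
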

\begin{proof}
	The proof is the same as \cite[Prop.~2.6]{Ban:Syn02}. See also \cite[\S2]{Hub:Mix95} for a survey on how to derive  exact categories.
\end{proof}
%
%\begin{dfn}
%	We define the category of \emph{$p$-adic Hodge complexes} $pHD$ to be the full subcategory of $pHD'$ of complexes $M$ such that there is a representative $N$ of $M$ in $pHD'$ whose differentials are strict with respect to the Hodge filtration.
%\end{dfn}
%
\begin{rmk}
In the terminology of Huber \cite[\S4]{Hub:Mix95} the category $pHC$ is the \emph{glued exact category} of $C_\rig^b,C_\dR^b$ via $C_K^b$. The foregoing definition is inspired by \cite{Ban:Syn02} where Bannai constructs a \emph{rigid} glued exact category $C^b_{MF}$, \textit{i.e.}, the comparison maps are all quasi-isomorphisms. For our purposes we cannot impose this strong assumption. This is motivated by the fact that the (co)-specialization is not an isomorphism for a general smooth $\scrV$-scheme.
\end{rmk}
\begin{prg}[Notation]\label{prg:gammafunctor}
	Let $M^\bullet,{M'}^\bullet$ be two objects of $pHC$. Consider the following diagram ${\cal H}(M^\bullet,{M'}^\bullet)$ (of complexes of abelian groups)
	\begin{equation*}
	\xymatrix@C-50pt@R=19pt{
	\Hom^\bullet_{K_0}((M_0^\bullet)^\sigma,{M_0'}^\bullet)& &\Hom_K^\bullet(M_\rig^\bullet,{M_K'}^\bullet)& &\Hom_K^\bullet(M_\dR^\bullet,{M_K'}^\bullet)& \\
	 &\ar[lu]_{h_0}\Hom^\bullet_{K_0}(M_0^\bullet,{M_0'}^\bullet)\ar[ur]^{h_1}& &  \ar[lu]_{h_2}\Hom^\bullet_{K}(M_K^\bullet,{M_K'}^\bullet)\ar[ur]^{h_3} & & 
	\ar[lu]_{h_4}\Hom^{\bullet,F}_{K}(M_\dR^\bullet,{M_\dR'}^\bullet) 
	}
	\end{equation*}
	where  $h_0(x_0)=x_0\circ\phi-\phi'\circ x_0^\sigma$, $h_1(x_0)=c'\circ (x_0\ox\id_K)$, $h_2(x_K)=x_K \circ c$, $h_3(x_K)=x_K\circ s$, $h_4(x_\dR)=s'\circ x_\dR$.   Then define the complexes of abelian groups  $\Gamma_0(M^\bullet,{M'}^\bullet):=$direct sum of the bottom row, $\Gamma_1(M^\bullet,{M'}^\bullet):=$direct sum of the top row. Consider the morphism 
	\[
		\psi_{M^\bullet,{M'}^\bullet}:\Gamma_0(M^\bullet,{M'}^\bullet)\to \Gamma_1(M^\bullet,{M'}^\bullet)\quad (x_0,x_K,x_\dR)\mapsto (-h_0(x_0),h_1(x_0)-h_2(x_K),h_3(x_K)-h_4(x_\dR))
	\]
	Finally let $\Gamma(M^\bullet,{M'}^\bullet):= \Cone(\psi_{M^\bullet,{M'}^\bullet})[-1]$.
\end{prg}
\begin{rmk}\label{rmk:tatetwist}
	\begin{enumerate}[(i)]
		\item Let $\K(-n)$ be the Tate twist p-adic Hodge complex: $\K(-n)_\rig$ (resp. $\K(-n)_\dR, \K(-n)$) is equal to $K$ concentrated in degree zero; the Frobenius is $\phi(\lambda):=p^n\sigma(\lambda)$; the filtration is $F^i=K$ for $i\le n$ and zero otherwise.  
		\item Given two $p$-adic Hodge complexes $M^\bullet$ and ${M'}^\bullet$ we define their tensor product $M^\bullet\ox {M'}^\bullet$ component-wise, \textit{i.e.}, $(M_\rig^\bullet\ox {M_\rig'}^\bullet,M_\dR^\bullet\ox {M_\dR'}^\bullet,M_K^\bullet\ox {M_K'}^\bullet,c\ox c', s\ox s')$. We denote by $M^\bullet(n)$ the complex $M^\bullet\ox \K(n)$.
		\item The complex $\Gamma(\K,M^\bullet(n))$ is quasi-isomorphic to
		\[
			\Cone(M_0^\bullet\oplus F^n M_\dR^\bullet \xrightarrow{\eta} M_0^\bullet\oplus M_K^\bullet)[-1]\quad \eta(x_0,x_\dR)=(p^{-n}\phi(x_0^\sigma)-x_0,c(x_0\ox \id_K)-s(x_\dR))
		\]
		where $x_0\in M_0^\bullet,\ x_\dR\in F^nM_\dR^\bullet$.

		If $c$ is a quasi-isomorphism, letting $sp$ denote the composition of
		\[
			H^q(F^nM_\dR^\bullet) \stackrel{s^*}{\rightarrow} H^q(M_K^\bullet) \xleftarrow[\iso]{c^*} H^q(M_\rig^\bullet)\ ,
		\]
		we obtain a long exact sequence
		\[
		\cdots \to 	H^q(\Gamma(\K,M^\bullet(n)))\to H^q(M_0^\bullet)\oplus H^q(F^nM_\dR^\bullet)\stackrel{\eta'}{\rightarrow} H^q(M_0^\bullet)\oplus H^q(M_\rig^\bullet)\to \cdots,
		\]
		where $\eta'(x_0,x_\dR)=(p^{-n}\phi(x_0^\sigma)-x_0,x_0\ox 1_K- sp(x_\dR))$.

		If $s$ is a quasi-isomorphism, letting $cosp$ denote the composition of
		\[
			H^q(M_\rig^\bullet) \stackrel{c^*}{\rightarrow} H^q(M_K^\bullet) \xleftarrow[\iso]{s^*} H^q(M_\dR^\bullet)\ ,
		\]
		we obtain a long exact sequence
		\[
		\cdots \to 	H^q(\Gamma(\K,M^\bullet(n)))\to H^q(M_0^\bullet)\oplus H^q(F^nM_\dR^\bullet)\stackrel{\eta''}{\rightarrow} H^q(M_0^\bullet)\oplus H^q(M_\dR^\bullet)\to \cdots,
		\]
		where $\eta''(x_0,x_\dR)=(p^{-n}\phi(x_0^\sigma)-x_0,cosp(x_0\ox 1_K)- x_\dR)$.
	\end{enumerate}
\end{rmk}
\begin{prp}[Ext-formula]\label{prp:extformula}
	With the above notation there is a canonical morphism of abelian groups
	\[
		\Hom_{pHD}(M^\bullet,{M'}^\bullet[n])\iso H^n(\Gamma(M^\bullet,{M'}^\bullet))\ .
	\]
	In particular if $M^\bullet=M,{M'}^\bullet=M'$ are concentrated in degree $0$, $H^n(\Gamma(M,M'))=0$ for $n\ge 2$ and $n<0$.
\end{prp}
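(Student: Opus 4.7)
The plan is to follow closely the argument in \cite{Ban:Syn02}; the key observation is that the definition of $\Gamma(M^\bullet,{M'}^\bullet)=\Cone(\psi_{M^\bullet,{M'}^\bullet})[-1]$ does not invoke the quasi-isomorphism hypothesis on the comparison maps $c,s$ that distinguishes Bannai's category from ours, so the formula is expected to carry over essentially verbatim.

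First I would unpack the cohomology of $\Gamma$ via the tautological distinguished triangle
\[
\Gamma(M^\bullet,{M'}^\bullet)\to \Gamma_0(M^\bullet,{M'}^\bullet)\xrightarrow{\psi}\Gamma_1(M^\bullet,{M'}^\bullet)\xrightarrow{+1}
\]
and identify the cohomology of $\Gamma_0$ and $\Gamma_1$ with the usual derived Hom groups: for complexes $A^\bullet,B^\bullet$ one has
\[
H^n\Hom^\bullet_K(A^\bullet,B^\bullet)=\Hom_{D^b(K)}(A^\bullet,B^\bullet[n]),
\]
and the analogous formula for $\Hom^{\bullet,F}_K$ in the filtered derived category built from ${\rm Filt}_K$, which relies on the strictness discussion of Remark~\ref{rmk:strictness} and \cite[\S2--3]{Hub:Mix95}. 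The corresponding statement over $K_0$, with or without a Frobenius twist, is analogous. This yields a long exact sequence involving $H^n(\Gamma(M^\bullet,{M'}^\bullet))$ and derived Hom groups in the three component categories, with connecting maps induced by $h_0,\dots,h_4$.

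On the other side, by the construction of $pHD$ as a \emph{glued} triangulated category in the sense of Huber \cite[\S4]{Hub:Mix95}, $\Hom_{pHD}(M^\bullet,{M'}^\bullet[n])$ itself sits in a Mayer--Vietoris-type long exact sequence involving exactly the same Hom groups in the three component derived categories, with the same connecting maps. A direct inspection in degree zero identifies a cocycle in $\Gamma^0(M^\bullet,{M'}^\bullet)=\Gamma_1^{-1}\oplus \Gamma_0^0$ with a triple $(f_\rig,f_K,f_\dR)$ of chain maps together with homotopies witnessing compatibility with $\phi$, $c$ and $s$; coboundaries correspond precisely to systems of homotopies in $pHC$. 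After replacing ${M'}^\bullet$ by an appropriate resolution (for instance, one obtained through the Godement machinery developed in the subsequent section), every morphism in $pHD$ is represented by such an up-to-homotopy compatible triple, and the five lemma applied to the two long exact sequences yields the isomorphism $\Hom_{pHD}(M^\bullet,{M'}^\bullet[n])\iso H^n(\Gamma(M^\bullet,{M'}^\bullet))$.

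For the vanishing statement, when $M$ and $M'$ are both concentrated in degree zero every $\Hom^\bullet$-complex appearing in ${\cal H}(M,M')$ is itself concentrated in degree zero; hence $\Gamma_0(M,M')$ and $\Gamma_1(M,M')$ both sit in degree zero, so $\Cone(\psi_{M,M'})[-1]$ is concentrated in degrees $0$ and $1$ only, and $H^n(\Gamma(M,M'))=0$ for all $n\ge 2$ and $n<0$ follows at once. The main obstacle will be the degree-zero identification of $H^0(\Gamma)$ with $\Hom_{pHD}$: a priori a cocycle only gives a morphism in $pHK$ (where the compatibilities with $\phi,c,s$ hold up to homotopy), and one must verify that inverting quasi-isomorphisms introduces no further relations. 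This is where Huber's glued-category formalism is essential, and where the absence of the quasi-isomorphism hypothesis on $c,s$ demands a small additional argument compared to \cite{Ban:Syn02} --- but only in bookkeeping, not in the logical structure of the proof.
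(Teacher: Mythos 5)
Your diagnosis of where the difficulty lies is right (a cocycle of $\Gamma$ is a triple of chain maps compatible with $\phi,c,s$ only \emph{up to specified homotopies}, whereas a morphism of $pHC$ is strictly compatible), but the mechanism you propose to close that gap does not work as stated. The pivotal step is your assertion that, ``by the construction of $pHD$ as a glued triangulated category,'' the group $\Hom_{pHD}(M^\bullet,{M'}^\bullet[n])$ sits in a Mayer--Vietoris long exact sequence with the same terms and connecting maps as the one coming from the triangle $\Gamma\to\Gamma_0\xrightarrow{\psi}\Gamma_1$. That sequence is not available a priori: $pHD$ is defined as a localization of the homotopy category $pHK$ of the glued exact category, and producing such a long exact sequence for its Hom groups is essentially equivalent to the Ext-formula itself --- it is exactly what Beilinson and Huber prove, and they prove it via the cone construction, so invoking it here is circular. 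Moreover, the five lemma requires a morphism \emph{between} the two long exact sequences, i.e.\ the canonical map of the statement, whose construction is part of what has to be done; and ``replacing ${M'}^\bullet$ by a resolution obtained through the Godement machinery'' is not meaningful at this point, since ${M'}^\bullet$ is an abstract $p$-adic Hodge complex rather than a complex of sheaves.

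The paper's route avoids all of this. By the octahedron axiom one has a triangle $\Ker \psi\to\Gamma(M^\bullet,{M'}^\bullet)\to\Coker \psi[-1]\xrightarrow{+}$; one checks directly that $H^n(\Ker \psi_{M^\bullet,{M'}^\bullet})=\Hom_{pHK}(M^\bullet,{M'}^\bullet[n])$ (cocycles of $\Ker\psi$ are the strictly compatible triples and coboundaries are the homotopies of $pHC$); one writes $\Hom_{pHD}(M^\bullet,{M'}^\bullet[n])=\colim_I\Hom_{pHK}(M^\bullet,{M''}^\bullet[n])$ over quasi-isomorphisms ${M'}^\bullet\to{M''}^\bullet$, uses that $H^n(\Gamma(M^\bullet,-))$ is invariant under such quasi-isomorphisms, and quotes \cite[1.7,1.8]{Beu:Not86} (or \cite[Lemma 4.2.8]{Hub:Mix95}, \cite[Lemma 2.15]{Ban:Syn02}) for the vanishing of $\colim_I H^n(\Coker \psi_{M^\bullet,{M''}^\bullet}[-1])$, observing that the quasi-isomorphism hypothesis on the gluing maps is not needed in that argument. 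The genuinely nontrivial input is precisely this last vanishing statement, which your outline never supplies. By contrast, your reduction of $H^n\Hom^\bullet$ to derived Homs over a field and your degree count proving $H^n(\Gamma(M,M'))=0$ for $n\ge 2$ and $n<0$ are correct.
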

\begin{proof}
	By the octahedron axiom we have the following triangle in $D^b(Ab)$
	\[
		\Ker \psi_{M^\bullet,{M'}^\bullet}\to \Gamma(M^\bullet,{M'}^\bullet)\to \Coker \psi_{M^\bullet,{M'}^\bullet}[-1]\xrightarrow{+}\ .
	\] 
	Its cohomological long exact sequence is 
	\[
		\xrightarrow{\partial}H^n(\Ker \psi_{M^\bullet,M'})\to H^n(\Gamma(M,{M'}^\bullet))\to H^n(\Coker \psi_{M^\bullet,{M'}^\bullet}[-1])\xrightarrow{\partial}
	\]
	Note that by construction $H^n(\Ker \psi_{M^\bullet,{M'}^\bullet})=\Hom_{pHK}(M^\bullet,{M'}^\bullet[n])$. Let $I$ be the family of quasi-isomorphisms $g:{M'}^\bullet\to {M''}^\bullet$, then $\Hom_{pHD}(M^\bullet,{M'}^\bullet[n])=\colim_I\Hom_{pHK}(M^\bullet,{M''}^\bullet[n])$. Thus the result is proved if we show that: 
	
	i) $H^n(\Gamma(M^\bullet,{M'}^\bullet))\iso H^n(\Gamma(M^\bullet,{M''}^\bullet))$ for any $g:{M'}^\bullet\to {M''}^\bullet$ quasi-isomorphism.
	
	ii) $\colim_I H^n(\Coker \psi_{M^\bullet,{M''}^\bullet}[-1])=0$.
	
	The first claim follows from the exactness of $\Gamma(M^\bullet,-)$ and the second is proved  in \cite[1.7,1.8]{Beu:Not86} (and with more details in \cite[Lemma 4.2.8]{Hub:Mix95} or \cite[Lemma 2.15]{Ban:Syn02}) with the assumption that all the gluing maps are quasi-isomorphisms, but this hypothesis is not necessary. 
\end{proof}
\begin{lmm}[Tensor Product]\label{lmm:diag tensor}	Let $M^\bullet, {M'}^\bullet, I^\bullet$ be $p$-adic Hodge complexes. For any $\alpha \in K$ there is a morphism of complexes
\[
	\cup_\alpha:\Gamma (I^\bullet,M^\bullet)\ox \Gamma (I^\bullet,{M'}^\bullet)\to \Gamma (I^\bullet,M^\bullet\ox {M'}^\bullet)
\]
such that they are all  equivalent up to homotopy. 
\end{lmm}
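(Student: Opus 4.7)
The approach is to imitate Beilinson's cup product on absolute Hodge (and Deligne) cohomology: for a cone of the shape $\Cone(\psi)[-1]$, one interpolates between two equally natural chain-level products by a weight $\alpha\in K$, producing a whole one-parameter family of chain maps, the free parameter being an artifact of choosing a section of the cone, with any two linked by an explicit homotopy proportional to their difference. I would carry out the analogue of this construction for $\Gamma(I,M)\ox \Gamma(I,M')\to \Gamma(I,M\ox M')$.

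Concretely, write $\Gamma(I^\bullet,M^\bullet)^n = \Gamma_0(I,M)^n\oplus \Gamma_1(I,M)^{n-1}$ and denote a cochain by $(x,y)$ with $x=(x_0,x_K,x_\dR)$ and $y=(y_0,y_K,y_\dR)$, analogously $(x',y')\in \Gamma(I,M')^m$. Set
\[
(x,y)\cup_\alpha (x',y') := \bigl(x\ox x',\;\alpha\,(x\diamond y') + (-1)^n(1-\alpha)\,(y\diamond x')\bigr),
\]
where $x\ox x':=(x_0\ox x'_0,\,x_K\ox x'_K,\,x_\dR\ox x'_\dR)\in \Gamma_0(I,M\ox M')^{n+m}$ is the componentwise tensor product of maps (via the monoidal structure on $C^b_\rig,C^b(K),C^b_\dR$), and the two "cross-products" $x\diamond y',\,y\diamond x'\in \Gamma_1(I,M\ox M')^{n+m-1}$ are assembled componentwise from $x,y',y,x'$ by plugging them into the comparison maps $c,c',s,s'$ and the Frobenii $\phi,\phi'$ following the pattern of $h_0,\dots,h_4$ in \S\ref{prg:gammafunctor}.

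Checking that $\cup_\alpha$ is a chain map amounts to expanding $d((x,y)\cup_\alpha(x',y'))$ and matching it to $(d(x,y))\cup_\alpha(x',y') + (-1)^n(x,y)\cup_\alpha(d(x',y'))$. The $\Gamma_0$-summand is the ordinary Leibniz rule for the tensor product of chain maps. On the $\Gamma_1$-summand, the terms involving $\psi_{I,M}(x)$ and $\psi_{I,M'}(x')$ must recombine into $\psi_{I,M\ox M'}(x\ox x')$; a direct computation using the explicit formulas for $h_0,\dots,h_4$ shows this happens precisely when the two coefficients sum to $1$, which is the role of $\alpha+(1-\alpha)=1$ and is where the one-parameter family arises naturally.

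For the homotopy statement, given $\alpha,\beta\in K$ I would define the degree $-1$ map
\[
H_{\alpha,\beta}\colon \Gamma(I,M)\ox \Gamma(I,M')\to \Gamma(I,M\ox M'),\qquad (x,y)\ox(x',y')\mapsto \bigl(0,\;(\alpha-\beta)\,(y\diamond y')\bigr),
\]
with $y\diamond y'\in \Gamma_1(I,M\ox M')^{n+m-2}$ a further componentwise pairing, and verify by direct expansion that $\cup_\alpha-\cup_\beta = d\,H_{\alpha,\beta} + H_{\alpha,\beta}\,d^{\ox}$. The essential technical obstacle throughout is the sign- and composition-bookkeeping across the five constituents $h_0,\dots,h_4$ of $\psi$, and in particular the $\sigma$-twist in $h_0$, which interacts non-trivially with the tensor product on $M_0\ox M'_0$; one must check that the half-dozen cross-terms appearing in the $\Gamma_1$-part of $d((x,y)\cup_\alpha(x',y'))$ cancel pairwise once weighted by $\alpha$ and $1-\alpha$, exactly mirroring Beilinson's original argument for Deligne cohomology.
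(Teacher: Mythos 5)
Your proposal is correct and is essentially the paper's own argument: the paper proves this lemma by citing Beilinson's construction \cite[1.11]{Beu:Not86}, which is precisely the $\alpha$-interpolated cup product on cones and the homotopy $(0,(\alpha-\beta)(y\diamond y'))$ between any two values of $\alpha$ that you write out explicitly.
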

\begin{proof}
	See \cite[1.11]{Beu:Not86}.
\end{proof}
\begin{rmk}[Enlarging the diagram]\label{rmk:enlarging}
	We recall some results from \cite[Ch. V, 2.3.3]{Lev:Mix98}. Let $M_1^\bullet\xrightarrow{f} M_2^\bullet \xleftarrow{g} M_3^\bullet$ (resp. $M_1^\bullet\xleftarrow{f} M_2^\bullet\xrightarrow{g} M_3^\bullet$) be a diagram of complexes in $C^b(K)$.  Let  $P^\bullet=\Cone(f-g:M_1^\bullet\x M_3^\bullet\to M_2^\bullet)[-1]$  be the \emph{quasi pull-back} complex (resp.  $Q^\bullet=\Cone((f,-g):M_2^\bullet\to M_1^\bullet\x M_3^\bullet)$ be the \emph{quasi push-out}). Assume that $f$ is quasi-isomorphism then we have the following diagrams commutative up to homotopy
	\begin{equation*}
	\xymatrix{
	P^\bullet \ar[d]_{}\ar[r]^{h}&    M^\bullet_3\ar[d]^{g}& & & M_2^{\bullet}\ar[d]_{g}\ar[r]^{f}&    M_1^\bullet\ar[d]^{}\\
	M_1^\bullet\ar[r]_{f} & M_2^\bullet & & &M_3^\bullet\ar[r]_{k} & P^\bullet }
	\end{equation*}
	such that $h$ and $k$ are quasi-isomorphisms.
	
	Now let $pHC'$ be a category of systems $(M_\rig^\bullet,M_\dR^\bullet,M_1^\bullet,M_2^\bullet,M_3^\bullet,c,s,f,g)$ similarly to Def.~\ref{def:pHC} and such that there is  a diagram
	\[
		M_\rig^\bullet \xrightarrow{c} M_1^\bullet \xleftarrow{f}M_2^\bullet \xrightarrow{g}M_3^\bullet \xleftarrow{s} M_\dR^\bullet
	\]
Then  the quasi push-out induces a functor from the category $pHC'$ to the category $pHC$. This functor is  compatible with tensor product after passing to the categories $pHK'$ and $pHK$. 
\end{rmk}
\section{Godement resolution}\label{sec:gdm}
Here we recall some facts about the \emph{generalized} Godement resolution, also called bar-resolution (we refer to \cite{Ivo:Rea05}, see also \cite[\S8.6]{Wei:An-94}).

Let $u:P\to X$ be a morphism of Grothendieck topologies  so that $P^{\sim}$ (resp. $X^{\sim}$) is the category of abelian sheaves on $P$ (resp. $X$). Then we have a 
 pair of adjoint functors $(u^*,u_*)$, where $u^*:X^{\sim} \to P^{\sim}$, $u_*:P^{\sim} \to X^{\sim}$.  For any object ${\cal F}$ of $X^{\sim}$
we can define a co-simplicial object $B^*({\cal F}):\Delta\to  X^{\sim}$  in the following way. First  let $\eta:\id_{X^{\sim}}\to u_*u^*$ and $\epsilon:u^*u_*\to \id_{P^{\sim}}$ be the natural transformations induced by adjunction.

Endow $B^{n+1}({\cal F}):=(u_*u^*)^n({\cal F})$ with co-degeneracy maps
\[
	\sigma_i^n:=(u_*u^*)^i u_*\epsilon u^* (u_*u^*)^{n-1-i}:B^{n+1}({\cal F})\to B^n({\cal F})\quad i=0,..., n-1
\]
and co-faces
\[
	\delta^{n-1}_i:=(u_*u^*)^i\eta(u_*u^*)^{n-i}:B^{n}({\cal F})\to B^{n+1}({\cal F})\quad i=0,...,n\ .
\]
\begin{lmm}
	With the above notation let $sB^*({\cal F})$ be the associated complex of objects of $X^{\sim}$. Then there is a canonical map $b_{\cal F}:{\cal F}\to sB^*({\cal F})$ such that $u^*(b_{\cal F})$ is a quasi-isomorphism. Moreover if $u^*$ is exact and conservative $b_{\cal F}$ is a quasi-isomorphism. 
\end{lmm}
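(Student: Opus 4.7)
The plan is to construct $b_{\cal F}$ as the morphism of cochain complexes in $X^\sim$ induced in degree $0$ by the augmentation $\eta_{\cal F}:{\cal F}\to u_*u^*{\cal F}=B^1({\cal F})$; the cosimplicial identity $\delta_0^0\eta_{\cal F}=\delta_1^0\eta_{\cal F}$ (a direct consequence of the naturality of $\eta$) ensures that the composition with the first differential of $sB^*({\cal F})$ vanishes, so $b_{\cal F}$ really is a map of complexes.

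For the first assertion I would exploit the standard \emph{extra codegeneracy} trick. Applying the functor $u^*$ termwise to the augmented cosimplicial object ${\cal F}\to B^{*+1}({\cal F})$ yields an augmented cosimplicial object in $P^\sim$, and the counit $\epsilon:u^*u_*\to\id_{P^\sim}$ supplies in each degree an additional codegeneracy
\[
h^n:=\epsilon_{u^*(u_*u^*)^{n-1}{\cal F}}:u^*B^{n+1}({\cal F})\longrightarrow u^*B^n({\cal F}).
\]
The triangular identities $\epsilon u^*\circ u^*\eta=\id_{u^*}$ and $u_*\epsilon\circ\eta u_*=\id_{u_*}$ translate into precisely the simplicial relations required of an extra codegeneracy, so the standard formal verification (cf.\ \cite[\S8.6]{Wei:An-94}) produces a cochain contracting homotopy for the augmented complex associated with $u^*sB^*({\cal F})$, forcing $u^*(b_{\cal F})$ to be a quasi-isomorphism. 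This bookkeeping with the $h^n$ is the main technical point, but it is a purely formal consequence of having the adjoint pair $(u^*,u_*)$.

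For the second assertion, form the mapping cone $C:=\Cone(b_{\cal F})$ in the category of complexes of $X^\sim$, so that $b_{\cal F}$ is a quasi-isomorphism if and only if $H^n(C)=0$ for every $n$. Since $u^*$ is exact it commutes with $\Cone$ and with the formation of cohomology sheaves, so the first part gives $u^*H^n(C)=H^n(u^*C)=0$ for every $n$. If $u^*$ is moreover conservative then any sheaf ${\cal G}$ on $X$ with $u^*{\cal G}=0$ must vanish (apply conservativity to the morphism ${\cal G}\to 0$), hence $H^n(C)=0$ for every $n$ and $b_{\cal F}$ is itself a quasi-isomorphism.
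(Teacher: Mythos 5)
Your proof is correct: the augmentation $\eta_{\cal F}$ does define a map of complexes by naturality of $\eta$, the counit supplies the extra codegeneracy contracting $u^*$ of the augmented complex (this is the standard cotriple/bar-resolution argument), and the exactness-plus-conservativity step via the mapping cone is sound. The paper itself gives no argument here, deferring to Ivorra's thesis (Ch.~III, Lemma~3.4.1), and your proof is precisely the standard one that reference contains, so there is nothing substantive to compare.
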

\begin{proof}
	See \cite[Ch. III, Lemma 3.4.1]{Ivo:Rea05}.
\end{proof}
 Then for any sheaf ${\cal F}\in  X^{\sim}$ (or complex of sheaves) we can define a functorial map  $b_{\cal F}:{\cal F}\to sB^*({\cal F})$ with $sB^n({\cal F}):= (u_*u^*)^{n+1}{\cal F}$.  We will denote this complex of sheaves by  $\Gdm_P({\cal F})$. In the case $u^*$ is exact and conservative  $\Gdm_P({\cal F})$  is a canonical resolution of ${\cal F}$. If ${\cal F}^\bullet$ is a complex of sheaves on $X$ we denote by $\Gdm_P({\cal F}^\bullet)$ the simple complex $s(\Gdm_P({\cal F}^i)^j)$. Often we will need to iterate this process and we will write $\Gdm_P^2({\cal F}):=\Gdm_P(\Gdm_P({\cal F}))$.

Now suppose there is a commutative diagram of sites
\begin{equation*}
\xymatrix{
P \ar[d]_{u}\ar[r]^{g}&    Q\ar[d]^{v}\\
 X\ar[r]_{f} & Y  }
\end{equation*}
and a morphism of sheaves $a:{\cal G}\to f_*{\cal F}$ where ${\cal F}$ (resp. ${\cal G}$) is a sheaf on $X$ (resp. $Y$).
\begin{lmm}\label{lmm:gdmmorphism}
	There is a canonical map $\Gdm_Q ({\cal G})\to f_* \Gdm_P({\cal F})$ compatible with $b_{\cal F}$ and $b_{\cal G}$.
\end{lmm}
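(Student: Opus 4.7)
The plan is to build the required map degree by degree from the cosimplicial structure, using the Beck--Chevalley base change transformation furnished by the commutative square of sites.

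First, the commutativity $fu=vg$ yields the identity of functors $f_*u_*=v_*g_*$ (and, by adjunction, $u^*f^*=g^*v^*$). From this I would extract the canonical natural transformation
\[
\beta\colon v^*f_*\Longrightarrow g_*u^*,
\]
defined as the map adjoint to the composition $g^*v^*f_*=u^*f^*f_*\xrightarrow{u^*\epsilon_f}u^*$, where $\epsilon_f\colon f^*f_*\to\id$ is the counit of the $(f^*,f_*)$ adjunction.

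Second, I would define level-wise morphisms
\[
\phi_n\colon (v_*v^*)^{n+1}{\cal G}\longrightarrow f_*(u_*u^*)^{n+1}{\cal F}
\]
by induction on $n$. Set $\phi_0:=v_*(\beta_{\cal F})\circ v_*v^*(a)$. For $n\geq 1$, apply $v_*v^*$ to $\phi_{n-1}$ and then compose with one further application of $v_*\beta$, sliding $f_*$ one step further outward via the identity $v_*g_*=f_*u_*$. Equivalently, $\phi_n$ is the composite of $(v_*v^*)^{n+1}(a)$ with the iterate of $\beta$ that moves $f_*$ through every $v_*v^*$ factor. The compatibility of the family $(\phi_n)_n$ with the cofaces $\delta_i^n$ and codegeneracies $\sigma_i^n$ follows from the naturality of the units and counits of $(u^*,u_*)$, $(v^*,v_*)$ together with the naturality of $\beta$. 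Passing to associated simple complexes then produces the required map $\Gdm_Q({\cal G})\to f_*\Gdm_P({\cal F})$, and the construction extends to complexes of sheaves by the usual total-complex procedure and naturality in ${\cal F},{\cal G}$.

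Finally, compatibility with $b_{\cal F}$ and $b_{\cal G}$ reduces at degree $0$ to checking the commutativity of
\[
\xymatrix{
{\cal G} \ar[r]^-{a} \ar[d]_{b_{{\cal G}}} & f_*{\cal F} \ar[d]^{f_*b_{{\cal F}}} \\
\Gdm_Q({\cal G}) \ar[r]_-{\phi} & f_*\Gdm_P({\cal F}),
}
\]
which boils down to the identity $\phi_0\circ\eta^v_{\cal G}=f_*(\eta^u_{\cal F})\circ a$. This is immediate from the naturality of $\eta^v$ applied to $a:{\cal G}\to f_*{\cal F}$ combined with the defining formula for $\phi_0$ and the fact that the adjoint of $\beta_{\cal F}\circ v^*\eta^v_{f_*{\cal F}}$ recovers $\eta^u_{\cal F}$.

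The main obstacle is the cosimplicial compatibility: each individual square commutes by naturality, but keeping track of the iterated Beck--Chevalley composition intertwined with cofaces and codegeneracies requires careful bookkeeping. The canonicity of the resulting map is inherited from the fact that $\beta$ is uniquely determined by the commutative diagram of topoi.
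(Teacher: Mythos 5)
Your proposal is correct and is essentially the paper's argument: the paper likewise reduces to producing the degree-zero map $v_*v^*\mathcal{G}\to f_*u_*u^*\mathcal{F}$ lifting $a$, which it builds by composing $\mathcal{G}\to f_*\mathcal{F}\to f_*u_*u^*\mathcal{F}=v_*g_*u^*\mathcal{F}$, taking the $(v^*,v_*)$-adjoint and applying $v_*$ — this is exactly your $\phi_0=v_*(\beta_{\mathcal F})\circ v_*v^*(a)$ with the base-change transformation unwound. You merely make the Beck--Chevalley map and the cosimplicial/inductive bookkeeping explicit where the paper leaves them implicit.
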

\begin{proof}
	We need only show that there is a canonical map $v_*v^* {\cal G}\to f_*u_*u^* {\cal F}$ lifting $a$. First consider the composition ${\cal G}\to f_* {\cal F}\to f_*u_*u^* {\cal F}$. Then we get a map ${\cal G}\to v_*g_* u^*{\cal F}$ because $v_*g_*=f_*u_*$. By adjunction this gives $v^*{\cal G} \to g_* u^*{\cal F} $. Then we apply $v_*$ and use $v_*g_*=f_*u_*$ to obtain the desired map.
\end{proof}
\begin{rmk}[Tensor product]
	The Godement resolution is compatible with tensor product, \textit{i.e.}, for any pair of sheaves ${\cal F},\cal G$ on $X$ there is a canonical quasi isomorphism $\Gdm_P({\cal F})\ox \Gdm_P({\cal G})\to \Gdm_P({\cal F}\ox {\cal G})$. The same holds for 
 complexes that are bounded below. (See \cite[Appendix A]{FriSus:The02})
\end{rmk}
\begin{dfn}[{\textit{cf.} \cite[Exp. IV, \S6]{SGA4.1}}]\label{def:points}
	Let $X$ be a site and $Sh(X)$ be the associated topos of sheaves of sets. A \emph{point} of $X$ is a morphism of topoi $\pi:Set \to Sh(X)$, \textit{i.e.}, a pair of adjoint functors $(\pi^*,\pi_*)$ such that $\pi^*$ is left exact.%\cite[Exp. IV, \S3]{SGA4.1}
\end{dfn}
\begin{exm}	Let $X$ be a scheme. Then any point $x$ of the topological space underling $X$ gives a point $\pi_x$ of the Zariski site of $X$. We call them Zariski points.\\
Let now $x$ be a geometric  point of $X$, then it induces a point $\pi_x$ for  the \'etale site  of $X$. We call them \'etale points of $X$.\\
Let ${\cal F}$ be a  Zariski (resp. \'etale) sheaf  $X$ and $P$ be the set of Zariski (resp. \'etale) points of $X$. Then the functor ${\cal F}\mapsto \sqcup_{\pi\in P} {\cal F}_\pi:=\pi^*{\cal F}$ is exact and conservative. In other words the Zariski (resp. \'etale) site of $X$ has enough points.
\end{exm}
\begin{dfn}[Points on rigid analytic spaces {\cite{PutSch:Poi95}}]\label{def:rigpt}
	Let $\cal X$ be a rigid analytic space over $K$. We recall that  a \emph{filter} $f$ on $\cal X$ is a collection $({\cal U}_\alpha)_\alpha$ of admissible open subsets of $X$ satisfying:
	\begin{enumerate}[(i)]
		\item ${\cal X}\in f$, $\vuoto\notin f$;
		\item if ${\cal U}_\alpha,{\cal U}_\beta \in f$ then ${\cal U}_\alpha \cap {\cal U}_\beta \in f$;
		\item if ${\cal U}_\alpha \in f$ and the admissible open ${\cal V}$ contains ${\cal U}_\alpha$, then ${\cal V}\in f$.
	\end{enumerate}
	A \emph{prime filter} on $\cal X$ is a filter $p$ satisfying moreover
	\begin{enumerate}[(iv)]
		\item if ${\cal U}\in p$ and ${\cal U}=\cup_{i\in I} {\cal U}_i'$ is an admissible covering of ${\cal U}$, then  ${\cal U}_{i_0}'\in p$ for some $i_0\in I$.
	\end{enumerate}
		
	Let $P(\cal X)$ be the set of all prime filters of $\cal X$. The filters on $\cal X$ are ordered with respect to inclusion. %Let $M(X)\subset P(X)$ be the set of maximal filters. 
	We can give to $P(\cal X)$ a Grothendieck topology and define a morphism of sites $\sigma :P({\cal X})\to \cal X$. Also we denote by $Pt(\cal X)$ the set of prime filters with the discrete topology. Let $i:Pt({\cal X})\to P(X)$ be the canonical inclusion and $\xi=\sigma \circ i$. 
\end{dfn}
\begin{rmk}
	Let $p=({\cal U}_\alpha)_\alpha$ be a prime filter on $\cal X$ as above. Then $p$ is a point of the site $\cal X$ (see Def.~\ref{def:points}). Using the construction of the continuos map $\sigma$ of \cite{PutSch:Poi95} we get that the morphism of topoi $\pi: Set \to Sh(\cal X)$, associated to $p$, is defined in the following way: for any sheaf (of sets) ${\cal F}$ on $\cal X$ let $\pi^*({\cal F})=\colim_{\alpha}{\cal F}({\cal U}_\alpha)$; for any set $\cal S$ and ${\cal V}$ admissible open in $\cal X$, let 
	\[
		\pi_*{\cal S}({\cal V})=\begin{cases}
			{\cal S} &\text{if}\ {\cal V}= {\cal U}_\alpha\  \text{for some}\  \alpha\\
			0& \text{otherwise}
		\end{cases}
	\]
	where $0$ denotes the final object in the category $Set$. In fact with the above notations we get easily the adjunction
	\[
		\Hom_{Set}(\pi^*({\cal F}),{\cal S})=\lim_\alpha\Hom_{Set}({\cal F}({\cal U}_\alpha),{\cal S})=\Hom_{Sh({\cal X})}({\cal F},\pi_*{\cal S}) \ .
	\]
\end{rmk}
\begin{lmm}\label{lmm:vdpsch}
	With the above notation the functor $\xi\inv:Sh({\cal X})\to Sh(Pt({\cal X}))$ is exact and conservative. In other words for any $p\in Pt(\cal X)$ the functors $Sh({\cal X})\ni {\cal F}\mapsto {\cal F}_p$ are exact and ${\cal F}=0$ if all ${\cal F}_p=0$.
\end{lmm}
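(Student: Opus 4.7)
The plan is to reduce both claims to statements about the stalk functors at individual prime filters, and then argue exactness from a directedness observation and conservativity via a Zorn's lemma argument exploiting the prime filter axiom (iv).

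First I would unpack what $\xi^{-1}$ looks like. Since $Pt(\mathcal{X})$ carries the discrete topology, $Sh(Pt(\mathcal{X}))$ is just the product category $\prod_{p \in Pt(\mathcal{X})} Set$, and $\xi^{-1}\mathcal{F}$ is the family $(\mathcal{F}_p)_{p \in Pt(\mathcal{X})}$ where, by the description of the point functor $\pi^*$ in the preceding remark, $\mathcal{F}_p = \operatorname{colim}_{\alpha} \mathcal{F}(\mathcal{U}_\alpha)$ for $p = (\mathcal{U}_\alpha)_\alpha$. Thus $\xi^{-1}$ is exact (resp.\ conservative) if and only if each stalk functor $\mathcal{F} \mapsto \mathcal{F}_p$ is exact and the family is jointly conservative.

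For exactness, the filter axiom (ii) guarantees that $p$ is directed under reverse inclusion: for any $\mathcal{U}_\alpha, \mathcal{U}_\beta \in p$ the admissible intersection $\mathcal{U}_\alpha \cap \mathcal{U}_\beta$ is again in $p$ and is contained in both. Hence $\mathcal{F}_p$ is a filtered colimit, and filtered colimits of abelian groups (or sets) commute with finite limits, which yields exactness.

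The substantive content is conservativity, and this is exactly what is proved in \cite{PutSch:Poi95}. I would sketch the argument as follows: given $\mathcal{F} \neq 0$, fix a non-zero section $s \in \mathcal{F}(\mathcal{U})$ and consider the poset of filters $f$ with $\mathcal{U} \in f$ such that $s|_{\mathcal{V}} \neq 0$ for every $\mathcal{V} \in f$. This poset is non-empty (the principal filter generated by $\mathcal{U}$ works) and chains admit upper bounds by union, so Zorn yields a maximal element $p$. One checks that $p$ automatically satisfies axioms (i)--(iii); the heart of the matter is establishing axiom (iv). If $\mathcal{V} \in p$ admits an admissible covering $\mathcal{V} = \bigcup_i \mathcal{V}_i$ with no $\mathcal{V}_i$ in $p$, then maximality forces $s|_{\mathcal{V}_i \cap \mathcal{W}_i} = 0$ for some $\mathcal{W}_i \in p$, and the sheaf condition applied to the admissible covering then forces $s|_{\mathcal{V}} = 0$, contradicting $\mathcal{V} \in p$. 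Thus $p$ is a prime filter, and by construction $s_p \neq 0$, so $\xi^{-1}\mathcal{F} \neq 0$.

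The main obstacle is the Zorn/sheaf-patching step, because admissible coverings in the $G$-topology of a rigid analytic space are not closed under arbitrary refinements. This is precisely the point handled carefully in \cite{PutSch:Poi95}, so in the write-up I would simply cite that reference for the details rather than reprove them.
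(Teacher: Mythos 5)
Your proposal is correct in substance, and since the paper's own ``proof'' is nothing more than the citation to \cite[\S4]{PutSch:Poi95} (after the proof of Theorem~1 there), you end up resting on exactly the same external input. The difference is in how the hard half (conservativity) is organized. You run the direct Deligne-style argument on the site $\mathcal{X}$ itself: Zorn's lemma produces a filter maximal among those on which a fixed nonzero section survives, and primality is extracted from the sheaf axiom. As you note, the patching step is where this creaks: from $s|_{\mathcal{V}_i\cap\mathcal{W}_i}=0$ with $\mathcal{W}_i\in p$ \emph{depending on} $i$ you cannot apply the sheaf condition, since $\{\mathcal{V}_i\cap\mathcal{W}_i\}_i$ is not an admissible covering of anything in $p$; for an infinite covering you cannot replace the $\mathcal{W}_i$ by their intersection, so one needs the finiteness/refinement properties of admissible coverings of quasi-compact rigid spaces. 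What van der Put and Schneider actually do --- and what the paper's citation points at --- is different and cleaner: Theorem~1 of \cite{PutSch:Poi95} establishes an equivalence of topoi between $Sh(\mathcal{X})$ and sheaves on the genuine (sober, quasi-compact) topological space $P(\mathcal{X})$ of prime filters, after which conservativity of the stalk functors is the classical statement that a topological space has enough points; the combinatorics of admissible coverings is absorbed once and for all into the proof of that equivalence. Your route would work if the patching step were carried out with the finite-refinement argument made explicit, but deferring precisely that step to the reference, as you do, is both honest and consistent with what the paper itself does. The exactness half (filteredness of the colimit from axiom~(ii), commutation of filtered colimits with finite limits) is correct as written.
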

\begin{proof}
	See \cite[\S4]{PutSch:Poi95} after the proof of the Theorem 1.
\end{proof}
\section{Rigid and de Rham complexes}
We begin this section by  recalling the construction of the rigid complexes of  \cite[\S4]{Bes:Syn00}. Instead of the techniques of \cite[Vbis]{SGA4.2} we use the machinery of generalized Godement resolution as developed in Section~\ref{sec:gdm}.  
This alternative approach was also mentioned by Besser in the introduction of his paper.\\
 We then recall the construction of the de Rham complexes.\\[1ex]
We call a \emph{rigid triple} a system $(X,\bar X, \sf P)$ where: $X$ is an algebraic $k$-scheme; $j:X\to \bar X$ is an open embedding into a proper $k$-scheme; $\bar X\to \sf P$ is a closed embedding into a $p$-adic formal $\scrV$-scheme $\sf P$ which is smooth in a neighborhood of $X$.
\begin{dfn}[{\cite[4.2, 4.4]{Bes:Syn00}}]\label{def:compatible morph}
	Let $(X,\bar X, \sf P)$, $(Y,\bar Y,\sf Q)$ be two rigid triples and let $f:X\to Y$ be a morphism of $k$-schemes. Let ${\cal U}\subset ]\bar X[_P$ be a strict neighborhood of $]X[_{\sf P}$ and $F:{\cal U}\to {\sf Q}_K$ be a morphism of $K$-rigid spaces. We say that $F$ is \emph{compatible} with $f$ if it induces the following commutative diagram
	\begin{equation*}
	\xymatrix{
	]X[_{\sf P} \ar[d]_{sp}\ar[r]^{F}&   ]Y[_{\sf Q} \ar[d]^{sp}\\
	X\ar[r]_{f} &   Y}
	\end{equation*}
 We write $\Hom_f({\cal U},{\sf Q}_K)$ for the collection of morphism compatible with $f$.
\end{dfn}
The collection of rigid triples forms a category with the following morphisms: $\Hom((X,\bar X,{\sf P}),(Y,\bar Y, {\sf Q}))=$ the set of pairs $(f,F)$ where $f:X\to Y$ is a $k$-morphism and $F\in \colim_{\cal U}\Hom_f({\cal U},\sf Q)$. We denote it by $\rm RT$.

\begin{prp}\label{prp:rig}
	\begin{enumerate}[(i)]
		\item There is a functor from the category of algebraic $k$-schemes $\Sch/k$ 
		\[
			(\Sch/k)^\circ\rightarrow C(K_0)\quad X\mapsto R\Gamma_\rig(X/K_0)
		\]
		such that $H^i(R\Gamma_\rig(X/K_0))\iso H^i_\rig(X/K_0)$ .
		 Moreover there exists a canonical $\sigma$-linear endomorphism of $R\Gamma_{\rig}(X/K_0)$  inducing the Frobenius on cohomology.
		 \item There are two functors ${\rm RT}\rightarrow C(K)$
		\[
			 \widetilde{R\Gamma}_\rig(X)_{\bar X,{\sf P}} \ ,\quad R\Gamma_{\rig}(X/K)_{\bar X,\sf P}
		\]
		and functorial quasi-isomorphisms with respect to maps of rigid triples
		\[
			R\Gamma_{\rig}(X/K)\gets \widetilde{R\Gamma}_\rig(X)_{\bar X,\sf P}\to R\Gamma_\rig(X)_{\bar X,\sf P}\ .
		\]
	\end{enumerate}
\end{prp}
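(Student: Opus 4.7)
The plan is to reproduce Besser's construction by substituting his hypercover machinery with the generalized Godement resolution of Section~\ref{sec:gdm} applied to the overconvergent de Rham complex, exploiting the existence of enough points on rigid analytic sites (Definition~\ref{def:rigpt}, Lemma~\ref{lmm:vdpsch}).

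For part (ii), given a rigid triple $(X,\bar X,{\sf P})$, let $j:\,]X[_{\sf P}\hookrightarrow\,]\bar X[_{\sf P}$ be the inclusion of the tube and let $j^\dagger\Omega^\bullet$ denote the overconvergent de Rham complex on $]\bar X[_{\sf P}$ together with its restrictions to strict neighborhoods $\mathcal{U}$ of $]X[_{\sf P}$. Applying the Godement functor $\Gdm_{Pt}$ associated with the morphism of sites $\xi:Pt(\mathcal{U})\to\mathcal{U}$, which is exact and conservative by Lemma~\ref{lmm:vdpsch}, and then taking global sections produces canonical resolutions whose cohomology recovers rigid cohomology. I would set
\[
R\Gamma_\rig(X)_{\bar X,{\sf P}} := \colim_{\mathcal{U}}\,\Gamma\bigl(\mathcal{U},\Gdm_{Pt}(j^\dagger\Omega^\bullet)\bigr),
\]
the colimit running over strict neighborhoods of $]X[_{\sf P}$, and define $\widetilde{R\Gamma}_\rig(X)_{\bar X,{\sf P}}$ as a suitable enlarged model, for instance via a double Godement resolution or by additionally resolving with respect to the Zariski points of $\bar X$. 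The enlargement is engineered so that any compatible morphism of triples in the sense of Definition~\ref{def:compatible morph} induces a \emph{strict} morphism of complexes: double resolutions absorb the indeterminacy arising from the choice of strict neighborhood, while a second Godement step applied to an already flasque complex is acyclic, giving the required quasi-isomorphism $\widetilde{R\Gamma}_\rig\to R\Gamma_\rig(X)_{\bar X,{\sf P}}$. Strict functoriality in morphisms of rigid triples is then a direct application of Lemma~\ref{lmm:gdmmorphism} to the induced commutative square of sites.

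For part (i), to define a functor on $(\Sch/k)^\circ$, I would exploit that every algebraic $k$-scheme admits a Zariski (or simplicial) covering by pieces fitting into rigid triples equipped with local Frobenius lifts. Organizing all such data into a category in the spirit of Besser's \emph{rigid data} and applying the simplicial analogue of the construction above gives a complex of $K_0$-vector spaces: the descent from $K$ to $K_0$ is available once a Frobenius lift $\phi:{\sf P}\to{\sf P}$ has been chosen, since $\phi^*$ then acts $\sigma$-semilinearly on $j^\dagger\Omega^\bullet$. Transporting $\phi^*$ through the Godement resolution via Lemma~\ref{lmm:gdmmorphism} yields the canonical $\sigma$-linear Frobenius endomorphism on $R\Gamma_\rig(X/K_0)$, inducing the standard Frobenius in cohomology. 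Functoriality in a $k$-morphism $f:X\to Y$ is obtained by passing to the filtered colimit over rigid data compatible with $f$, which does not alter cohomology.

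The main obstacle is precisely this last step: a $k$-morphism $f:X\to Y$ rarely lifts directly to a morphism of rigid triples, so strict functoriality has to be engineered through simultaneous choices of strict neighborhoods, Frobenius lifts and local triples. Keeping all these choices compatible is the technical core of Besser's argument in \cite[\S4]{Bes:Syn00}, and transferring it to the Godement framework requires verifying that the bar resolution commutes both with the colimit over strict neighborhoods and with the transition maps for different Frobenius lifts; both reduce, via the exactness and conservativity of $\xi^{-1}$ together with iterated application of Lemma~\ref{lmm:gdmmorphism}, to diagram chases that are routine but combinatorially heavy.
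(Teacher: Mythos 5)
Your overall strategy is exactly the paper's: the proof given there is a one-line citation of Besser \cite[4.9, 4.21, 4.22]{Bes:Syn00}, supplemented by Remark~\ref{rmk:building block}, which explains that Besser's compatible systems of flasque resolutions may be replaced by the generalized Godement resolution built from the van der Put--Schneider points, setting $R\Gamma_\rig(X/K)_{\bar X,\sf P}=\colim_{\cal U}\Gamma({\cal U},\Gdm_\an j^\dag_X\Gdm_\an\Omega^\bullet_{\cal U})$ and then taking colimits over Besser's filtered index categories ${\rm SET}^0_X$ and ${\rm SET}^0_{(X,\bar X,\sf P)}$ of rigid data to obtain $R\Gamma_\rig(X/K)$ and $\widetilde{R\Gamma}_\rig(X)_{\bar X,\sf P}$. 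Your treatment of part (i) (strict functoriality via colimits over categories of local data with Frobenius lifts, the $\sigma$-linear endomorphism coming from the lifts) and your identification of where the real work lies are accurate.

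The one substantive divergence is your construction of $\widetilde{R\Gamma}_\rig(X)_{\bar X,\sf P}$. It is not an ``enlarged resolution'' of $R\Gamma_\rig(X)_{\bar X,\sf P}$ (a double Godement step, extra Zariski points, etc.): any such object is still built solely from the fixed triple, so while it would be quasi-isomorphic to $R\Gamma_\rig(X)_{\bar X,\sf P}$, it carries no canonical ${\rm RT}$-functorial map to the triple-independent complex $R\Gamma_\rig(X/K)$ --- and producing that left-hand arrow of the zig-zag is the entire reason the intermediate object exists. In the paper, following Besser, $\widetilde{R\Gamma}_\rig(X)_{\bar X,\sf P}$ is the colimit of the complexes $R\Gamma_\rig(X/K)_{\bar X_A,{\sf P}_A}$ over the filtered category ${\rm SET}^0_{(X,\bar X,\sf P)}$ of rigid data compatible with the fixed triple; the functor to ${\rm SET}^0_X$ induces the map to $R\Gamma_\rig(X/K)$, the augmentation to the fixed triple induces the map to $R\Gamma_\rig(X)_{\bar X,\sf P}$, and both are quasi-isomorphisms because the index categories are filtered and all transition maps are quasi-isomorphisms. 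A minor further point: the building block is the interleaved resolution $\Gdm_\an j^\dag_X\Gdm_\an\Omega^\bullet_{\cal U}$ rather than $\Gdm_\an(j^\dag_X\Omega^\bullet_{\cal U})$; for the present proposition either computes rigid cohomology, but the inner resolution is what later makes the compact-support analogue (with $\underline{\Gamma}_{]X[}$ in place of $j^\dag_X$, for which flasqueness of the inner layer is used) and the cup product work at the level of complexes.
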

\begin{proof}
	See \cite[4.9, 4.21, 4.22]{Bes:Syn00}.
\end{proof}
\begin{rmk}\label{rmk:building block}
	The building block of the construction is the functor $R\Gamma_\rig(X/K)_{\bar X, \sf P}$. That complex  is constructed with  a system of  compatible resolution of the over-convergent de Rham complexes $j^\dag_X\Omega_{\cal U}^\bullet$ where $\cal U$ runs over all strict neighborhood of the tube of $X$. Using Godement resolution we can explicitly define
	\[
		R\Gamma_{\rig}(X/K)_{\bar X,\sf P}:=\colim_{\cal U}\Gamma({\cal U}, \Gdm_\an j^\dag_X\Gdm_\an \Omega_{\cal U}^\bullet)
	\] 
	where $\Gdm_\an=\Gdm_{Pt({\cal U})}$. This will be an essential ingredient for achieving the main results of the paper. \\
	All the proofs of \cite[\S4]{Bes:Syn00} work using this \emph{Godement} complex. We recall that 
	\[
		R\Gamma_{\rig}(X/K):=\colim_{A\in {\rm SET}^0_X}R\Gamma_{\rig}(X/K)_{\bar X_A,{\sf P}_A}\qquad \widetilde{R\Gamma}_{\rig}(X/K):=\colim_{A\in {\rm SET}^0_{(X,\bar X,{\sf P})}}R\Gamma_{\rig}(X/K)_{\bar X_A,{\sf P}_A}
	\]
	where ${\rm SET}^0_X$ and ${\rm SET}^0_{(X,\bar X,\sf P)}$ are filtered category of indexes.
\end{rmk}
With some modifications we can provide a compact support version of the above functors. We just need to be careful in the choice of morphisms of rigid triples. 
\begin{dfn}
	Let $(X,\bar X, \sf P)$, $(Y,\bar Y, \sf Q)$ be two rigid triples and let $f:X\to Y$ be a morphism of $k$-schemes. Let $F:{\cal U}\to {\sf Q}_K$ be compatible with $f$ (as in Def.~\ref{def:compatible morph}). We say that $F$ is \emph{strict} if 
	there is a commutative diagram

	$$\begin{CD}
	]X[_{\sf P} @>>>     {\cal U}                  @<<<              {\cal U}\setminus]X[_{\sf P}\\
	@VVFV                               @VVFV                               @VVFV  \\
	]Y[_{\sf Q} @>>>    {\cal V}                  @<<<              {\cal V}\setminus]Y[_{\sf Q}     \\
	\end{CD}$$
	where ${\cal V}$ is a strict neighborhood of $]Y[_{\sf Q}$ in $]\overline{Y}[_{\sf Q}$
\end{dfn}
It is easy to show that strict morphisms are composable. We denote by ${\rm RT}_c$ the category   of \emph{rigid triples with proper morphisms} which is the (not full) sub-category of $\rm RT$ with the same objects and morphisms pairs $(f,F)$, where $f$ is proper and $F$ is a germ of  a strict compatible morphism.
\begin{lmm}	\label{lmm:compactsupportrigid}
	\begin{enumerate}[(i)]
		\item  Let $(X,\bar X,\sf  P)$ be a rigid triple and let ${\cal U}$ be a strict neighborhood of $]X[_{\sf P}$. Then 
		\[
			H^i(\Gamma({\cal U},\Gdm_\an\underline{\Gamma}_{]X[_{\sf P}}\Gdm_\an \Omega_{\cal U}^\bullet))=H^i_{\rig,c}(X)\ .
		\]
		\item  Let $(Y,\bar Y,\sf Q)$ be another rigid triple, $f:X\to Y$ be a proper $k$-morphism and let $F:{\cal U}\to {\sf Q}_K$ be a morphism of $K$-analytic space compatible with $f$ and strict. Then there is a canonical map
		\[
			F^*:\Gdm_\an\underline{\Gamma}_{]Y[_{\sf Q}}\Gdm_\an \Omega_{{\sf Q}_K}^\bullet \rightarrow F_*\Gdm_\an\underline{\Gamma}_{]X[_{\sf P}}\Gdm_\an \Omega_{\cal U}^\bullet\ .
		\]
	\end{enumerate}
\end{lmm}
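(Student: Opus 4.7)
My plan is to reduce both parts of the lemma to formal properties of the generalized Godement resolution from Section~\ref{sec:gdm}, combined with Berthelot's definition of rigid cohomology with compact support as the hypercohomology of sections supported in the tube $]X[_{\sf P}$.

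For (i), I will use the standard identification $H^i_{\rig,c}(X)\cong \mathbb{H}^i(\mathcal{U},\underline{\Gamma}_{]X[_{\sf P}}\Omega^\bullet_{\mathcal{U}})$, valid for any strict neighborhood $\mathcal{U}$ of $]X[_{\sf P}$ in $]\bar X[_{\sf P}$. Since Lemma~\ref{lmm:vdpsch} provides enough points for the rigid analytic site, the functor $\Gdm_\an$ yields flasque resolutions. Thus the inner $\Gdm_\an$ turns $\Omega^\bullet_{\mathcal{U}}$ into a complex of $\underline{\Gamma}_{]X[_{\sf P}}$-acyclic sheaves, so that $\underline{\Gamma}_{]X[_{\sf P}}\Gdm_\an\Omega^\bullet_{\mathcal{U}}$ computes $R\underline{\Gamma}_{]X[_{\sf P}}\Omega^\bullet_{\mathcal{U}}$. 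Applying the outer $\Gdm_\an$ followed by $\Gamma(\mathcal{U},-)$ then computes its hypercohomology on $\mathcal{U}$, and composing these quasi-isomorphisms gives the required identification.

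For (ii), the key step is to construct, for every abelian sheaf $\mathcal{G}$ on a strict neighborhood $\mathcal{V}$ of $]Y[_{\sf Q}$, a natural morphism
\[
F^{-1}\underline{\Gamma}_{]Y[_{\sf Q}}\mathcal{G}\longrightarrow \underline{\Gamma}_{]X[_{\sf P}}F^{-1}\mathcal{G},
\]
coming directly from the strictness diagram: since $F$ sends $\mathcal{U}\setminus ]X[_{\sf P}$ into $\mathcal{V}\setminus ]Y[_{\sf Q}$, any local section of $\mathcal{G}$ vanishing on $\mathcal{V}\setminus ]Y[_{\sf Q}$ pulls back to a section of $F^{-1}\mathcal{G}$ vanishing on $\mathcal{U}\setminus ]X[_{\sf P}$. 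Composing with the de~Rham pullback $F^{-1}\Omega^\bullet_{{\sf Q}_K}\to \Omega^\bullet_{\mathcal{U}}$ and then applying the Godement functoriality of Lemma~\ref{lmm:gdmmorphism} twice, once for each iteration of $\Gdm_\an$, produces the required morphism $F^*$.

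The main obstacle I anticipate is precisely the verification of the support-compatibility morphism in (ii): it is strictness rather than mere compatibility that guarantees the existence of a natural map between the two $\underline{\Gamma}$-functors. Without the auxiliary square controlling the complements, the pullback could carry sections supported in $]Y[_{\sf Q}$ off the tube $]X[_{\sf P}$, and no morphism of Godement complexes of the required shape would exist. Once this support compatibility is established, all remaining steps are formal consequences of Lemma~\ref{lmm:gdmmorphism} and the exactness properties of $\Gdm_\an$.
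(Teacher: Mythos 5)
Your proof follows essentially the same route as the paper's: part (i) rests on the $\underline{\Gamma}_{]X[_{\sf P}}$-acyclicity of the flasque sheaves produced by $\Gdm_\an$ together with Berthelot's description of $H^i_{\rig,c}$, and part (ii) on the strictness-induced natural transformation $F^{-1}\underline{\Gamma}_{]Y[_{\sf Q}}\to\underline{\Gamma}_{]X[_{\sf P}}F^{-1}$ combined with two applications of Lemma~\ref{lmm:gdmmorphism}. The only point the paper makes explicit that you merely assert is the acyclicity of flasque sheaves for $\underline{\Gamma}_{]X[_{\sf P}}$, which it checks by identifying $\R\underline{\Gamma}_{]X[_{\sf P}}F$ with $\Cone(a:F\to i_*i^*F)[-1]$ and using surjectivity of $a$.
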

\begin{proof}
	i) It is sufficient to note that $\Gdm_\an (\Omega_{\cal U}^\bullet)$ is a complex of flasque sheaves and that a flasque sheaf is acyclic for $\underline{\Gamma}_{]X[_{\sf P}}$. Let $F$ be a flasque sheaf on the rigid analytic space ${\cal U}$. By definition $\underline{\Gamma}_{]X[_{\sf P}}=\Ker (a:F\to i_*i^*F)$. It is easy to check that $\R^qi_*i^*F=0$ for $\ge 1$. Hence $\R\underline{\Gamma}_{]X[_{\sf P}}F\iso \Cone(a:F\to i_*i^*F)[-1]$. But by hypothesis the map $a$ is surjective so that $\Cone(a:F\to i_*i^*F)[-1]\iso \Ker(a)=\R^0\underline{\Gamma}_{]X[_{\sf P}}$.
	
	ii) First consider the canonical pull-back of differential forms  $F^*:\Omega_{{\sf Q}_K}\to F_*\Omega_{\cal U}$. Then by  Lemma~\ref{lmm:gdmmorphism} we get a map $\Gdm_{Pt({\sf Q}_K)} \Omega_{{\sf Q}_K}^\bullet \rightarrow F_*\Gdm_{Pt({\cal U})} \Omega_{\cal U}$. Applying the functor $\underline{\Gamma}_{]X[_{\sf P}}$ to the adjoint map we get $\underline{\Gamma}_{]X[_{\sf P}}F\inv\Gdm_{Pt({\sf Q}_K)} \Omega_{{\sf Q}_K} \to \underline{\Gamma}_{]X[_{\sf P}}\Gdm_{Pt({\cal U})} \Omega_{\cal U}$. But the strictness of $F$ implies that there is a canonical map $F\inv \underline{\Gamma}_{]Y[_{\sf Q}}\to \underline{\Gamma}_{]X[_{\sf P}}F\inv$ (see \cite[Proof of Prop. 5.2.17]{Le-:Rig07}). Hence we have a map $F\inv\underline{\Gamma}_{]Y[_{\sf Q}}\Gdm_{Pt({\sf Q}_K)} \Omega_{{\sf Q}_K} \to \underline{\Gamma}_{]X[_{\sf P}}\Gdm_{Pt({\cal U})} \Omega_{\cal U}$. We can conclude the proof by taking the adjoint of this map and again applying Lemma~\ref{lmm:gdmmorphism}.
\end{proof}	
	\begin{prp}\label{prp:rig cpt}
		\begin{enumerate}[(i)]
			\item There is a functor from the category of algebraic  $k$-schemes with proper morphisms $\Sch_c/k$
			\[
		 (\Sch_c/k)^\circ\rightarrow C(K_0)\quad X\mapsto R\Gamma_{\rig,c}(X/K_0)
			\]
			such that  $H^i(R\Gamma_{\rig,c}(X/K_0))\iso H^i_{\rig,c}(X/K_0)$.
			 Moreover there exists a canonical $\sigma$-linear endomorphism of $R\Gamma_{\rig,c}(X/K_0)$  inducing the Frobenius on cohomology.
			 \item There are two functors ${\rm RT}_c\rightarrow C(K)$
			\[
				 \widetilde{R\Gamma}_{\rig,c}(X)_{\bar X,\sf P} \ ,\quad R\Gamma_{\rig,c}(X/K)_{\bar X,\sf P}
			\]
			and functorial quasi-isomorphisms with respect to maps of rigid triples
			\[
				R\Gamma_{\rig,c}(X/K)\gets \widetilde{R\Gamma}_{\rig,c}(X)_{\bar X,\sf P}\to R\Gamma_{\rig,c}(X)_{\bar X,\sf P}\ .
			\]
		\end{enumerate}
	\end{prp}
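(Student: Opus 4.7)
The plan is to mimic the construction of Prop.~\ref{prp:rig} (as reviewed in Remark~\ref{rmk:building block}) replacing the overconvergent functor $j^\dag_X$ by the local sections functor $\underline{\Gamma}_{]X[_{\sf P}}$, and restricting to strict compatible morphisms between rigid triples so that Lemma~\ref{lmm:compactsupportrigid} applies. The building block is
\[
R\Gamma_{\rig,c}(X/K)_{\bar X,{\sf P}}:=\colim_{\cal U}\Gamma({\cal U},\Gdm_\an\underline{\Gamma}_{]X[_{\sf P}}\Gdm_\an\Omega_{\cal U}^\bullet),
\]
where $\cal U$ ranges over strict neighborhoods of $]X[_{\sf P}$ in $]\bar X[_{\sf P}$. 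By Lemma~\ref{lmm:compactsupportrigid}(i) this complex computes $H^*_{\rig,c}(X)$. For a morphism $(f,F):(X,\bar X,{\sf P})\to(Y,\bar Y,{\sf Q})$ in ${\rm RT}_c$ (so $f$ proper and $F$ strict), Lemma~\ref{lmm:compactsupportrigid}(ii) gives a map on the level of sheaves that, upon taking $\Gamma({\cal V},-)$ and the colimit over $\cal V$, induces $R\Gamma_{\rig,c}(Y/K)_{\bar Y,{\sf Q}}\to R\Gamma_{\rig,c}(X/K)_{\bar X,{\sf P}}$. This establishes the functor $(X,\bar X,{\sf P})\mapsto R\Gamma_{\rig,c}(X/K)_{\bar X,{\sf P}}$ on ${\rm RT}_c$.

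Next, I would import Besser's filtered index categories ${\rm SET}^0_X$ and ${\rm SET}^0_{(X,\bar X,{\sf P})}$ from \cite[\S4]{Bes:Syn00}, but restrict the transition maps to strict ones; one checks, exactly as in \emph{loc.\ cit.}, that these categories remain filtered (strictness is preserved by the standard constructions of products of triples and enlargements of formal schemes, because preimages of tubes decompose as in the definition of strictness). Setting
\[
\widetilde{R\Gamma}_{\rig,c}(X)_{\bar X,{\sf P}}:=\colim_{A\in{\rm SET}^0_{(X,\bar X,{\sf P})}}R\Gamma_{\rig,c}(X/K)_{\bar X_A,{\sf P}_A},\qquad R\Gamma_{\rig,c}(X/K):=\colim_{A\in{\rm SET}^0_X}R\Gamma_{\rig,c}(X/K)_{\bar X_A,{\sf P}_A},
\]
the two canonical maps in (ii) arise from the comparison between the two indexing categories, and they are quasi-isomorphisms at the level of individual triples (both compute $H^*_{\rig,c}(X)$), hence after filtered colimit as well. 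This gives (ii).

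For part (i), I would repeat the construction over the absolute base: start with rigid triples $(X,\bar X,{\sf P})$ where $\sf P$ is a $p$-adic formal $\Spf(R_0)$-scheme smooth near $X$, obtaining $R\Gamma_{\rig,c}(X/K_0)$ as a complex of $K_0$-vector spaces with the correct cohomology. The Frobenius endomorphism is produced exactly as in Besser: choose local lifts of $\sigma$ on the formal schemes ${\sf P}_A$; any two such lifts induce homotopic maps on the overconvergent de Rham complex, and this homotopy survives passage through $\underline{\Gamma}_{]X[_{\sf P}}\Gdm_\an$ because the homotopy is a map of complexes of sheaves on $\cal U$. After the colimit over strict enlargements one obtains a well-defined $\sigma$-linear endomorphism inducing the Frobenius on $H^*_{\rig,c}(X/K_0)$. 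Functoriality in proper $k$-morphisms $f:X\to Y$ follows by pulling back an indexing system for $Y$ to one for $X$ (using properness of $f$ to ensure the lifts are strict) and invoking the functoriality already established on ${\rm RT}_c$.

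The main obstacle is precisely this last point: verifying that Besser's cofinality and filteredness arguments for the indexing categories ${\rm SET}^0_?$ remain valid after restricting morphisms to be strict, and that an arbitrary proper $k$-morphism $f:X\to Y$ actually lifts to a system of strict compatible morphisms in the sense of Lemma~\ref{lmm:compactsupportrigid}(ii). The geometric content is that, given local lifts of $f$ to morphisms $F:{\cal U}\to{\sf Q}_K$ of suitable strict neighborhoods, the properness of $f$ forces $F$ to restrict compatibly to the complements of the tubes, so that after possibly shrinking $\cal U$ the compatibility diagram of Definition~\ref{def:compatible morph} can be upgraded to the strict commutative diagram required. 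Once this cofinality is in hand, all functoriality, independence of choices, and comparison assertions reduce formally to the non-compact case of Prop.~\ref{prp:rig}.
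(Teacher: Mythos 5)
Your proposal follows essentially the same route as the paper: the paper's proof simply states that, in view of Lemma~\ref{lmm:compactsupportrigid}, one mimics Besser's construction in \cite[4.9, 4.21, 4.22]{Bes:Syn00} restricted to proper $k$-morphisms and strict compatible maps, with exactly the building block and colimits over ${\rm SET}^0_X$ and ${\rm SET}^0_{(X,\bar X,{\sf P})}$ that you write down. Your additional attention to the filteredness/cofinality of the restricted index categories and to lifting proper morphisms to strict compatible ones is a sound elaboration of details the paper leaves implicit.
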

\begin{proof}
	In view of Lemma~\ref{lmm:compactsupportrigid} it sufficient to mimic the  construction given in  \cite[4.9, 4.21, 4.22]{Bes:Syn00} but using only proper morphisms of $k$-schemes and strict compatible maps. In this case the functors used in the construction are 
	
	\[
		R\Gamma_{\rig,c}(X/K)_{\bar X,\sf P}:=\colim_{\cal U}\Gamma({\cal U},\Gdm_\an\underline{\Gamma}_{]X[_{\sf P}}\Gdm_\an \Omega_{\cal U}^\bullet)
	\] 
and
	\[
		R\Gamma_{\rig,c}(X/K):=\colim_{A\in {\rm SET}^0_X}R\Gamma_{\rig,c}(X/K)_{\bar X_A,{\sf P}_A}\qquad \widetilde{R\Gamma}_{\rig,c}(X/K):=\colim_{A\in {\rm SET}^0_{(X,\bar X,{\sf P})}}R\Gamma_{\rig,c}(X/K)_{\bar X_A,{\sf P}_A}\ .
	\]
	%where ${\rm SET}^0_X$ and ${\rm SET}^0_{(X,\bar X,\sf P)}$.
\end{proof}	
%	
%\section{de Rham complex}	
%	
Now  we focus on de Rham complexes and we deal with smooth $K$-algebraic schemes. Let $X$ be a smooth algebraic $K$-scheme. The (algebraic) de Rham cohomology of $X$ is the hyper-cohomology of its complex of Kähler differentials $H^i_\dR(X/K):=H^i(X,\Omega_{X/K}^\bullet)$ (See \cite{Gro:On-66}). We can also define the de Rham cohomology with compact support (see \cite[\S1]{BalCaiFio:Poi04}) as the hyper-cohomology groups $H^i_{dR,c}(X/K):= H^i(\bar X,\lim_n J^n\Omega^\bullet_{\bar X/K})$ where $X\to \bar X$ is a smooth compactification and $J$ is the sheaf of ideals associated to the complement $\bar X\setminus X$ (this definition does not depend on the choice of $\bar X$ \cite[Thr. 1.8]{BalCaiFio:Poi04}). In order to consider the Hodge filtration on the de Rham cohomology groups we fix a normal crossings compactification $g:X\to Y$ and let $D:=Y\setminus X$ be the complement divisor (This is possible by the Nagata Compactification Theorem and the Hironaka Resolution Theorem, see \cite[\S3.2.1]{Del:TDH2}). We denote by $\Omega_Y^\bullet\langle D\rangle$ the  de Rham complex of $Y$ with logarithmic poles along $D$ (in the Zariski topology) (See \cite[3.3]{Jan:Mix90}). Let  $I\subset \O_Y$ be the defining sheaf of ideals of $D$. 
\begin{prp}\label{prp:derham}
	With the above notations
	\begin{enumerate}[(i)]
		\item there is a canonical isomorphism
		\[
			H^i_{\dR}(X/K)\iso H^i(Y,\Omega_Y^\bullet\langle D\rangle)\qquad (\text{resp.}\  H^i_{\dR,c}(X)\iso H^i(Y,I\Omega_Y^\bullet\langle D\rangle))\ .
		\]
		\item The spectral sequence 
		\[
			E_1^{p,q}=H^q(Y,\Omega^p)\Rightarrow H^i(Y,\Omega^\bullet) 
		\]
		degenerates at $1$ for $\Omega^\bullet=\Omega_Y^\bullet\langle D\rangle,\ I\Omega_Y^\bullet\langle D\rangle$.
		\item The filtration induced by the above spectral sequence on $H^i_{\dR}(X)$ (resp. $H^i_{\dR,c}(X)$) is independent by the choice of $Y$. Namely 
		\[
			F^jH^i_{\dR}(X):=H^i(Y,\sigma^{\ge j}\Omega_Y^\bullet\langle D\rangle)\qquad (\text{resp.}\  H^i_{\dR,c}(X):=H^i(Y,\sigma^{\ge j}I\Omega_Y^\bullet\langle D\rangle))
		\]
		where $\sigma^{\ge j}$ is the stupid filtration.
	\end{enumerate} 
\end{prp}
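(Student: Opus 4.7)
The plan is to prove the three statements in order, reducing everything to classical results of Deligne on logarithmic de Rham complexes together with the base-change-type results from \cite{BalCaiFio:Poi04}.

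For (i), the first comparison $H^i_\dR(X/K)\iso H^i(Y,\Omega_Y^\bullet\langle D\rangle)$ is a standard consequence of the fact that the inclusion $\Omega_Y^\bullet\langle D\rangle\hookrightarrow Rg_*\Omega_X^\bullet$ is a quasi-isomorphism (checked locally by the logarithmic Poincaré lemma on polydiscs with normal crossings coordinates), so both sides compute $\h^i(X,\Omega_X^\bullet)$ via the Leray spectral sequence. For the compactly supported version, by \cite[Thr.~1.8]{BalCaiFio:Poi04} one is free to take $\bar X=Y$ in the definition $H^i_{\dR,c}(X/K)=\h^i(\bar X,\lim_n J^n\Omega_{\bar X}^\bullet)$. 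Then one compares the formal complex $\lim_n J^n\Omega_Y^\bullet$ with $I\Omega_Y^\bullet\langle D\rangle$: on a local model where $D$ is cut out by $t_1\cdots t_r=0$, a direct computation shows that both complexes are quasi-isomorphic to the subcomplex of $\Omega_Y^\bullet\langle D\rangle$ generated by $t_1\cdots t_r$, yielding the desired identification.

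For (ii), the degeneration of the Hodge-to-de Rham spectral sequence for $\Omega_Y^\bullet\langle D\rangle$ is Deligne's theorem from \cite{Del:TDH2}. Since the statement is purely algebraic, spreading out over a finitely generated $\Z$-subalgebra of $K$ and using Lefschetz's principle, or directly citing the algebraic proof of Deligne--Illusie, reduces it to the classical Hodge-theoretic statement over $\C$. For the variant with coefficients in $I\Omega_Y^\bullet\langle D\rangle$, Serre/Poincaré--Verdier duality on the smooth proper $K$-scheme $Y$ pairs $I\Omega_Y^p\langle D\rangle$ with $\Omega_Y^{d-p}\langle D\rangle$ (up to a twist by $\omega_Y$), and thus degeneration of one spectral sequence is equivalent to degeneration of the other. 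This gives the filtered identifications
\[
F^jH^i_\dR(X/K)=\h^i(Y,\sigma^{\ge j}\Omega_Y^\bullet\langle D\rangle),\qquad F^jH^i_{\dR,c}(X/K)=\h^i(Y,\sigma^{\ge j}I\Omega_Y^\bullet\langle D\rangle),
\]
as asserted in (iii).

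For the independence statement in (iii), given two normal crossings compactifications $Y_1,Y_2$ of $X$, by Nagata and Hironaka we may find a third such compactification $Y_3$ dominating both through proper birational morphisms $\pi_i:Y_3\to Y_i$ which are isomorphisms over $X$ and such that the exceptional divisors form, together with the respective boundary divisors, normal crossings divisors. It then suffices to show that for such a blow-up $\pi:Y'\to Y$ with $D'=\pi^{-1}(D)_{\rm red}$, the canonical map $\pi^*\Omega_Y^\bullet\langle D\rangle\to\Omega_{Y'}^\bullet\langle D'\rangle$ (respectively its $I$-twisted version) is a filtered quasi-isomorphism with respect to the stupid filtration, which is a local calculation on the exceptional locus using the explicit description of logarithmic differentials under blow-ups of normal crossings strata. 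The hardest step in practice is the comparison $\lim_n J^n\Omega_Y^\bullet\simeq I\Omega_Y^\bullet\langle D\rangle$ in part (i), since one must carefully match the formal, infinite-order vanishing condition at $D$ with the finite logarithmic pole condition; the remainder of the argument is a bookkeeping exercise on top of Deligne's machinery.
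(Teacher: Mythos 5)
Your route is genuinely different from the paper's, and the difference matters. The paper proves (i) and (ii) transcendentally: it chooses an embedding $K\hookrightarrow\C$, applies GAGA to obtain a filtered isomorphism with the analytic hypercohomology of $\Omega_{Y_h}^\bullet\langle D_h\rangle$ (resp. $I_h\Omega_{Y_h}^\bullet\langle D_h\rangle$), and then quotes Deligne's Th\'eorie de Hodge II for the open case and Peters--Steenbrink for the compact-support case; only the independence statement in (iii) is handled by Deligne's dominating-compactification argument, which coincides with yours. You instead propose a purely algebraic proof. For the first comparison in (i) and for the degeneration of the log spectral sequence this is viable (algebraic log Poincar\'e lemma, using that $g$ is affine so $Rg_*=g_*$, plus Deligne--Illusie), although ``polydiscs'' is analytic language for what must be a Zariski-local residue computation. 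Note also that your Serre-duality reduction in (ii) only converts the twisted $E_1$-dimensions into the untwisted ones; to conclude degeneration you still need the total dimension of $\bigoplus_i H^i(Y,I\Omega_Y^\bullet\langle D\rangle)$, which in your scheme comes from the compact-support half of (i) together with Poincar\'e duality. So (ii) is not independent of (i) as you present it.

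The genuine gap is exactly that compact-support comparison in (i). You assert that ``a direct computation shows'' that $\varprojlim_n J^n\Omega_Y^\bullet$ and $I\Omega_Y^\bullet\langle D\rangle$ are both quasi-isomorphic to the subcomplex of $\Omega_Y^\bullet\langle D\rangle$ generated by $t_1\cdots t_r$. For $I\Omega_Y^\bullet\langle D\rangle$ this is a tautology ($I$ is locally generated by $t_1\cdots t_r$, so that subcomplex \emph{is} $I\Omega_Y^\bullet\langle D\rangle$); hence the entire content of your claim is the quasi-isomorphism involving the pro-complex $\{J^n\Omega_Y^\bullet\}_n$, and that is precisely the step you do not carry out. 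It is not a naive stalkwise statement: the inverse limit (or pro-object) does not commute with the local cohomological computations, the termwise intersection $\bigcap_n J^n\Omega^p$ vanishes along $D$ by Krull, and the only evident maps are the inclusions $J^n\Omega_Y^\bullet\subset I\Omega_Y^\bullet\langle D\rangle$, whose effect on hypercohomology over the proper $Y$ is the whole question. This is exactly the point at which the authors abandon algebra for GAGA, and they say so explicitly in the remark following the proposition: they do not know an algebraic proof of (i). Until you either supply this comparison in detail or fall back on the transcendental argument, the proof is incomplete at its load-bearing step.
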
 
\begin{proof}
	Using the  argument of \cite[3.2.11]{Del:TDH2} we get the  the independence of the choice of $Y$. The same holds for $H^i_{\dR,c}(X)$ % if we consider only proper maps using \cite[Lemma 15.2.3]{Hub:Mix95}
	.\\
 Since our base field $K$ is  of characteristic $0$ we can find an embedding $\tau:K\to \C$. Then by GAGA we get an isomorphism of filtered vector spaces $H^i(Y,\Omega_Y^\bullet\langle D\rangle)\ox_K\C\iso H^i(Y_h,\Omega_{Y_h}^\bullet\langle D_h\rangle)$ (resp. $H^i(Y,I\Omega_Y^\bullet\langle D\rangle)\ox_K\C\iso H^i(Y_h,I_h\Omega_{Y_h}^\bullet\langle D_h\rangle)$), where $(-)_h$ is the complex analytification functor and $I_h$ is the defining sheaf of $D_h$. Thus we conclude by  \cite[\S3]{Del:TDH2} (resp. \cite[Part II, Ex. 7.25]{PetSte:Mix08}  for the compact support case).
\end{proof}
\begin{rmk}
	The degeneracy of the  spectral  sequence in (ii) of the above proposition can be proved algebraically (\cite{DelIll:Rel87}). We don't know an algebraic proof of the isomorphism in (i).
\end{rmk}
In the sequel a morphism of pairs $(X,Y)$ as above is a commutative square
\begin{equation*}
\xymatrix{
X \ar[d]_{u}\ar[r]^{g}&    Y\ar[d]^{v}\\
X'\ar[r]_{g'} & Y'  }
\end{equation*}
We say that the morphism is strict if the square is cartesian.\\
The complex $\Omega_Y^\bullet\langle D\rangle$ (resp. $I\Omega_Y^\bullet\langle D\rangle$) is a complex of Zariski sheaves over $Y$ functorial with respect to the pair $(X,Y)$ (resp. strict morphisms of pairs). We can construct two different (generalized) Godement resolutions (see \S~\ref{sec:gdm}): one using Zariski  points; the other via the $K$-analytic space associated to $Y$. \\
We will write $Pt(Y)=Pt(Y_\zar)$ for the set of Zariski points of $Y$ with the discrete topology; $Pt(Y_\an)$ for the discrete site of rigid points (Def.~\ref{def:rigpt}) of $Y_\an$; $Pt(Y_\an)\sqcup Pt(Y)$ is the direct sum in the category of sites.
\begin{prp}\label{prp:godementm derham comparison}
	With the notations of  Prop.~\ref{prp:derham} let $w: Y_\an\to Y_\zar$ be the canonical map from the rigid analytic site to the Zariski site of $Y$. Then for any Zariski sheaf $\Omega$ on $Y$ there is a diagram 
	\[
		\Gdm_{Pt(Y)}(\Omega )\gets \Gdm_{Pt(Y_\an)\sqcup Pt(Y)}(\Omega)\to w_*\Gdm_{Pt(Y_\an)}(w^*\Omega) 
	\]
	If we further consider $\Omega =\Omega_{Y}^\bullet\langle D \rangle$ (resp. $\Omega =I\Omega_{Y}^\bullet\langle D \rangle$), then  
the diagram is functorial with respect to the pair $(X,Y)$ (resp. $(X,Y)$ and strict morphisms). The same holds true with $\Gdm^2_?$ instead of $\Gdm_?$.
\end{prp}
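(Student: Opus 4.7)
The strategy is to produce both arrows as instances of Lemma~\ref{lmm:gdmmorphism} applied to two well-chosen commutative squares of sites, and then to deduce functoriality and the $\Gdm^2$-version by naturality.

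First, let $u \colon Pt(Y_\an) \sqcup Pt(Y) \to Y_\zar$ denote the combined morphism of sites given by $w \circ \xi_\an$ on $Pt(Y_\an)$ and by $\xi_\zar$ on $Pt(Y)$. The natural inclusions of sites $i \colon Pt(Y) \hookrightarrow Pt(Y_\an) \sqcup Pt(Y)$ and $j \colon Pt(Y_\an) \hookrightarrow Pt(Y_\an) \sqcup Pt(Y)$ fit into two commutative squares
\begin{equation*}
\xymatrix@C=28pt@R=18pt{
Pt(Y) \ar[d]_{\xi_\zar}\ar[r]^{i} & Pt(Y_\an)\sqcup Pt(Y)\ar[d]^{u} & & Pt(Y_\an) \ar[d]_{\xi_\an}\ar[r]^{j} & Pt(Y_\an)\sqcup Pt(Y)\ar[d]^{u}\\
Y_\zar \ar[r]_{\id} & Y_\zar & & Y_\an \ar[r]_{w} & Y_\zar\ .
}
\end{equation*}
Applying Lemma~\ref{lmm:gdmmorphism} to the left square with $\mathcal{G} = \mathcal{F} = \Omega$ and $a = \id_\Omega$ gives the left arrow $\Gdm_{Pt(Y_\an)\sqcup Pt(Y)}(\Omega) \to \Gdm_{Pt(Y)}(\Omega)$. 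Applying it to the right square with $\mathcal{G} = \Omega$, $\mathcal{F} = w^*\Omega$, and $a$ equal to the unit $\Omega \to w_* w^*\Omega$ of the adjunction, yields the right arrow $\Gdm_{Pt(Y_\an)\sqcup Pt(Y)}(\Omega) \to w_* \Gdm_{Pt(Y_\an)}(w^*\Omega)$. This assembles the required three-term diagram for any Zariski sheaf $\Omega$ on $Y$.

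Next, for functoriality in the pair $(X,Y)$, a morphism $v \colon Y \to Y'$ (resp.\ strict morphism) induces compatible morphisms of sites $v_\zar$, $v_\an$, together with the standard pull-back $v_\zar^*\Omega_{Y'}^\bullet\langle D'\rangle \to \Omega_Y^\bullet\langle D\rangle$ (resp.\ $v_\zar^* I'\Omega_{Y'}^\bullet\langle D'\rangle \to I\Omega_Y^\bullet\langle D\rangle$, where strictness is needed to identify $v_\zar^{-1}I'$ with $I$). Each of the two squares above is compatible with $v$, so Lemma~\ref{lmm:gdmmorphism}, applied once more with $f = v_\zar$, produces vertical morphisms between the three-term diagrams for $(X,Y)$ and $(X',Y')$ and gives commutativity of the resulting hexagon. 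This is the required functoriality.

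Finally, because $\Gdm_P$ is defined by iterating the monad $u_*u^*$, the double resolution $\Gdm_P^2 = \Gdm_P \circ \Gdm_P$ is itself obtained by applying the same construction componentwise; hence all arrows above extend immediately to $\Gdm^2_?$ by re-applying Lemma~\ref{lmm:gdmmorphism} to the Godement complexes produced at the first step. The only delicate point of the whole argument is the verification that in the $I\Omega^\bullet_Y\langle D\rangle$-case the inverse image is compatible with the defining ideals, which is precisely why one must restrict to strict (cartesian) morphisms of pairs.
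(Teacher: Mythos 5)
Your proof is correct and follows essentially the same route as the paper: both arrows come from Lemma~\ref{lmm:gdmmorphism} applied to the two commutative squares of sites over $\id_{Y_\zar}$ and $w$ (with the unit $\Omega\to w_*w^*\Omega$), and functoriality plus the $\Gdm^2$ case follow from naturality of the construction. The paper simply records the juxtaposed three-column diagram of sites instead of the two separate squares; your remark on strictness being needed for the ideal sheaf $I$ is a correct elaboration of what the paper leaves implicit.
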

\begin{proof}
	The first claim follows from Lemma~\ref{lmm:gdmmorphism} applied to the following commutative diagram of sites
	\begin{equation*}
	\xymatrix{
	Pt(Y_\an) \ar[d]_{}\ar[r]^{}&    Pt(Y_\an)\sqcup Pt(Y)\ar[d]^{}&\ar[l]Pt(Y)\ar[d]\\
	Y_\an\ar[r]_{w} & Y_\zar &\ar[l]^{\id} Y_\zar   }
	\end{equation*}
	with respect to the canonical map $\Omega\to w_*w^* \Omega$.
	
	The second claim follows from the functoriality of the complex $\Omega_{Y}^\bullet\langle D \rangle$ (resp. $I\Omega_{Y}^\bullet\langle D \rangle$).
\end{proof}
\begin{prp}
	\begin{enumerate}[(i)]
		\item Let $g:X\to Y$ be a normal crossing compactification as in Prop.~\ref{prp:derham}. Then there is a quasi isomorphism of  complexes of sheaves
		\[
		\Omega_Y^\bullet\langle D\rangle\to \Gdm^2_{Pt(Y)}(\Omega_Y^\bullet\langle D\rangle)\qquad  (\text{resp.}\  I\Omega_Y^\bullet\langle D\rangle\to \Gdm^2_{Pt(Y)}I\Omega_Y^\bullet\langle D\rangle )
		\]
		and the stupid filtration on $\Omega_Y^\bullet\langle D\rangle$  (resp. $I\Omega_Y^\bullet\langle D\rangle$) induces a filtration on the right term of the morphism.
		\item Let ${\rm Sm}/K$ (resp. ${\rm Sm}_c/K$) be the category of algebraic and smooth $K$-schemes  (resp. with proper morphisms). Let $D^b_\dR(K)$ the derived  category of the exact category of filtered vector spaces.
		Then there exist two   functors
		\[
			R\Gamma_\dR(-):({\rm Sm}/K)^\circ\rightarrow D^b_\dR(K) \qquad  R\Gamma_{\dR,c}(-):({\rm Sm}_c/K)^\circ\rightarrow D^b_\dR(K)
		\]
		such that $R\Gamma_\dR(X)=\Gamma(Y,\Gdm^2_{Pt(Y)}(\Omega_Y^\bullet\langle D\rangle))$, $R\Gamma_{\dR,c}(X)=\Gamma(Y,\Gdm^2_{Pt(Y)}I\Omega_Y^\bullet\langle D\rangle)$, with the same notation of (i).
		\item The filtered complexes $R\Gamma_\dR(X), \ R\Gamma_{\dR,c}(X)$ are strict (see \ref{rmk:strictness}).
	\end{enumerate}
\end{prp}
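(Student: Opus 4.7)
\smallskip
\noindent\textbf{Proof proposal.} The plan is to treat the three claims in order, using the formalism of generalized Godement resolutions developed in \S\ref{sec:gdm} to translate all of them into easy consequences of facts that have already been established in the excerpt (most importantly Prop.~\ref{prp:derham}). I expect the genuinely delicate point to be functoriality in~(ii), where the chosen compactification $Y$ enters the definition of $R\Gamma_\dR(X)$ but a morphism $f:X\to X'$ need not extend to the compactifications; the other two parts are essentially formal.

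\smallskip
\noindent\emph{Part (i).} The Zariski site $Y_\zar$ has enough points, so the pull-back $\xi^{-1}:Sh(Y_\zar)\to Sh(Pt(Y))$ along the inclusion of Zariski points is exact and conservative. By the lemma recalled in \S\ref{sec:gdm}, the canonical map $b_{\cal F}:{\cal F}\to \Gdm_{Pt(Y)}({\cal F})$ is then a quasi-isomorphism for any (complex of) sheaves. Applying this twice (and using that the totalization of a double complex of quasi-isomorphisms is a quasi-isomorphism in the bounded setting in which we work) gives the quasi-isomorphism in~(i), both for $\Omega_Y^\bullet\langle D\rangle$ and for $I\Omega_Y^\bullet\langle D\rangle$. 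The stupid filtration $\sigma^{\ge j}$ is a filtration by subcomplexes of sheaves, and $\Gdm^2_{Pt(Y)}$ is an exact functor that preserves subobjects, so it transports $\sigma^{\ge j}$ to a filtration by subcomplexes on the Godement resolution.

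\smallskip
\noindent\emph{Part (ii).} The main issue is to make $X\mapsto \Gamma(Y,\Gdm^2_{Pt(Y)}(\Omega_Y^\bullet\langle D\rangle))$ into a well-defined functor on $(\mathrm{Sm}/K)^\circ$ (resp.\ $(\mathrm{Sm}_c/K)^\circ$), because a morphism $f:X\to X'$ need not extend to any chosen normal crossing compactifications $Y,Y'$. The standard way out, which I would follow, is Deligne's compactification-by-resolution argument \cite[\S3.2.11]{Del:TDH2}: given $f$, form the closure of the graph of $f$ inside $Y\times Y'$ and apply Hironaka to produce a third normal crossing compactification $Y''$ of $X$ sitting in a diagram $Y\xleftarrow{p} Y''\xrightarrow{q} Y'$ of pairs, with $q$ extending $f$. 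Using Prop.~\ref{prp:godementm derham comparison} for the functoriality of $\Gdm^2_{Pt(-)}$ along morphisms of pairs, together with Prop.~\ref{prp:derham}(i) which implies that $p^*$ induces a filtered quasi-isomorphism $\Gamma(Y,\Gdm^2_{Pt(Y)}\Omega^\bullet_Y\langle D\rangle)\to \Gamma(Y'',\Gdm^2_{Pt(Y'')}\Omega^\bullet_{Y''}\langle D''\rangle)$, one defines $f^*$ in $D^b_\dR(K)$ by inverting $p^*$ and composing with $q^*$. Independence of the auxiliary $Y''$ (and, for $X'=X$ and $f=\id$, of the original $Y$) is proved by the usual trick of taking a common refinement of two such $Y''$'s. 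In the compact-support case one must in addition arrange that $q^{-1}(D')\subseteq D''$ so that $q:(X,Y'')\to (X,Y')$ is a strict morphism of pairs; this can be achieved by replacing $Y''$ with a further blow-up along the relevant boundary, still by Hironaka, before invoking Prop.~\ref{prp:godementm derham comparison} for strict morphisms.

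\smallskip
\noindent\emph{Part (iii).} The canonical spectral sequence attached to the filtered complex $R\Gamma_\dR(X)=\Gamma(Y,\Gdm^2_{Pt(Y)}(\Omega_Y^\bullet\langle D\rangle))$ equipped with the stupid filtration is, after identifying the Godement resolution with the sheaf cohomology computation (using that $\Gdm^2_{Pt(Y)}$ produces flasque-acyclic resolutions), exactly the Hodge-to-de Rham spectral sequence
\[
E_1^{p,q}=H^q(Y,\Omega_Y^p\langle D\rangle)\ \Longrightarrow\ H^{p+q}(Y,\Omega_Y^\bullet\langle D\rangle)
\]
(and its analogue with $I\Omega_Y^\bullet\langle D\rangle$). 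By Prop.~\ref{prp:derham}(ii) this spectral sequence degenerates at $E_1$, and by the criterion reviewed in Remark~\ref{rmk:strictness} this is equivalent to the strictness of $R\Gamma_\dR(X)$ (resp.\ $R\Gamma_{\dR,c}(X)$) as a filtered complex, which is what had to be shown.
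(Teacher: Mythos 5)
Your proof follows essentially the same route as the paper's, which simply cites the definition of the Godement resolution for (i), Deligne's argument \cite[3.2.11]{Del:TDH2} together with \cite[Lemma 15.2.3]{Hub:Mix95} for (ii), and the degeneration of the Hodge--de Rham spectral sequence (via \cite{PetSte:Mix08}, equivalently Prop.~\ref{prp:derham}(ii) plus Remark~\ref{rmk:strictness}) for (iii); parts (i) and (iii) are correct as written. There is one localized slip in the compact-support case of (ii): the inclusion $q^{-1}(D')\subseteq D''$ that you propose to arrange is equivalent to $X\subseteq q^{-1}(X')$, which holds automatically because $q$ extends $f$; strictness of $q:(X,Y'')\to(X',Y')$ requires the \emph{reverse} inclusion $D''\subseteq q^{-1}(D')$, i.e.\ $q^{-1}(X')\subseteq X$, and a further blow-up along the boundary cannot produce it, since modifying $D''$ does not remove the excess points of $q^{-1}(X')\setminus X$. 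The correct observation is that for $f$ proper the square is automatically cartesian: $X$ is open and dense in $q^{-1}(X')$ (being dense in $Y''$), and it is also closed there because $X\to X'$ is proper while $q^{-1}(X')\to X'$ is separated --- this is precisely the content of the cited \cite[Lemma 15.2.3]{Hub:Mix95}. With that replacement your argument is complete and agrees with the paper's.
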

\begin{proof}
	i) This follows directly from the definition of Godement resolution.
	
	ii) This follows from the functoriality of the Godement resolution  with respect to morphism of pairs and 
\cite[3.2.11]{Del:TDH2} or 	 \cite[Lemma 15.2.3]{Hub:Mix95} for the compact support case.

	iii)  This follows by  \cite[Part II, \S\S4.3, 7.3.1]{PetSte:Mix08}.
\end{proof}
\section{Syntomic cohomology}
In this section we construct the $p$-adic Hodge complexes needed to define the rigid syntomic cohomology groups (also with compact support) for a smooth algebraic $\scrV$-scheme. The functoriality will be a direct consequence of the construction. 
\begin{lmm}\label{lmm:platification}
	Let $f:\scrX\to \scrV$ be a morphism of schemes and let $Z\subset \scrX_K$ be a closed sub-scheme 
of the generic fiber of $\scrX$. Then there exists a unique $\scrZ\subset \scrX$ closed sub-scheme which is flat over $\scrV$ and 
which satisfies $\scrZ_K=Z$. Thus, $\scrZ$ is the schematic closure of $Z$ in $\scrX$.
\end{lmm}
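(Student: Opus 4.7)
My plan is to treat this as the standard flat-extension lemma over a discrete valuation ring, working affine-locally on $\scrX$. The statement is symmetric in what we must prove: existence of the flat closure, identification of its generic fiber, and uniqueness. All three follow from the fact that flatness over the DVR $R$ is equivalent to being torsion-free, together with the observation that $K$ is $R$-flat.

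First I would reduce to the affine case $\scrX = \Spec(A)$, so that $Z \subset \scrX_K$ corresponds to an ideal $I \subset A_K := A \otimes_R K$. The candidate is the scheme-theoretic image of the locally closed immersion $Z \hookrightarrow \scrX$, concretely defined by the ideal
\[
	J := \Ker\bigl(A \longrightarrow A_K / I\bigr) \subset A.
\]
I would set $\scrZ := V(J) \subset \scrX$ and verify the three required properties in turn.

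For flatness of $\scrZ$ over $\scrV$, since $R$ is a DVR it suffices to check that $A/J$ is torsion-free. If $\pi \in R$ is a uniformizer and $\pi \tilde{a} \in J$ for some $\tilde{a} \in A$, then $\pi(\tilde{a} \otimes 1) \in I$; since $\pi$ is a unit in $A_K$ this gives $\tilde{a} \otimes 1 \in I$, hence $\tilde{a} \in J$. For the generic fiber, applying the exact functor $- \otimes_R K$ to the exact sequence $0 \to J \to A \to A_K/I$ yields $J \otimes_R K = I$, so $\scrZ_K = Z$. For uniqueness, suppose $\scrZ' = V(J')$ is any flat closed subscheme of $\scrX$ with $\scrZ'_K = Z$; flatness forces the map $A/J' \hookrightarrow (A/J') \otimes_R K = A_K/I$ to be injective, whence $J' = \Ker(A \to A_K/I) = J$.

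Globalizing is routine since the construction of $J$ commutes with localization on $A$, so the local flat closures glue to a global closed subscheme $\scrZ \subset \scrX$. I do not anticipate any substantive obstacle: the only mildly delicate point is remembering that $A \to A_K$ need not be injective when $A$ has $R$-torsion, which is precisely why the schematic closure is taken using the kernel to $A_K/I$ (and not a naive intersection). Everything else is bookkeeping with the DVR-flatness criterion.
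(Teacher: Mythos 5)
Your argument is correct. The paper does not give a proof at all: it simply cites \cite[2.8.5]{Gro:Ele66III} (the schematic-closure / flat-extension result of EGA~IV over a Dedekind base), whereas you supply a self-contained proof specialized to the DVR case. Your local computation is exactly the standard one underlying the EGA statement: flatness over $R$ reduces to $\pi$-torsion-freeness of $A/J$ with $J=\Ker(A\to A_K/I)$, the identification $\scrZ_K=Z$ follows by tensoring the defining exact sequence with the flat $R$-module $K$, and uniqueness follows because any flat $A/J'$ injects into $A_K/J'A_K=A_K/I$, forcing $J'=J$. You also correctly flag the one genuinely delicate point (that $A\to A_K$ need not be injective, so one must take the kernel into $A_K/I$ rather than pull back $I$ naively) and the compatibility with localization needed to glue. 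The only thing your write-up gains over the paper is self-containedness; the only thing it loses is the extra generality of the EGA reference.
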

\begin{proof}
	See \cite[2.8.5]{Gro:Ele66III}.
\end{proof}
\begin{prp}\label{prp:gncd}
	Let $\scrX$ be a smooth  scheme over $\scrV$. Then there exists a \emph{generic normal crossings compactification}, \textit{i.e.}, an open embedding $g:\scrX\to \scrY$  such that:
	\begin{enumerate}[(i)]
		\item $\scrY$ is proper over $\scrV$;
		\item  $\scrY_K$ is smooth over $K$;
		\item $\scrD_K\subset \scrY_K$ is a normal crossings divisor, where  $\scrD=\scrY\setminus \scrX$.
	\end{enumerate} 
\end{prp}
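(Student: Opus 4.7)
My plan is to combine Nagata's compactification theorem with Hironaka's resolution of singularities on the generic fiber, using Lemma~\ref{lmm:platification} to lift everything to $\scrV$.

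\textbf{Step 1: Nagata compactification.} Since $\scrX$ is separated and of finite type over the noetherian base $\scrV$, Nagata's compactification theorem produces an open immersion $\scrX \hookrightarrow \overline{\scrX}$ with $\overline{\scrX}$ proper over $\scrV$. Note that $\overline{\scrX}_K$ is then a (possibly singular) compactification of $\scrX_K$ containing the smooth open subscheme $\scrX_K$, and $\overline{\scrX} \setminus \scrX$ is a closed subscheme of $\overline{\scrX}$.

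\textbf{Step 2: Hironaka on the generic fiber.} Because $\mathrm{char}(K)=0$, Hironaka's theorem, applied to the pair $(\overline{\scrX}_K, \overline{\scrX}_K \setminus \scrX_K)$, yields a finite sequence of blow-ups along smooth centers
\[
\overline{\scrX}_{K,N} \xrightarrow{\pi_N} \overline{\scrX}_{K,N-1} \xrightarrow{\pi_{N-1}} \cdots \xrightarrow{\pi_1} \overline{\scrX}_{K,0}=\overline{\scrX}_K,
\]
with $Z_{K,i} \subset \overline{\scrX}_{K,i-1} \setminus \scrX_K$ the $i$-th center, such that each $\pi_i$ is an isomorphism above $\scrX_K$ and $\overline{\scrX}_{K,N}$ is smooth with $\overline{\scrX}_{K,N} \setminus \scrX_K$ a normal crossings divisor.

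\textbf{Step 3: Lift inductively using flat closures.} Define $\overline{\scrX}_0:=\overline{\scrX}$. Having constructed a proper $\scrV$-scheme $\overline{\scrX}_{i-1}$ whose generic fiber is $\overline{\scrX}_{K,i-1}$ and which contains $\scrX$ as an open subscheme, apply Lemma~\ref{lmm:platification} to the closed subscheme $Z_{K,i} \subset (\overline{\scrX}_{i-1})_K$ to obtain its schematic closure $\scrZ_i \subset \overline{\scrX}_{i-1}$, which is flat over $\scrV$. Let $\overline{\scrX}_i$ be the blow-up of $\overline{\scrX}_{i-1}$ along $\scrZ_i$; it is proper over $\scrV$. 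Since $\Spec K \to \scrV$ is flat, blowing up commutes with this base change, giving $(\overline{\scrX}_i)_K = Bl_{Z_{K,i}}(\overline{\scrX}_{K,i-1}) = \overline{\scrX}_{K,i}$. Moreover $\scrZ_i$, being the schematic closure of a subscheme of $(\overline{\scrX}_{i-1})_K \setminus \scrX_K$, is contained in the closed subset $\overline{\scrX}_{i-1} \setminus \scrX$; therefore $\scrX$ remains an open subscheme of $\overline{\scrX}_i$. Set $\scrY := \overline{\scrX}_N$ and $g:\scrX \hookrightarrow \scrY$; by construction $\scrY$ is proper over $\scrV$, $\scrY_K = \overline{\scrX}_{K,N}$ is smooth, and $\scrD_K = \scrY_K \setminus \scrX_K$ is a normal crossings divisor.

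The only delicate point is the inductive matching between the Hironaka tower on the generic fiber and the lifted tower over $\scrV$, which is handled cleanly by the compatibility of blow-ups with flat base change together with the uniqueness statement in Lemma~\ref{lmm:platification}; this is not a real obstacle but it is where care is required. No claim is made (or needed) about smoothness of $\scrY$ itself over $\scrV$ or about the nature of $\scrD$ over the special fiber, in keeping with the adjective \emph{generic} in the statement.
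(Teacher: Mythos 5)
Your argument is correct, but it follows a different route from the paper's main proof. The paper works the other way around: it first compactifies and resolves the generic fiber purely over $K$ (Nagata plus Hironaka give a smooth proper $Y\supset\scrX_K$ with normal crossings boundary), then glues $\scrX$ and $Y$ along the common open $\scrX_K$ to obtain a separated finite-type $\scrV$-scheme $\scrY'$ with $\scrY'_K=Y$, and finally applies the \emph{relative} Nagata compactification to $\scrY'\to\scrV$; since $Y$ is already proper over $K$, this last step does not change the generic fiber. That approach avoids any lifting of blow-up centers, at the price of having to check that the glued scheme $\scrY'$ is separated. Your proof instead compactifies over $\scrV$ first and then transports the Hironaka tower across the flat base change $\Spec K\to\scrV$ by replacing each center with its schematic closure (Lemma~\ref{lmm:platification}); the two points you need — that blow-ups commute with flat base change, and that the closure of a center contained in $(\overline{\scrX}_{i-1})_K\setminus\scrX_K$ stays inside the closed set $\overline{\scrX}_{i-1}\setminus\scrX$, so $\scrX$ survives as an open subscheme — are both handled correctly. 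This is essentially the strategy of the remark that follows the proposition in the paper, except that the remark invokes W{\l}odarczyk's embedded resolution and therefore has to assume $\overline{\scrX}$ embeds in a smooth $\scrV$-scheme $\scrW$; by using the non-embedded log-resolution form of Hironaka's theorem for the pair $(\overline{\scrX}_K,\overline{\scrX}_K\setminus\scrX_K)$ you dispense with that hypothesis, so your version is, if anything, slightly cleaner than the paper's alternative argument. (One cosmetic point: to apply resolution you should arrange $\overline{\scrX}_K$ to be reduced, e.g.\ by replacing $\overline{\scrX}$ with the schematic image of $\scrX$, which is harmless since $\scrX$ is smooth over the reduced base $\scrV$.)
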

\begin{proof}
	 First by Nagata (see \cite{Con:Del07}) there exists an open embedding $\scrX_K\to Y$, where  $Y$ is a proper $K$-scheme. By the Hironaka resolution theorem we can assume that  $Y$ is a smooth compactification of $\scrX_K$ with complement  a normal crossings divisor. Now we can define  $\scrY'$ to be the glueing of $\scrX$ and  $Y$  along the common open sub-scheme $\scrX_K$. These schemes are all of finite type over $\scrV$. It follows from the construction that $\scrY'$ is a scheme, separated  and of finite type over $\scrV$, whose generic fiber is $Y$.\\
	The Nagata compactification theorem works also in a relative setting, namely for a separated and finite type morphism: hence we can find a $\scrV$-scheme $\scrY$ which is a compactification of $\scrY'$ over $\scrV$. Then we get that  an open and dense embedding $h:\scrY'_K=Y\to \scrY_K$  of proper $K$-schemes. Hence $h$ is the identity and the statement is proven.
\end{proof}

\begin{rmk} We can also give another proof of the previous proposition assuming an embedding in a smooth $\scrV$-scheme.
	First by Nagata (see \cite{Con:Del07}) there exists an open embedding $\scrX\to \overline{\scrX}$. Now assume that $\overline{\scrX}$ is embeddable in a smooth $\scrV$-scheme $\scrW$ then by \cite[Th. 1.0.2]{Wod:Sim05}  we can get a resolution of the $K$-scheme $\overline{\scrX}_K$  by making a sequence of blow-up with respect to a family of closed subset in good position with respect to the regular locus of $\overline{\scrX}_K$, in particular  $Z_i\cap \scrX_K=\vuoto$. One can perform the same construction directly over $\scrV$, replacing the closed $Z_i\subset \scrW_K$ with their  Zariski closure $\scrZ_i$ in $\scrW$. By hypothesis  $Z_i\subset \scrW_K\setminus \scrX_K$, hence its closure $\scrZ_i$ is contained in $\scrW\setminus \scrX$ and  $(\scrZ_i)_K=Z_i$, by  Lemma~\ref{lmm:platification}. The construction doesn't affect what happens on the generic fiber because the blow-up construction is local and behaves well with respect to open immersions (See \cite[Ch.II 7.15]{Har:Alg77}). This will give $\scrY$ as in the proposition.
\end{rmk}

 From now on we keep the notation of Prop.~\ref{prp:gncd} with $g:\scrX\to \scrY$  being fixed. To simplify the notations we denote by ${\cal X}$ (resp. $\cal Y$) the rigid analytic space associated to the generic fiber of $\scrX$ (resp. $\scrY$), usually denoted by $\scrX_K^\an$.  Let $w:{\cal Y}\to (\scrY_K)_\zar$ be the canonical map of sites.\\
Sometimes we will simply write $\Gdm_\an=\Gdm_{Pt(\cal U)}$, if the rigid space $\cal U$ is clear from the context. Similarly we write $\Gdm_\zar=\Gdm_{Pt(X)}$ to denote the Godement resolution with respect to  Zariski points of a $K$-scheme $X$.
\begin{lmm}\label{lmm:connecting}
	With the above notations we have the following  morphisms of complexes of $K$-vector spaces
	\[
	a: \Gamma ({\cal Y},\Gdm_\an j^\dag \Gdm_\an  w^*\Omega_{\scrY_K}^\bullet\langle \scrD_K \rangle)\to \Gamma ({\cal X},\Gdm_\an j^\dag \Gdm_\an \Omega_{\cal X}^\bullet ) \to 	R\Gamma_\rig(\scrX_k/K)_{\scrY_k,\widehat{\scrY}} 
	\]
	and
	\[
	b:\Gamma ({\cal Y},\Gdm_\an \Gamma_{]\scrX_k[} \Gdm_\an  w^* I\Omega_{\scrY_K}^\bullet\langle \scrD_K \rangle)\to	\Gamma ({\cal X},\Gdm_\an \Gamma_{]\scrX_k[} \Gdm_\an \Omega_{\cal X}^\bullet ) \to  R\Gamma_{\rig,c}(\scrX_k /K)_{\scrY_k,\widehat{\scrY}}\ .
	\]
	All the maps are quasi-isomorphisms. We denote by $a$ (resp. $b$) the composition of the maps in the first (resp. second) diagram
\end{lmm}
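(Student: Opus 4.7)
The plan is to realise each map as the composition of (i) a restriction along the open immersion ${\cal X}\hookrightarrow{\cal Y}$, combined with the identification of the log (resp.\ log-with-ideal) de Rham complex with the ordinary de Rham complex on the open part, followed by (ii) the structural map into the colimit that defines the Besser-type complex, taking $\cal X$ itself as a strict neighborhood of the tube $]\scrX_k[_{\widehat{\scrY}}$ in $\cal Y$.

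For map $a$, I would first observe that since $\scrX_K=\scrY_K\setminus\scrD_K$, restriction gives a canonical isomorphism $w^*\Omega^\bullet_{\scrY_K}\langle\scrD_K\rangle|_{\cal X}\iso\Omega^\bullet_{\cal X}$ (the logarithmic poles disappear on the open complement). Applying Lemma~\ref{lmm:gdmmorphism} to the diagram of sites associated with the open immersion $Pt({\cal X})\to Pt({\cal Y})$ shows that the Godement resolutions are compatible with restriction. Because $\scrX$ is smooth over $\scrV$ with $\scrD=\scrY\setminus\scrX$, the rigid analytic open $\cal X$ is a strict neighborhood of $]\scrX_k[_{\widehat{\scrY}}$ in $\cal Y$; thus the overconvergent sections functor $j^\dag$ along the tube is supported in $\cal X$, and the restriction map is a quasi-isomorphism. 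The second arrow is then the canonical map from the $\cal U=\cal X$ term into the colimit of Remark~\ref{rmk:building block}. It is a quasi-isomorphism because the strict neighborhoods of $]\scrX_k[$ contained in $\cal X$ are cofinal in the full system, and once $j^\dag$ is applied the resulting pro-system stabilises — this is the standard independence of the rigid complex from the choice of strict neighborhood.

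For map $b$ the argument is entirely parallel, replacing $j^\dag$ by $\underline{\Gamma}_{]\scrX_k[}$ and $w^*\Omega^\bullet_{\scrY_K}\langle\scrD_K\rangle$ by $w^* I\Omega^\bullet_{\scrY_K}\langle\scrD_K\rangle$. The restriction of $I\Omega^\bullet_{\scrY_K}\langle\scrD_K\rangle$ to $\scrX_K$ is again $\Omega^\bullet_{\scrX_K}$ since the ideal $I$ becomes trivial away from $\scrD_K$. As the tube $]\scrX_k[$ lies inside $\cal X$, the sections-with-support functor $\underline{\Gamma}_{]\scrX_k[}$ is insensitive to the complement ${\cal Y}\setminus{\cal X}$, which produces the first quasi-isomorphism; the second arrow is the structural map into the colimit defining $R\Gamma_{\rig,c}$ from Prop.~\ref{prp:rig cpt} and is a quasi-isomorphism by the same cofinality argument together with Lemma~\ref{lmm:compactsupportrigid}(i), which guarantees that each term in the colimit already computes compactly-supported rigid cohomology.

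The main obstacle is to pin down cleanly that $\cal X$ is a strict neighborhood of $]\scrX_k[_{\widehat{\scrY}}$ in $\cal Y$, and to verify the compatibility of the two Godement resolutions on $\cal X$ and $\cal Y$ under the open immersion. Both are standard but require care: the former relies on the smoothness of $\scrX$ over $\scrV$ (which controls how $\scrX_k$ sits in $\widehat{\scrY}$), while the latter is an application of Lemma~\ref{lmm:gdmmorphism} to the associated diagram of points. Once these are established, the quasi-isomorphism claims reduce to properties already recorded earlier in the paper.
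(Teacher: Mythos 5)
Your proposal follows essentially the same route as the paper: the right-hand arrows are the structural maps into the colimit over strict neighbourhoods of $]\scrX_k[_{\widehat{\scrY}}$ (of which ${\cal X}$ is one, simply because $\scrX$ is open in the proper $\scrY$ — smoothness is not what is used there), and the left-hand arrows come from the canonical comparison of the analytified log complex with the de Rham complex of the open part together with Lemma~\ref{lmm:gdmmorphism}. The only ingredient the paper makes explicit that you leave implicit is the commutation $j^\dag g^\an_* = g^\an_* j^\dag$ (resp.\ its analogue for $\underline{\Gamma}_{]\scrX_k[}$), needed to pass the overconvergent-sections (resp.\ sections-with-support) functor through the pushforward along ${\cal X}\hookrightarrow{\cal Y}$ between the two applications of Lemma~\ref{lmm:gdmmorphism}.
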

\begin{proof}
	By construction (see Remark~\ref{rmk:building block} and Prop.~\ref{prp:rig cpt}) $R\Gamma_\rig (\scrX_k/K)_{\scrY_k,\widehat{\scrY}}$ (resp. $R\Gamma_{\rig,c}(\scrX_k /K)_{\scrY_k,\widehat{Y}}$) is a direct limit of complexes indexed over the strict neighborhoods of $]\scrX_k[_{\hat \scrY}$ and $\cal X$ is one of them. Hence the map on the right (of both diagrams) comes from the universal property of the direct limit.
	
	For the map on the left consider first the canonical inclusion of algebraic differential forms with log poles into the analytic ones $w^* \Omega_{\scrY_K}^\bullet\langle \scrD_K \rangle\to g^\an_* \Omega_{\cal X}^\bullet$. By Lemma~\ref{lmm:gdmmorphism} we get a map $\Gdm_{Pt({\cal Y})}w^* \Omega_{\scrY_K}^\bullet\langle \scrD_K \rangle\to g^\an_* \Gdm_{Pt({\cal X})} \Omega_{\cal X}^\bullet$. Then applying the $j^\dag$ functor and noting that $j^\dag g^\an_*=g_*^\an j^\dag$ (see \cite[5.1.14]{Le-:Rig07}) one obtains a 
morphism $j^\dag\Gdm_{Pt({\cal Y})}w^* \Omega_{\scrY_K}^\bullet\langle \scrD_K \rangle\to g^\an_*j^\dag  \Gdm_{Pt({\cal X})} \Omega_{\cal X}^\bullet$. This is what we need to apply  Lemma~\ref{lmm:gdmmorphism} again and conclude the proof for the first diagram. 
	
	For the second diagram one repeats the same argument using \cite[5.2.15]{Le-:Rig07}.
\end{proof}
\begin{lmm}\label{lmm:sp cosp}
	With the above notations we have the following  morphisms of complexes of $K$-vector spaces
	\[
	 a':\Gamma ({\cal Y},\Gdm_\an^2 w^*\Omega_{\scrY_K}^\bullet\langle \scrD_K \rangle) \to \Gamma ({\cal Y},\Gdm_\an j^\dag \Gdm_\an  w^*\Omega_{\scrY_K}^\bullet\langle \scrD_K \rangle) 
	\]
	and
	\[
		b': 	\Gamma ({\cal Y},\Gdm_\an \Gamma_{]\scrX_k[} \Gdm_\an  w^* I\Omega_{\scrY_K}^\bullet\langle \scrD_K \rangle)\to 	\Gamma ({\cal Y},\Gdm_\an^2 w^* I\Omega_{\scrY_K}^\bullet\langle \scrD_K \rangle)\ .
	\]
\end{lmm}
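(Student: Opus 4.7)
For $a'$, I would exploit the natural transformation $\id \to j^\dag$ of endofunctors on sheaves on $\cal Y$, where $j^\dag = j^\dag_{\scrX_k}$ is the overconvergent-sections functor along the tube $]\scrX_k[_{\widehat{\scrY}}$. Recall that $j^\dag$ is a filtered colimit over strict neighborhoods $V$ of $]\scrX_k[_{\widehat{\scrY}}$ in $\cal Y$; since $\cal Y$ is itself such a neighborhood, each unit-of-adjunction map $\cal F \to j_{V*} j_V^{-1}\cal F$ passes to the colimit to yield a natural map $\cal F \to j^\dag\cal F$. Applying this to $\cal F := \Gdm_\an w^*\Omega_{\scrY_K}^\bullet\langle \scrD_K \rangle$, then applying $\Gdm_\an$ componentwise (functorial by Lemma~\ref{lmm:gdmmorphism} applied to the identity map of sites $\cal Y\to \cal Y$) and finally taking global sections on $\cal Y$, produces $a'$.

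For $b'$, I would use instead the tautological natural inclusion $\underline{\Gamma}_{]\scrX_k[}\cal F \hookrightarrow \cal F$ of sections with support in the tube into arbitrary sections. Applied to $\cal F := \Gdm_\an w^* I\Omega_{\scrY_K}^\bullet\langle \scrD_K \rangle$, then followed by another $\Gdm_\an$ and $\Gamma(\cal Y,-)$, this yields $b'$.

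No deep obstacle arises here; the lemma is essentially a book-keeping statement, and the Godement machinery converts any morphism of (complexes of) sheaves into one of their Godement resolutions. The only care needed is to apply $\Gdm_\an$ in the correct order so that source and target match the expressions in the statement, and to verify that all the sheaf functors involved ($\Gdm_\an$, $j^\dag$, $\underline{\Gamma}_{]\scrX_k[}$) are functorial enough to accept a natural transformation between their arguments, which is immediate. No quasi-isomorphism is being claimed at this stage; after composition with Lemma~\ref{lmm:connecting}, these maps will realize the specialization and cospecialization between algebraic de Rham and rigid cohomology, which are not in general isomorphisms.
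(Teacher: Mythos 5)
Your proposal is correct and matches the paper's own argument: the paper likewise obtains $a'$ from the canonical map $\Omega\to j^\dag\Omega$ and $b'$ from $\underline{\Gamma}_{]\scrX_k[}\Omega\to\Omega$, applied with $\Omega=\Gdm_\an w^*\Omega_{\scrY_K}^\bullet\langle \scrD_K\rangle$ (resp.\ $\Gdm_\an w^*I\Omega_{\scrY_K}^\bullet\langle \scrD_K\rangle$), followed by another $\Gdm_\an$ and global sections. Your extra remark justifying the unit $\id\to j^\dag$ via the colimit over strict neighborhoods is a harmless elaboration of the same idea.
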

\begin{proof}
	The map $a'$ (resp. $b'$) is induced by the canonical map $\Omega\to j^\dag \Omega$  (resp. $\Gamma_{]\scrX_k[}\Omega\to \Omega$), where  $\Omega$ is  an abelian sheaf on $\cal Y$. In particular we consider $\Omega=\Gdm_\an w^*\Omega_{\scrY_K}^\bullet\langle \scrD_K \rangle)$ (resp. $\Omega=\Gdm_\an w^* I\Omega_{\scrY_K}^\bullet\langle \scrD_K \rangle)$). To conclude the proof apply the 
functor $\Gdm_\an$ again  and take global sections.
\end{proof}
\begin{prg}[Construction]\label{prg:construction}
	Now we put together all we have done  getting  a diagram, say $R\Gamma'(\scrX)$,  of complexes of $K$-vector spaces 
{\footnotesize	\begin{equation*}
	\xymatrix@C-68pt@R=15pt{
	&R\Gamma_\rig(\scrX/K)_{}\quad & &\quad {R\Gamma}_\rig(\scrX/K)_{\scrY_k,\hat \scrY}& &\Gamma({\cal Y},\Gdm_\an^2  w^*\Omega_{\scrY_K}\langle \scrD_K \rangle)& &\Gamma(\scrY_K,\Gdm_{\zar}^2  \Omega_{\scrY_K}\langle \scrD_K \rangle) \\
	\quad R\Gamma_\rig(\scrX/K_0)_{}\qquad \ar[ur]^{\alpha_1}& &\ar[lu]_{\alpha_2}  \quad \widetilde{R\Gamma}_\rig(\scrX/K)_{\scrY_k,\hat \scrY}\quad \ar[ur]^{\alpha_3} & &\ar[lu]_{\alpha_4}\Gamma({\cal Y},\Gdm_\an^2  w^*\Omega_{\scrY_K}\langle \scrD_K \rangle)\ar[ur]^{\alpha_5}& & \ar[lu]_{\alpha_6}\Gamma(\scrY_K,\Gdm_{\an +\zar}^2  \Omega_{\scrY_K}\langle \scrD_K \rangle)\ar[ru]^{\alpha_7}& & \ar[lu]_{\alpha_8}\Gamma(\scrY_K,\Gdm_{\zar}^2  \Omega_{\scrY_K}\langle \scrD_K \rangle)
	}
	\end{equation*}}
	where $\alpha_1,\alpha_5,\alpha_8$ are the identity maps; $\alpha_2,\alpha_3$ are the maps of Prop.~\ref{prp:rig}; 
$\alpha_4$ is the composition of $a\circ a'$  (see Lemma~\ref{lmm:connecting} and Lemma~\ref{lmm:sp cosp}); 
and $\alpha_6,\alpha_7$ are defined in Prop.~\ref{prp:godementm derham comparison}. By repeatedly applying the quasi-pushout 
construction we obtain a diagram of the following shape  
	\begin{equation}\label{eq:rgamma}
		R\Gamma_\rig(\scrX/K_0)\to R\Gamma_K(\scrX)\gets R\Gamma_\dR(\scrX)\ .
	\end{equation}
	It represents an object of $pHC$ which we denote by $R\Gamma(\scrX)$.

	Similarly we can construct a $p$-adic Hodge complex $R\Gamma_c(\scrX)$ associated to the diagram $R\Gamma_c'(\scrX)$ defined as follows
 {\footnotesize\begin{equation*}
	\xymatrix@C-69pt@R=15pt{
	&R\Gamma_{\rig,c}(\scrX/K)_{}\quad & &\quad {R\Gamma}_{\rig,c}(\scrX/K)_{\scrY_k,\hat \scrY}& &\Gamma({\cal Y},\Gdm_\an^2  w^* I\Omega_{\scrY_K}\langle \scrD_K \rangle)& &\Gamma(\scrY_K,\Gdm_{\zar}^2  I\Omega_{\scrY_K}\langle \scrD_K \rangle) \\
	\quad R\Gamma_{\rig,c}(\scrX/K_0)_{}\qquad \ar[ur]^{\beta_1}& &\ar[lu]_{\beta_2}  \quad \widetilde{R\Gamma}_{\rig,c}(\scrX/K)_{\scrY_k,\hat \scrY}\quad \ar[ur]^{\beta_3} & &\ar[lu]_{\beta_4}\Gamma({\cal Y},\Gdm_\an \Gamma_{]X[}\Gdm_\an  w^* I\Omega_{Y_K}\langle D \rangle)\ar[ur]^{\beta_5}& & \ar[lu]_{\beta_6}\Gamma(\scrY_K,\Gdm_{\an +\zar}^2  I\Omega_{\scrY_K}\langle \scrD_K \rangle)\ar[ru]^{\beta_7}& & \ar[lu]_{\beta_8}\Gamma(\scrY_K,\Gdm_{\zar}^2  I\Omega_{\scrY_K}\langle \scrD_K \rangle)
	}
	\end{equation*}
}	where $\beta_1,\beta_8$ are the identity maps; $\beta_2,\beta_3$ are the maps of Prop.~\ref{prp:rig}; $\beta_4$ is the map $b$  of Lemma~\ref{lmm:connecting};  $\beta_5=b'$ of Lemma~\ref{lmm:sp cosp}; and  $\beta_6,\beta_7$ are defined in Prop.~\ref{prp:godementm derham comparison}. Note that $\beta_6$ is a quasi-isomorphism by GAGA.
\end{prg}
%%%
%
%
\begin{prp}\label{prp:RGamma functors}
	Let $Sm/R$ (resp. $Sm_c/R$) be the category of algebraic and smooth $R$-schemes  (resp. with proper morphisms). 
	The previous construction induces the following  functors
	\[
		R\Gamma(-):({\rm Sm}/\scrV)^\circ\rightarrow pHD \qquad  R\Gamma_c(-):({\rm Sm}_c/\scrV)^\circ\rightarrow pHD
	\]
\end{prp}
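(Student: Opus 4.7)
To establish the proposition I need to verify two things about the construction of \S\ref{prg:construction}: first, that the diagram $R\Gamma'(\scrX)$ (and its compact-support version) depends on the generic normal crossings compactification $g:\scrX\to\scrY$ only up to canonical isomorphism in $pHD$; and second, that a morphism $f:\scrX\to\scrX'$ (proper in the compact-support case) induces a well-defined morphism in $pHD$ in the opposite direction.

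For independence of $\scrY$, I would use the standard comparison argument. Given two generic normal crossings compactifications $\scrY_1,\scrY_2$ of $\scrX$, form the closure of $\scrX$ in $\scrY_1\times_\scrV\scrY_2$ and, applying Hironaka resolution on the generic fiber to the complement of $\scrX_K$ combined with Lemma~\ref{lmm:platification} to lift the blow-up centers flatly to $\scrV$, produce a third compactification $\scrY_3$ dominating both $\scrY_i$. Functoriality of each of the seven constituent complexes of $R\Gamma'(\scrX)$ yields two maps of diagrams $R\Gamma'(\scrX)_{\scrY_i}\to R\Gamma'(\scrX)_{\scrY_3}$. The rigid-triple pieces at the left are canonically quasi-isomorphic by Prop.~\ref{prp:rig}(ii) and Prop.~\ref{prp:rig cpt}(ii); the analytic pieces are unchanged (they only depend on $\scrX_K^{\an}$ and $\scrY_K^{\an}$, and the comparison maps of Prop.~\ref{prp:godementm derham comparison} respect refinement); and the log de Rham pieces at the right are filtered quasi-isomorphic by the independence of $F^\bullet H^*_\dR$ on the log compactification (Prop.~\ref{prp:derham} and the argument of \cite[3.2.11]{Del:TDH2}). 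Applying the quasi-push-out functor of Remark~\ref{rmk:enlarging} turns each morphism of diagrams into a quasi-isomorphism of $p$-adic Hodge complexes, so the class of $R\Gamma(\scrX)$ in $pHD$ is well-defined.

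For functoriality, given $f:\scrX\to\scrX'$ with chosen compactifications $\scrY,\scrY'$, take the closure of the graph of $f$ inside $\scrY\times_\scrV\scrY'$; after blowing up loci disjoint from $\scrX$ and resolving again by Hironaka, I obtain a compactification $\scrY''$ of $\scrX$ fitting into a commutative square extending $f$ and dominating $\scrY$. Using independence to replace $\scrY$ by $\scrY''$, I get morphisms $R\Gamma'(\scrX')\to R\Gamma'(\scrX)$ at every node of the diagram, hence a morphism in $pHC$ and so in $pHD$. For the compact-support case, the restriction to proper $f$ ensures that the induced map of pairs $(\scrX,\scrY'')\to(\scrX',\scrY')$ is strict on generic fibers, so that pullback of $I\Omega_{\scrY'_K}\langle\scrD'_K\rangle$ and the compact-support rigid triples of Prop.~\ref{prp:rig cpt} both behave functorially. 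Compatibility under composition and independence of the intermediate choices of $\scrY''$ follows by the same dominating-compactification trick applied to pairs.

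The main obstacle is not any single step but the bookkeeping: verifying that, for each pair $(\scrY,\scrY'')$ and each extension $\bar f$, all eight arrows $\alpha_i$ (respectively $\beta_i$) and the successive quasi-push-out constructions commute with pullback, including the subtle transition between the rigid triple piece and the analytic Godement piece (which passes through $a,a'$ of Lemma~\ref{lmm:connecting} and $b,b'$ of Lemma~\ref{lmm:sp cosp}). The essential formal input is the naturality of Godement resolutions (Lemma~\ref{lmm:gdmmorphism}) applied to the square of sites relating $\scrY''_\an$, $\scrY''_\zar$, $\scrY'_\an$, $\scrY'_\zar$, together with the compatibility $j^\dag\circ g^{\an}_*=g^{\an}_*\circ j^\dag$ used in Lemma~\ref{lmm:connecting}. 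Once these naturalities are assembled, the functoriality passes to $pHD$ because the two bookkeeping obstructions — dependence on $\scrY''$ and on the lift $\bar f$ — are both killed by the localization at quasi-isomorphisms established in the first paragraph.
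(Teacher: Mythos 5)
Your proof is correct and follows essentially the same route as the paper: fix gncd compactifications, dominate them by resolving the closure of (the graph of) $\scrX$ in a product à la Deligne [3.2.11], invoke the functoriality of the constituent complexes (Prop.~\ref{prp:rig}, Prop.~\ref{prp:rig cpt}, Prop.~\ref{prp:godementm derham comparison}), and use properness to make the square cartesian for the compact-support case. You additionally spell out the independence of the choice of compactification and the commutation bookkeeping, which the paper's proof leaves implicit.
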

\begin{proof}
	Let $f:\scrX\to \scrX'$ be a morphism of smooth $\scrV$-schemes. To get the functoriality we just have to show that can find two gncd compactifications $g:\scrX\to \scrY$ and $g':\scrX'\to \scrY'$ and a map $h:\scrY\to \scrY'$ extending $f$, \textit{i.e.}, $hg=g'f$. We argue as in \cite[\S3.2.11]{Del:TDH2}. Fix two gncd compactifications $g':\scrX'\to \scrY'$  and  $l:\scrX\to \scrZ$. Then consider the canonical map $\scrX\to \scrZ\x \scrY'$ induced by $l$ and $g'f$. Let $\bar \scrX$ be the closure of $\scrX$ in $\scrZ\x \scrY'$ and use the same argument  as in the proof of Prop.~\ref{prp:gncd} to get $\scrY$ which is generically a resolution of the singularities of ${\bar \scrX}$. Then by Prop.~\ref{prp:godementm derham comparison} and Prop.~\ref{prp:rig} we get the functoriality of $R\Gamma(-)$.
	
	If we further assume $f$ to be proper then we can apply the same argument in order to get the commutative square  $hg=g'f$ as above. Then by properness this square is also cartesian by \cite[Lemma 15.2.3]{Hub:Mix95}. From this fact and Prop.~\ref{prp:rig cpt}, \ref{prp:godementm derham comparison} we obtain the functoriality of $R\Gamma_c(-)$ with respect to proper maps.
\end{proof}
\begin{dfn}\label{def:syn}
	Let $\scrX$ be a smooth algebraic scheme over $\scrV$. For any $n,i$ integers we define the \emph{absolute cohomology}  groups of $\scrX$
	\begin{equation}\label{eq:syn}
		H_\abs^n(\scrX,i):=\Hom_{pHD}(\K,R\Gamma(\scrX)(i)[n])\quad (=H^n(\Gamma(\K,R\Gamma(\scrX)(i))))\ ;
	\end{equation}
the \emph{absolute cohomology with compact support} groups of $\scrX$
	\begin{equation}\label{eq:sync}
	H_{\abs,c}^n(\scrX,i):=\Hom_{pHD}(\K,R\Gamma_c(\scrX)(i)[n])\quad (=H^n(\Gamma(\K,R\Gamma_c(\scrX)(i))))\ .
	\end{equation}
\end{dfn}
A direct consequence of the definition is the existence of the following long exact sequence which should be considered to be 
a $p$-adic analog of the corresponding sequence for Deligne-Beilinson cohomology \cite[Introduction]{Beu:Hig84}.  
\begin{prp}
	With the above notation we have the following long exact sequences
	\[
		\cdots\to H_\abs^n(\scrX,i)\to H^n_\rig(\scrX_k/K_0)\x F^iH^n_\dR(\scrX_K)\xrightarrow{h}H^n_\rig(\scrX_k/K_0)\x H^n_\rig(\scrX_k/K)\stackrel{+}{\rightarrow}\cdots
	\]
	where $h(x_0,x_\dR)=(\phi(x_0^\sigma)-p^ix_0,x_0\ox 1_K-sp(x_\dR))$;
	\[
		\cdots\to H_{\abs,c}^n(\scrX,i)\to H^n_{\rig,c}(\scrX_k/K_0)\x F^iH^n_{\dR,c}(\scrX_K)\xrightarrow{h_c}H^n_{\rig,c}(\scrX_k/K_0)\x H^n_{\dR,c}(\scrX_K/K)\stackrel{+}{\rightarrow}\cdots
	\]
	where $h_c(x_0,x_\dR)=(\phi_c(x_0\sigma)-p^ix_0,cosp(x_0\ox 1_K)-x_\dR)$; 
	
\end{prp}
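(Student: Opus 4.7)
The plan is to reduce to Remark~\ref{rmk:tatetwist}(iii), which provides exactly the kind of long exact sequence stated in the proposition whenever the $p$-adic Hodge complex $M^\bullet$ in question has one of its comparison maps $c$ or $s$ a quasi-isomorphism. By the definition of absolute cohomology (Def.~\ref{def:syn}) and the Ext-formula (Prop.~\ref{prp:extformula}), $H^n_\abs(\scrX,i) = H^n(\Gamma(\K, R\Gamma(\scrX)(i)))$ and $H^n_{\abs,c}(\scrX,i) = H^n(\Gamma(\K, R\Gamma_c(\scrX)(i)))$, so it suffices to identify the three components of each complex together with the relevant quasi-isomorphism assumption.

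For $R\Gamma(\scrX)$: I would trace through the zigzag $R\Gamma'(\scrX)$ of \ref{prg:construction} and observe that $\alpha_1, \alpha_5$ are identities, $\alpha_2, \alpha_3$ are quasi-isomorphisms by Prop.~\ref{prp:rig}(ii), and $\alpha_6, \alpha_7$ are quasi-isomorphisms by the GAGA content of Prop.~\ref{prp:godementm derham comparison}. The only map in the zigzag that is not a quasi-isomorphism is $\alpha_4 = a \circ a'$: the component $a$ of Lemma~\ref{lmm:connecting} is a quasi-isomorphism, but $a'$ of Lemma~\ref{lmm:sp cosp} induces the specialization map on cohomology and is therefore not a quasi-isomorphism in general. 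Iteratively collapsing the zigzag by the quasi-pushout construction of Remark~\ref{rmk:enlarging}, one obtains the three-term diagram $R\Gamma_\rig(\scrX/K_0) \xrightarrow{c} R\Gamma_K(\scrX) \xleftarrow{s} R\Gamma_\dR(\scrX)$ in which $c$ is a quasi-isomorphism and $s$ induces the specialization map $sp: H^n_\dR(\scrX_K) \to H^n_\rig(\scrX_k/K)$. Applying Remark~\ref{rmk:tatetwist}(iii) to the first case then yields the exact sequence with differential $\eta'(x_0, x_\dR) = (p^{-i}\phi(x_0^\sigma) - x_0, x_0 \otimes 1_K - sp(x_\dR))$. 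Rescaling the first component by the unit $p^i$ produces the equivalent differential $h(x_0, x_\dR) = (\phi(x_0^\sigma) - p^i x_0, x_0 \otimes 1_K - sp(x_\dR))$, and using strictness of $R\Gamma_\dR(\scrX)$ (Prop.~\ref{prp:derham}) to identify $H^n(F^i R\Gamma_\dR(\scrX)) = F^i H^n_\dR(\scrX_K)$ gives the first long exact sequence.

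For $R\Gamma_c(\scrX)$ the roles of the two sides are reversed. In the zigzag $R\Gamma_c'(\scrX)$, the maps $\beta_1, \beta_8$ are identities, $\beta_2, \beta_3$ are quasi-isomorphisms by Prop.~\ref{prp:rig cpt}, $\beta_4$ is a quasi-isomorphism by the proof of Lemma~\ref{lmm:compactsupportrigid}, and $\beta_6, \beta_7$ are quasi-isomorphisms via Prop.~\ref{prp:godementm derham comparison} and GAGA, while the "cospecialization" map $\beta_5 = b'$ of Lemma~\ref{lmm:sp cosp} is the only map not quasi-iso in general. After iterated quasi-pushout, this yields a three-term diagram in which the de Rham comparison $s$ is a quasi-isomorphism, so Remark~\ref{rmk:tatetwist}(iii) in its second form applies and produces the long exact sequence with differential $(p^{-i}\phi(x_0^\sigma) - x_0, cosp(x_0\otimes 1_K) - x_\dR)$; rescaling as before produces $h_c$, and strictness of $R\Gamma_{\dR,c}(\scrX)$ identifies $F^i H^n_{\dR,c}(\scrX_K)$. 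The principal obstacle in executing this plan is the careful verification that the iterated quasi-pushout collapse actually produces the specialization (respectively cospecialization) map as the induced map on cohomology through the surviving non-quasi-iso leg — this is where one must track $a'$ (respectively $b'$) through the various intermediate quasi-pushouts to confirm it descends to the map $sp$ (respectively $cosp$) at the level of cohomology.
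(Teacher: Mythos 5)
Your proposal is correct and follows essentially the same route as the paper: identify $H^n_\abs$ and $H^n_{\abs,c}$ with the cohomology of the mapping cones $\Gamma(\K,R\Gamma(\scrX)(i))$ and $\Gamma(\K,R\Gamma_c(\scrX)(i))$ via Prop.~\ref{prp:extformula}, observe that $c$ (resp.\ $s$) is a quasi-isomorphism for $R\Gamma(\scrX)$ (resp.\ $R\Gamma_c(\scrX)$) because the only non-quasi-isomorphism in the zigzag is $a'$ (resp.\ $b'$), and apply Remark~\ref{rmk:tatetwist}(iii). Your extra care in tracing the quasi-pushouts and in rescaling $\eta'$ by the unit $p^i$ only makes explicit what the paper leaves implicit.
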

\begin{proof}
	By Prop.~\ref{prp:extformula} the absolute cohomology is the cohomology of a mapping cone, namely $\Gamma(\K,R\Gamma(\scrX))$ (or $\Gamma_c(\K,R\Gamma(\scrX))$ for the compact support case). The above long exact sequences are easily induced from the distinguished triangle defined by the term of the mapping cone (see Remark.~\ref{rmk:tatetwist} (iii)). Indeed, one need only note that the map $c:R\Gamma_\rig(\scrX/K_0)\ox K\to R\Gamma_K(\scrX)$ (resp. $s:R\Gamma_\dR(\scrX)\to R\Gamma_K(\scrX)$) is a quasi-isomorphism.
\end{proof}
\begin{prp}\label{prp:bessercomp}
	With the above notation there is a canonical isomorphism between the absolute cohomology we have defined and the (rigid) syntomic cohomology of Besser
	\[
		H_\syn^n(\scrX,i)\iso H_\abs^n(\scrX,i)\ .
	\]
\end{prp}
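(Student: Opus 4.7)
The plan is to produce a natural quasi-isomorphism between Besser's complex $\R\Gamma_{{\rm Bes}}(\scrX,i)$ and the complex $\Gamma(\K, R\Gamma(\scrX)(i))$ which, by Prop.~\ref{prp:extformula}, computes $H_\abs^n(\scrX,i)$. Both are mapping cones of the same shape, so the bulk of the argument is to identify their constituents and differentials.

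First I would verify that, in the construction of \S\ref{prg:construction}, every intermediate arrow in the diagram $R\Gamma'(\scrX)$ is a quasi-isomorphism: $\alpha_2,\alpha_3$ by Prop.~\ref{prp:rig}(ii); $\alpha_4=a\circ a'$ by Lemmas \ref{lmm:connecting} and \ref{lmm:sp cosp}; and $\alpha_6,\alpha_7$ by Prop.~\ref{prp:godementm derham comparison} together with GAGA applied to the proper $\scrY_K$ (the third factor $\alpha_8$ and the identities are trivial). Since the quasi-pushout of Remark~\ref{rmk:enlarging} preserves quasi-isomorphisms, after iterating we obtain a representative
\[
R\Gamma_\rig(\scrX/K_0)\otimes K \xrightarrow{c} R\Gamma_K(\scrX) \xleftarrow{s} R\Gamma_\dR(\scrX)
\]
of $R\Gamma(\scrX)\in pHC$ in which both comparison maps $c$ and $s$ are quasi-isomorphisms.

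Once $c$ is a quasi-isomorphism, Remark~\ref{rmk:tatetwist}(iii) gives a quasi-isomorphism of $\Gamma(\K, R\Gamma(\scrX)(i))$ with
\[
\Cone\!\bigl(R\Gamma_\rig(\scrX/K_0)\oplus F^i R\Gamma_\dR(\scrX)\xrightarrow{\eta} R\Gamma_\rig(\scrX/K_0)\oplus R\Gamma_\rig(\scrX_k/K)\bigr)[-1],
\]
where $\eta(x_0,x_\dR)=(p^{-i}\phi(x_0^\sigma)-x_0,\; x_0\otimes 1_K - sp(x_\dR))$ and the specialization map $sp$ on cohomology is $(c^*)^{-1}\circ s^*$. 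This is, up to multiplying the first direct summand of the target by $-p^i$ (a chain-level isomorphism), precisely Besser's cone defining $\R\Gamma_{{\rm Bes}}(\scrX,i)$ whose LES is recalled in~\eqref{eq:les}. A straightforward diagram-chase then produces the desired natural quasi-isomorphism, and functoriality in $\scrX$ is inherited from Prop.~\ref{prp:RGamma functors}.

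The main obstacle is the final identification, namely that the specialization map $(c^*)^{-1}\circ s^*$ built from our $p$-adic Hodge structure agrees with Besser's specialization $sp$. To handle this, I would fix a generic normal crossings compactification $g:\scrX\to\scrY$ and observe that both maps factor through the same geometric input: GAGA identifies $R\Gamma_\dR(\scrX_K)$ with the hyper-cohomology of $w^*\Omega_{\scrY_K}^\bullet\langle \scrD_K\rangle$ on $\mathcal{Y}=\scrY_K^\an$ (Prop.~\ref{prp:derham}), and this restricts through the functor $j^\dag$ and the chosen strict neighborhood of $]\scrX_k[_{\widehat{\scrY}}$ to the over-convergent de Rham complex computing $R\Gamma_\rig(\scrX_k/K)$. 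The arrows $c$ and $s$ in our diagram $R\Gamma'(\scrX)$ are literally built from this restriction using Godement resolutions (Lemmas \ref{lmm:connecting} and \ref{lmm:sp cosp} and Prop.~\ref{prp:godementm derham comparison}), so they induce the same specialization as Besser's construction does. The remaining work is to trace this through the Godement/$j^\dag$ formalism carefully enough to recognize the two maps as the same in the derived category, which is a matter of checking commutativity of several squares of quasi-isomorphisms rather than anything structurally new.
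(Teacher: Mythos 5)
Your overall strategy --- rewrite $\Gamma(\K,R\Gamma(\scrX)(i))$ as a mapping cone via Remark~\ref{rmk:tatetwist}(iii) and match it term by term with Besser's cone --- is exactly the paper's. But your first step contains a false claim that you must remove. You assert that \emph{every} arrow of the diagram $R\Gamma'(\scrX)$, in particular $\alpha_4=a\circ a'$, is a quasi-isomorphism ``by Lemmas \ref{lmm:connecting} and \ref{lmm:sp cosp}'', and conclude that both comparison maps $c$ and $s$ of the glued object $R\Gamma(\scrX)$ are quasi-isomorphisms. Lemma~\ref{lmm:sp cosp} only \emph{constructs} $a'$; it does not (and cannot) assert it is a quasi-isomorphism. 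The map $a'$ is induced by $\Omega\to j^\dag\Omega$ and realizes precisely the specialization from the (analytic, hence by GAGA algebraic) de Rham cohomology of $\scrX_K$ to the rigid cohomology of $\scrX_k$, which is \emph{not} an isomorphism for a general smooth non-proper $\scrX/\scrV$ --- this is the stated reason the paper works in $pHD$ rather than in Bannai's rigidly glued category (see the remark following Lemma~\ref{lmm:tstructure}). Tracking the quasi-pushouts, one finds that $c$ is a quasi-isomorphism (only $\alpha_2,\alpha_3,\alpha_5,\alpha_7,\alpha_8$ are needed for this) while $s$ is not. Your argument survives because, from that point on, you only invoke the ``$c$ is a quasi-isomorphism'' case of Remark~\ref{rmk:tatetwist}(iii), which is the correct one and gives the cone of $\eta$ with target $R\Gamma_\rig(\scrX/K_0)\oplus R\Gamma_\rig(\scrX/K)$ and $sp=(c^*)^{-1}\circ s^*$; but as written the claim about $s$ is wrong and should be deleted.

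Two smaller points to complete the identification. First, Besser's $Fil^i\R\Gamma_\dR(\scrX)$ is a filtered colimit over \emph{all} normal crossings compactifications of $\scrX_K$, whereas your $F^iR\Gamma_\dR(\scrX)$ is built from one fixed gncd compactification $\scrY$; you need to observe, as the paper does, that the latter is a term of that filtered system whose transition maps are quasi-isomorphisms, so the structural map into the colimit is a quasi-isomorphism. Second, the rescaling identifying $\eta$ with Besser's $h$ multiplies the first summand of the target by $p^i$ (not $-p^i$): $p^i\bigl(p^{-i}\phi(x_0^\sigma)-x_0\bigr)=\phi(x_0^\sigma)-p^ix_0$. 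With the erroneous quasi-isomorphism claim excised and these two points added, your proof coincides with the paper's.
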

\begin{proof}
	Besser defines a complex  
	\[
	\R\Gamma_{\rm Bes}(\scrX,i):=\Cone(\R\Gamma_\rig(\scrX/K_0)\oplus {Fil}^i\R\Gamma_\dR(\scrX)\to \R\Gamma_\rig(\scrX/K_0)\oplus \R\Gamma_\rig(\scrX/K))[-1]
	\]
	and the syntomic cohomology groups (of degree $n$ and twisted $i$) by $H^n(\R\Gamma_{\rm Bes} (\scrX,i)$  (\textit{cf.} \cite[Proof of Prop. 6.3]{Bes:Syn00}). Note that, modulo the choice of the flasque resolution,  $\R\Gamma_\rig(\scrX/K_0)=R\Gamma_\rig(\scrX/K_0)$ (the left hand side is the notation used by Besser with the bold $\R$); the complex $Fil^i\R\Gamma_\dR(\scrX)$ is a direct limit over all the normal crossing compactifications of $\scrX_K$ of complexes of the form $\Gamma(Y,\Gdm^2_{Pt(Y)}\sigma^{\ge i}\Omega_{Y}\langle D\rangle)$, so that our $F^iR\Gamma_\dR (\scrX)$ is an element of this direct limit. To conclude the proof, 
recall that by  Remark~\ref{rmk:tatetwist} (iii) we obtain
	\[
		H_\abs^n(\scrX,i)\iso H^n(\Cone(R\Gamma_\rig(\scrX/K_0)\oplus F^iR\Gamma_\dR(\scrX)\to R\Gamma_\rig(\scrX/K_0)\oplus R\Gamma_\rig(\scrX/K))[-1])
	\]
	and it is easy to check that the maps in the two mapping cones are defined in the same way.
\end{proof}
\begin{rmk}
	The isomorphism $H^n(R\Gamma_\syn (\scrX,i))\iso \Hom_{pHD}(\K,R\Gamma(\scrX)(i)[n])$ can be viewed as a 
generalization of the result of Bannai \cite{Ban:Syn02}. He considers only smooth schemes $\scrX$ with a fixed compactification $\scrY$ 
and such that $\scrY\setminus \scrX=\scrD$ is a relative normal crossings divisor over $\scrV$. 
Moreover, Bannai's construction  is not functorial with respect to $\scrX$: functoriality holds only with respect 
to a so called syntomic datum. We should point out that  the category defined by Bannai  is endowed with a t-structure whose heart is the category $MF^f_K$ of (weakly) admissible filtered Frobenius modules. In our case we don't have such a nice picture.
\end{rmk}
\begin{rmk}\label{rmk:synhom}
	The category of $p$-adic Hodge complexes is not endowed with internal Hom. This is due to the fact that the Frobenius is only a quasi-isomorphism  hence we cannot invert it. In particular this happens for the complex $R\Gamma_{c}(\scrX)$: thus  we cannot define the dual $R\Gamma_c(\scrX)\dual:=\R\iHom(R\Gamma_c(\scrX),\K)$ and an absolute homology theory as
	\[
		H^{\abs}_n(\scrX,i):=\Hom_{pHD}(\K,R\Gamma_c(\scrX)\dual(-i)[-n]) \quad (\text{cf.}\ \cite[\S15.3]{Hub:Mix95}) .
	\] 
	Nevertheless the usual adjunction between Hom and $\ox$ should give a natural  isomorphism
	\[
	\Hom_{pHD}(\K,R\Gamma_c(\scrX)\dual)\iso 	\Hom_{pHD}(R\Gamma_c(\scrX),\K)
	\]
	and the right term does makes sense in our setting. This motivates the following definition.
\end{rmk}
\begin{dfn}
	With the notation of Def.~\ref{def:syn} we define the \emph{absolute homology}  groups of $\scrX$ as
	\begin{equation}\label{eq:synh}
	H^{\abs}_n(\scrX,i):=\Hom_{pHD}(R\Gamma_c(\scrX),\K(-i)[-n])\quad (=H^{-n}(\Gamma(R\Gamma_c(\scrX),\K(-i) ))   )\ .
	\end{equation}
\end{dfn}
\section{Cup product and Gysin map}
We are going to  prove that there is a morphism of $p$-adic Hodge complexes
\[
	R\Gamma(\scrX)\ox R\Gamma_c(\scrX)\to R\Gamma_c(\scrX)
\]
This induces a pairing at the level of the complexes computing absolute cohomology.
 The key point is the compatibility of the de Rham and rigid pairings with respect to the specialization and co-specialization maps.

Let us start by fixing some notation. Let  $\scrX$ be a smooth algebraic $\scrV$-scheme;   $g:\scrX\to \scrY$  be a gncd compactification; $\scrD=\scrY\setminus \scrX$ be the complement; $\cal Y$ is the rigid analytic space associated to the $K$-scheme $\scrY_K$; as before we denote by $w:{\cal Y}\to \scrY_{K,\zar}$ the canonical morphism of sites (see notations after Prop.~\ref{prp:gncd}). For any $\O_{\scrY_K}$-module $F$  we denote its pull-back by  $w^*$. 
\begin{rmk}	\label{rmk:dRpairing}
	\begin{enumerate}[(i)]
		\item The wedge product of algebraic differentials induces the following pairing
		\begin{equation*}
		{p_\dR}:\Omega_{\scrY_K}\langle \scrD_K\rangle\ox I\Omega_{\scrY_K}\langle \scrD_K\rangle\longrightarrow I\Omega_{\scrY_K}\langle \scrD_K\rangle  \ .
		\end{equation*}
		\item The analytification of  $p_\dR$ gives a pairing 
		\[
			w^*\Omega_{\scrY_K}\langle \scrD_K\rangle\ox w^* I\Omega_{\scrY_K}\langle \scrD_K\rangle \to w^*I\Omega_{\scrY_K}\langle \scrD_K\rangle
		\]
		Hence by \cite[Lemma 2.1]{Ber:Dua97} we get the following pairing
		\[
			p_\rig:j^\dag w^*\Omega_{\scrY_K}\langle \scrD_K\rangle\ox \underline{\Gamma}_{]\scrX_k[} w^*I\Omega_{\scrY_K}\langle \scrD_K\rangle \to \underline{\Gamma}_{]\scrX_k[} w^*I\Omega_{\scrY_K}\langle \scrD_K\rangle	\ .
		\]
	\end{enumerate}
\end{rmk}
\begin{lmm}[Sheaves level]
	The following diagram commutes
	{\small 
	\begin{equation*}
	\xymatrix@C50pt@R=20pt{
		{\Gdm_\an}^2 (w^* {\Omega_{\scrY_K}\langle \scrD_K\rangle})\ox \Gdm_\an^2 (w^*I\Omega_{\scrY_K}\langle \scrD_K\rangle)\ar[r]& \Gdm_\an^2 (w^*I\Omega_{\scrY_K}\langle \scrD_K\rangle)\\
		\Gdm_\an^2 (w^*\Omega_{\scrY_K}\langle \scrD_K\rangle)\ox \Gdm_\an\underline{\Gamma}_{]\scrX_k[}\Gdm_\an(w^*I\Omega_{\scrY_K}\langle \scrD_K\rangle)\ar[u]_{}\ar[d]\ar[r]^{\quad m}& \Gdm_\an\underline{\Gamma}_{]\scrX_k[}\Gdm_\an(w^*I\Omega_{\scrY_K}\langle \scrD_K\rangle)\ar[u]^{}    \ar[d]^{1}\\
\Gdm_\an j^\dag \Gdm_\an(w^*\Omega_{\scrY_K}\langle \scrD_K\rangle)\ox \Gdm_\an\underline{\Gamma}_{]\scrX_k[}\Gdm_\an (w^*I\Omega_{\scrY_K}\langle \scrD_K\rangle)	\ar[r] &  \Gdm_\an\underline{\Gamma}_{]\scrX_k[}\Gdm_\an(w^*I\Omega_{\scrY_K}\langle \scrD_K\rangle) 
}
	\end{equation*}}
	where $m:=p_\rig \circ (j^\dag\ox 1)$  by definition. 
\end{lmm}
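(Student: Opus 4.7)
The plan is to reduce the commutativity of both squares to the level of the underlying sheaves on $\cal Y$, where all the relevant pairings are defined directly from the wedge product of differential forms, and then lift the result through the Godement resolutions using their functoriality and their compatibility with tensor product.

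First I would address the upper square. At the sheaf level, the wedge pairing $p_\dR$ of Remark~\ref{rmk:dRpairing}(i) (pulled back along $w^*$) factors through the canonical inclusion $\underline{\Gamma}_{]\scrX_k[} \hookrightarrow \id$, \textit{i.e.}, the composition
\[
w^*\Omega_{\scrY_K}\langle\scrD_K\rangle \otimes \underline{\Gamma}_{]\scrX_k[} w^*I\Omega_{\scrY_K}\langle\scrD_K\rangle \longrightarrow w^*\Omega_{\scrY_K}\langle\scrD_K\rangle \otimes w^*I\Omega_{\scrY_K}\langle\scrD_K\rangle \xrightarrow{\wedge} w^*I\Omega_{\scrY_K}\langle\scrD_K\rangle
\]
agrees with the pairing landing in $\underline{\Gamma}_{]\scrX_k[} w^*I\Omega_{\scrY_K}\langle\scrD_K\rangle$ postcomposed with the inclusion. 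Then applying $\Gdm_\an$ twice (with the compatibility $\Gdm_\an(F)\otimes \Gdm_\an(G)\to \Gdm_\an(F\otimes G)$ recalled in the Remark on tensor products) together with the naturality of $\Gdm_\an$ with respect to the transformation $\underline{\Gamma}_{]\scrX_k[}\to \id$ yields the commutativity of the upper square.

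For the lower square I would invoke Berthelot's \cite[Lemma~2.1]{Ber:Dua97} in the very form used to define $p_\rig$ in Remark~\ref{rmk:dRpairing}(ii): the wedge product on $w^*\Omega\otimes w^*I\Omega$ factors through $j^\dag w^*\Omega\otimes \underline{\Gamma}_{]\scrX_k[} w^*I\Omega$, and this factorization is compatible with the canonical map $\id\to j^\dag$ on the first factor. Applying $\Gdm_\an$ to this sheaf-level identity and using once more functoriality of $\Gdm_\an$ together with the tensor-product compatibility, the lower square commutes. In both cases the identification of $m$ with $p_\rig \circ (j^\dag\otimes 1)$ on the middle row, after Godement-resolution, is precisely the content of the definition of $m$.

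The main technical obstacle is bookkeeping: one must verify that the canonical map $\Gdm_\an(F)\otimes \Gdm_\an(G)\to \Gdm_\an(F\otimes G)$ is natural in both variables with respect to the transformations $\underline{\Gamma}_{]\scrX_k[}\to \id$ and $\id\to j^\dag$, and that iterating $\Gdm_\an$ preserves this naturality. Once this is in place, the commutativity of both squares is a diagram chase involving only natural transformations of functors applied to the underlying wedge pairing, so the essential content is entirely at the level of sheaves on $\cal Y$.
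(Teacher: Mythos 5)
Your proposal is correct and follows essentially the same route as the paper: the lower square commutes by the very definition of $m=p_\rig\circ(j^\dag\ox 1)$, and the upper square commutes because $p_\rig\circ(j^\dag\ox 1)$ restricted to $w^*\Omega_{\scrY_K}\langle\scrD_K\rangle\ox\underline{\Gamma}_{]\scrX_k[}w^*I\Omega_{\scrY_K}\langle\scrD_K\rangle$ agrees with the analytified wedge pairing $w^*p_\dR$, after which everything is transported through the Godement resolutions by functoriality and compatibility with tensor products. The paper's proof is just a terser version of exactly this argument.
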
 
\begin{proof}
	The bottom square commutes by construction. Also  we get $p_\rig \circ (j^\dag\ox 1)=w^*p_\dR$ restricted to $w^*\Omega_{\scrY_K}\langle \scrD_K\rangle\ox \underline{\Gamma}_{]\scrX_k[}w^*I\Omega_{\scrY_K}\langle \scrD_K\rangle$. 
\end{proof}
\begin{prp}\label{prp:pairing}
	Let $\scrX$ be a smooth $\scrV$-scheme. Then there exists a morphism of $p$-adic Hodge complexes 
	\[
		\pi:R\Gamma(\scrX)\ox R\Gamma_c(\scrX)\to R\Gamma_c(\scrX)
	\]
	which is functorial with respect to $\scrX$ (as a morphism in $pHD$). Moreover taking the cohomology of this map we get the following compatibility
	\begin{equation*}
	\xymatrix{
	H^n_\dR(\scrX_K)\ox H^m_{\dR,c}(\scrX_K) \ar[r]^{}&   H^{n+m}_{\dR,c}(\scrX_K) \\
	H^n_\dR(\scrX_K)\ox H^m_{\rig,c}(\scrX_k)\ar[d]_{sp\ox\id}\ar[u]^{\id\ox cosp} \ar[r]_{} &   H^{n+m}_{\rig,c}(\scrX_k) \ar[d]_{\id}\ar[u]_{cosp}\\
	H^n_\rig(\scrX_k)\ox H^m_{\rig,c}(\scrX_k) \ar[r]^{}&   H^{n+m}_{\rig,c}(\scrX_k) }
	\end{equation*}
\end{prp}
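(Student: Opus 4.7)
My plan is to lift the sheaf-level pairings of Remark \ref{rmk:dRpairing} and the preceding lemma to the eight-node diagrams $R\Gamma'(\scrX)$ and $R\Gamma'_c(\scrX)$ of \ref{prg:construction}, and then invoke the tensor-product compatibility of the quasi-pushout (Remark \ref{rmk:enlarging}) to descend to a morphism in $pHC$, hence in $pHD$.

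First I would define a pairing at each of the eight columns. On the two rightmost nodes the wedge product $p_\dR$ induces, after two applications of the Zariski Godement resolution, a morphism of filtered complexes
\[
\Gdm_\zar^2\,\Omega_{\scrY_K}\langle\scrD_K\rangle\;\otimes\;\Gdm_\zar^2\, I\Omega_{\scrY_K}\langle\scrD_K\rangle\;\longrightarrow\;\Gdm_\zar^2\, I\Omega_{\scrY_K}\langle\scrD_K\rangle
\]
compatible with the stupid filtration; the analogous statement holds with $\Gdm_\zar$ replaced by $\Gdm_{\an+\zar}$ or $\Gdm_\an$, using the tensor-product compatibility of the Godement construction recalled in Section \ref{sec:gdm}. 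At the two central tube columns one uses the rigid pairing $p_\rig$. On the four leftmost columns one takes the cup product of Besser (together with its $K_0$-linear, Frobenius-compatible refinement) to obtain pairings $R\Gamma_\rig(\scrX/?)\otimes R\Gamma_{\rig,c}(\scrX/?)\to R\Gamma_{\rig,c}(\scrX/?)$ for $?=K_0,K$, and similarly for the tilded variants.

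Second, I would check that these nodewise pairings commute with the horizontal maps of the two diagrams, so that together they yield a morphism of $pHC'$-style diagrams $R\Gamma'(\scrX)\otimes R\Gamma'_c(\scrX)\to R\Gamma'_c(\scrX)$. Compatibility at the four rightmost nodes follows from the naturality of Godement with respect to $w:{\cal Y}\to \scrY_{K,\zar}$ and its tensor-product compatibility (Proposition \ref{prp:godementm derham comparison}); compatibility at the junction with the tube nodes is precisely the commutative diagram in the lemma immediately preceding the statement; compatibility on the purely rigid side is the naturality of Besser's cup product. Iterating the quasi-pushout construction of Remark \ref{rmk:enlarging} and using that it is compatible with tensor products up to homotopy, one descends this morphism of diagrams to the desired morphism $\pi$ in $pHD$. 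Functoriality in $\scrX$ is inherited from the naturality of each ingredient together with the functorial choice of gncd compactification made in the proof of Proposition \ref{prp:RGamma functors}.

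Finally, the cohomology diagram is obtained by applying $H^*$ to $\pi$: in our representative $R\Gamma_\rig(\scrX/K_0)\to R\Gamma_K(\scrX)\gets R\Gamma_\dR(\scrX)$ the gluing maps are quasi-isomorphisms by Lemma \ref{lmm:connecting}, so they induce $cosp$ and the inverse of $sp$ at the level of cohomology, and by construction $\pi^*$ carries the algebraic wedge product on one side to $p_\rig$ (hence to the rigid cup product) on the other, yielding the three squares. The main obstacle is the compatibility at the transition between the columns involving $\Gdm_\an j^\dag\Gdm_\an$ and those involving $\Gdm_\an\underline{\Gamma}_{]\scrX_k[}\Gdm_\an$, because the left factor must be pushed through $j^\dag$ while the right one must be restricted to the tube; this is exactly the content of the sheaf-level lemma stated just before the proposition, so once it is available the remainder is bookkeeping in the iterated quasi-pushout.
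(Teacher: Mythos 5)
Your proposal is correct and follows essentially the same route as the paper: the paper's proof likewise reduces to constructing a morphism of the enlarged diagrams $R\Gamma'(\scrX)\ox R\Gamma_c'(\scrX)\to R\Gamma_c'(\scrX)$ nodewise from the sheaf-level lemma and the tensor-product compatibility of the Godement resolution, and then descends through the quasi-pushout of Remark~\ref{rmk:enlarging}. You have merely spelled out the nodewise bookkeeping (and the identification of $sp$ and $cosp$ in cohomology) that the paper leaves implicit.
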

\begin{proof}
	It is sufficient to provide a pairing of the enlarged diagrams (see Remark~\ref{rmk:enlarging}). Thus we have to define a morphism of diagrams $\pi':R\Gamma'(\scrX)\ox R\Gamma_c'(\scrX)\to R\Gamma_c'(\scrX)$ (notations as in \S~\ref{prg:construction}). Then it is easy to construct $\pi'$ using the previous lemma and the  compatibility of the Godement resolution with the tensor product.
\end{proof}
\begin{crl}\label{crl:pairing}
	There is a functorial pairing (induced by $\pi$ of the previous proposition)
	\[
		H^n_\abs(\scrX,i)\ox H^m_{\abs,c}(\scrX,j)\to H^{n+m}_{\abs,c}(\scrX,i+j)\ .
	\]
\end{crl}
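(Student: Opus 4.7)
The plan is to obtain the pairing as the composition of two already-available pieces: the diagonal cup product of Lemma~\ref{lmm:diag tensor} applied with $I^\bullet=\K$, followed by the morphism $\pi$ of Prop.~\ref{prp:pairing} appropriately twisted.

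First I would invoke the Ext-formula (Prop.~\ref{prp:extformula}) to rewrite
\[
    H^n_\abs(\scrX,i)=H^n(\Gamma(\K,R\Gamma(\scrX)(i))),\qquad H^m_{\abs,c}(\scrX,j)=H^m(\Gamma(\K,R\Gamma_c(\scrX)(j))).
\]
Then Lemma~\ref{lmm:diag tensor}, taken with $I^\bullet=\K$, $M^\bullet=R\Gamma(\scrX)(i)$ and $(M')^\bullet=R\Gamma_c(\scrX)(j)$, supplies (up to a choice of $\alpha$ which is irrelevant on cohomology) a morphism of complexes of abelian groups
\[
    \cup:\Gamma(\K,R\Gamma(\scrX)(i))\ox \Gamma(\K,R\Gamma_c(\scrX)(j))\longrightarrow \Gamma(\K,R\Gamma(\scrX)(i)\ox R\Gamma_c(\scrX)(j)).
\]

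Next, Prop.~\ref{prp:pairing} furnishes the morphism $\pi:R\Gamma(\scrX)\ox R\Gamma_c(\scrX)\to R\Gamma_c(\scrX)$ in $pHD$. Tensoring $\pi$ with $\K(i)\ox \K(j)$ and using the obvious identification $\K(i)\ox\K(j)=\K(i+j)$ (trivial on all three components, as noted in Remark~\ref{rmk:tatetwist}(ii)) gives a morphism $\pi(i+j):R\Gamma(\scrX)(i)\ox R\Gamma_c(\scrX)(j)\to R\Gamma_c(\scrX)(i+j)$ in $pHD$. Applying the functor $\Gamma(\K,-)$ and composing with $\cup$ yields
\[
    \Gamma(\K,R\Gamma(\scrX)(i))\ox \Gamma(\K,R\Gamma_c(\scrX)(j))\xrightarrow{\cup}\Gamma(\K,R\Gamma(\scrX)(i)\ox R\Gamma_c(\scrX)(j))\xrightarrow{\pi(i+j)_*}\Gamma(\K,R\Gamma_c(\scrX)(i+j)).
\]
Passing to $H^{n+m}$ of this composition and using the K\"unneth map on the left produces the desired pairing.

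Finally, functoriality in $\scrX$ is inherited from the functoriality of $R\Gamma(-)$ and $R\Gamma_c(-)$ (Prop.~\ref{prp:RGamma functors}) and of $\pi$ itself (Prop.~\ref{prp:pairing}), while well-definedness in $pHD$ is the content of Prop.~\ref{prp:extformula} together with the homotopy-independence of $\cup$ guaranteed by Lemma~\ref{lmm:diag tensor}. The only mild subtlety is checking that the cup product $\cup$ is compatible with the localization at quasi-isomorphisms (so that it descends from $pHK$ to $pHD$); this is exactly the argument of \cite[1.7,1.8]{Beu:Not86} already used in the proof of the Ext-formula, so no extra work is required.
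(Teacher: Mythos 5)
Your proposal is correct and follows essentially the same route as the paper: the paper's own proof likewise twists the morphism $\pi$ of Prop.~\ref{prp:pairing} to obtain $R\Gamma(\scrX)(i)\ox R\Gamma_c(\scrX)(j)\to R\Gamma_c(\scrX)(i+j)$ and then combines Def.~\ref{def:syn} (equivalently Prop.~\ref{prp:extformula}) with the cup product of Lemma~\ref{lmm:diag tensor} applied to $I^\bullet=\K$. Your added remarks on homotopy-independence of $\cup_\alpha$ and on descent to $pHD$ only make explicit what the paper leaves implicit.
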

\begin{proof}
	Consider the pairing of the Prop.~\ref{prp:pairing}. It induces a morphism 
 $R\Gamma(\scrX)(i)\ox R\Gamma_c(\scrX)(j)\to R\Gamma_c(\scrX)(i+j)$.  We then get the corollary  by Def.~\ref{def:syn} 
and Lemma~\ref{lmm:diag tensor}.
\end{proof}
\begin{prp}[Poincaré duality]\label{prp:dual}
	Let $\scrX$ be a smooth and algebraic $\scrV$-scheme of dimension $d$. Then there is a canonical isomorphism
	\[
		H^n_\abs(\scrX,i)\iso H_{2d-n}^\abs(\scrX,d-i)\ .
	\]
\end{prp}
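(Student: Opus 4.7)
The plan is to construct the duality isomorphism explicitly via the cup product pairing of Prop.~\ref{prp:pairing} together with a trace morphism in $pHD$, and then to check bijectivity by a five-lemma comparison of long exact sequences, using classical Poincaré duality on each of the rigid, de Rham and gluing components.

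First I would construct a trace morphism $\tau\colon R\Gamma_c(\scrX)\to \K(-d)[-2d]$ in $pHD$. On cohomology this should be the de Rham trace $\tr_\dR\colon H^{2d}_{\dR,c}(\scrX_K)\to K$ on the de Rham part and the rigid trace $\tr_\rig\colon H^{2d}_{\rig,c}(\scrX_k/K_0)\to K_0$ on the rigid part. Both are Poincaré dual to the fundamental classes $[\scrX_K]_\dR$ and $[\scrX_k]_\rig$, so by Theorem~\ref{thr:key} they are compatible under the (co)specialization map; the Frobenius acts as $p^d$ on the top rigid cohomology (as recalled in the proof of Prop.~\ref{prp:syncyc}) and the de Rham trace factors through $F^dH^{2d}_{\dR,c}(\scrX_K)$ for Hodge-type reasons, so the traces respect the Frobenius and filtration conventions of $\K(-d)$. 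To promote this to a morphism in $pHD$ I would work at the chain level using the enlarged-diagram/quasi push-out formalism of Remark~\ref{rmk:enlarging} and the explicit Godement-resolution models of \S\ref{prg:construction}.

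Combining $\tau$ with the cup product $\pi$ of Prop.~\ref{prp:pairing} then yields a pairing $R\Gamma(\scrX)(i)\otimes R\Gamma_c(\scrX)(d-i)\to \K[-2d]$ in $pHD$. Given a class $\alpha\colon \K\to R\Gamma(\scrX)(i)[n]$ representing an element of $H^n_\abs(\scrX,i)$, I would associate to it the composition
\[
R\Gamma_c(\scrX)(d-i)[2d-n]\xrightarrow{\alpha\otimes\id}R\Gamma(\scrX)(i)[n]\otimes R\Gamma_c(\scrX)(d-i)[2d-n]\xrightarrow{\tau\circ\pi}\K,
\]
which represents a class in $H^\abs_{2d-n}(\scrX,d-i)$. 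This defines the candidate duality map $\Phi\colon H^n_\abs(\scrX,i)\to H^\abs_{2d-n}(\scrX,d-i)$.

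To show $\Phi$ is an isomorphism I would compare two long exact sequences. The one for the source is written down just after Def.~\ref{def:syn}. For the target, the Ext-formula (Prop.~\ref{prp:extformula}) applied with $M=R\Gamma_c(\scrX)$ and $M'=\K(i-d)[n-2d]$ produces a long exact sequence whose terms are the $K$-linear duals, with appropriate Frobenius and filtration twists, of $H^{2d-n}_{\rig,c}(\scrX_k/K_0)$, $H^{2d-n}_{\rig,c}(\scrX_k/K)$ and $H^{2d-n}_{\dR,c}(\scrX_K)$. Classical Poincaré duality (\cite[Thm.~3.4]{BalCaiFio:Poi04} for rigid, together with the analogous algebraic de Rham statement) identifies these dual groups componentwise with $H^n_\rig(\scrX_k/K_0)$, $H^n_\rig(\scrX_k/K)$ and $F^iH^n_\dR(\scrX_K)$, with matching Frobenius actions (Tate-twisted by $d$) and matching Hodge filtrations. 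Since $\pi$ is a chain-level lift of the classical cup products, $\Phi$ intertwines the two long exact sequences with these componentwise isomorphisms; the five lemma then finishes the proof.

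The main obstacle I expect is producing $\tau$ as a genuine morphism in $pHD$ rather than merely a compatible system of maps on cohomology. This amounts to lifting the cycle-class compatibility of Theorem~\ref{thr:key} to the chain level in the Godement-resolution models of \S\ref{prg:construction}, and checking that the Hodge filtration and Frobenius conventions of $\K(-d)$ are forced by the top-degree piece. Once $\tau$ is in place, the remainder is bookkeeping with strict complexes (Remark~\ref{rmk:strictness}) and with the classical dualities on each component.
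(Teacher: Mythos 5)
Your proposal is correct and follows essentially the same route as the paper: the duality is obtained from the cup product pairing of Prop.~\ref{prp:pairing} together with an identification of the top compactly supported cohomology with $\K(-d)[-2d]$, and bijectivity is reduced componentwise to classical rigid and de~Rham Poincaré duality (the paper packages this as a single chain-level quasi-isomorphism of mapping cones rather than a five-lemma comparison of the two long exact sequences, but the verification is the same). The "main obstacle" you flag --- producing the trace as a genuine morphism in $pHD$ --- is resolved in the paper exactly as you anticipate, by using the strictness of $R\Gamma_{\dR,c}(\scrX)$ so that $\tau_{\ge 2d}R\Gamma_c(\scrX)$ is an honest truncation and the trace becomes the zig-zag $\tau_{\ge 2d}R\Gamma_{c}(\scrX)\gets H^{2d}(R\Gamma_c(\scrX))[-2d]\to \K(-d)[-2d]$ of quasi-isomorphisms, the right-hand one being supplied by the trace compatibility of Theorem~\ref{thr:key}.
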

\begin{proof}
	By definition (see equations \eqref{eq:syn}and \eqref{eq:synh}) it is sufficient to prove that the complex $\Gamma(\K,R\Gamma(\scrX))$ is quasi-isomorphic to the diagram $\Gamma(R\Gamma_c(\scrX),\K[-2d](-d))$.\\
	First recall  that $\Gamma(\K,R\Gamma(\scrX))$ is defined as
	\[
		\Cone(R\Gamma_\rig(\scrX/K_0)\oplus R\Gamma_K(\scrX)\oplus F^0 R\Gamma_\dR(\scrX) \stackrel{\psi}{\rightarrow} R\Gamma_\rig(\scrX/K_0)\oplus R\Gamma_K(\scrX)\oplus R\Gamma_K(\scrX))[-1]
	\]
	where $\psi(x_0,x_K,x_\dR)=(\phi(x_0)-x_0,c(x_0\ox \id_K)-x_K,x_K-s(x_\dR))$. In order to define the desired map we need to modify this complex replacing    $R\Gamma_K(\scrX)$ with $R\Gamma_\dR(\scrX)$ as follows 
	\[
			\Cone(R\Gamma_\rig(\scrX/K_0)\oplus R\Gamma_\dR(\scrX)\oplus F^0 R\Gamma_\dR(\scrX) \stackrel{\psi'}{\rightarrow} R\Gamma_\rig(\scrX/K_0)\oplus R\Gamma_K(\scrX)\oplus R\Gamma_\dR(\scrX))[-1]
	\]
	where $\psi'(x_0,x_\dR',x_\dR)=(\phi(x_0)-x_0,c(x_0\ox \id_K)-s(x_\dR'),x_\dR'-x_\dR)$. It is easy to see that this new complex, call it  $M^\bullet$, is quasi-isomorphic to   $\Gamma(\K,R\Gamma(\scrX))$.\\
	Recall that 	 the filtered complex $R\Gamma_{\dR,c}(\scrX)$ is strict, so that the   truncation $\tau_{\ge 2d}R\Gamma_{\dR,c}(\scrX)$ is the usual truncation of complexes of $K$-vector spaces (see lemma~\ref{lmm:tstructure}~(iii) and remark~\ref{rmk:strictness}). Then the cup product induces a morphism of complexes $M^\bullet\to \Gamma(R\Gamma_c(\scrX),\tau_{\ge 2d}R\Gamma_{c}(\scrX))$ which is a quasi-isomorphism
	 by the Poincaré duality theorems for rigid and de Rham cohomology (see \cite{Ber:Dua97}, \cite{Hub:Mix95}). Explicitly this map is induced by the following commutative diagram
{\footnotesize{	\begin{equation*}
	\xymatrix{
	 R\Gamma_\rig(\scrX/K_0)\oplus R\Gamma_\dR(\scrX)\oplus F^0 R\Gamma_\dR(\scrX)\ar[d]_{\psi'}\ar[r]^{\alpha\qquad\qquad\ 	  } & \Hom^\bullet_{K_0}(N_0,\tau_{\ge 2d}N_0)\oplus \Hom^\bullet_{K}(N_K,\tau_{\ge 2d}N_K) \oplus \Hom^{\bullet,F}_{K}(N_\dR,\tau_{\ge 2d}N_\dR) \ar[d]^{\xi}  \\
	R\Gamma_\rig(\scrX/K_0)\oplus R\Gamma_\rig(\scrX/K)\oplus R\Gamma_\dR(\scrX) 
		 \ar[r]_{\beta\qquad\qquad\ }   &  \Hom^\bullet_{K_0}(N_0^\sigma,\tau_{\ge 2d}N_0)		\oplus \Hom^\bullet_{K}(N_\rig,\tau_{\ge 2d}N_K)\oplus \Hom^\bullet_{K}(N_\dR,\tau_{\ge 2d}N_K)}
	\end{equation*}
	}
}	where $N:=R\Gamma_c(\scrX)$;
	\[
	\xi(f_0,f_K,\dR)=(\phi_c\circ f_0^\sigma-f_0\circ\phi_c,c\circ(f_0\ox\id_K)-f_K\circ c,f_K\circ s-s\circ x_\dR)\ ;	
	\]
	\[
	\alpha(x_0,x_\dR',x_\dR):(y_0,y_K,y_\dR)\mapsto (x_0\cup y_0, s(x_\dR')\cup y_K,x_\dR\cup y_\dR)\ ;
	\]
	\[
	\beta(x_0,x_\rig,x_\dR):(y_0,y_\rig,y_\dR)\mapsto (x_0\cup \phi_c( y_0),x_\rig\cup y_\rig, x_\dR\cup y_\dR)\ .
	\]

	To conclude the proof it is sufficient to apply the exact functor $\Gamma(R\Gamma_c(\scrX),-)$ to the following quasi-isomorphisms
	\[
		\tau_{\ge 2d}R\Gamma_{c}(\scrX)\gets H^{2d}(R\Gamma_c(\scrX))[-2d]\to \K(-d)[-2d]\ .
	\]
\end{proof}
\begin{rmk}
	We would like to point out some technical issues regarding  Poincaré duality in syntomic cohomology. 
	\begin{enumerate}[(i)]
		\item If it were possible to define an internal Hom in the category $pHC$  of $p$-adic Hodge complexes (see Remark~\ref{rmk:synhom}), then by Prop.~\ref{prp:pairing}  one would obtain the following natural isomorphism 
		\[
			R\Gamma(\scrX)(d)[2d]\iso R\Gamma_c(\scrX)\dual
		\]
		in the triangulated category $pHD$, where $R\Gamma_c(\scrX)\dual:=\R\iHom_{pHC}(R\Gamma_c(\scrX),\K)$ is the dual of $R\Gamma_c(\scrX)$. \\
		Then one would get the duality by adjunction
		\[
			\Hom_{pHD}(\K,R\Gamma(X)(i)[n])\iso \Hom_{pHD}(\K,R\Gamma_c(X)\dual(i-d)[n-2d])\iso \Hom_{pHD}(R\Gamma_c(X)(d-i)[2d-n],\K)\ .
		\]
		\item The Grothendieck-Leray spectral sequence for absolute homology is
				\[
					E_2^{p,q}=\Hom_{pHD}(H^{-q}(R\Gamma_c(\scrX)(i))[-p],\K),
				\]
			and it degenerates to the short exact sequences
				\[
					0\to \Hom_{pHD}(H^{n}(R\Gamma_c(\scrX)(i)),\K)\to H^{\syn}_n(\scrX,i) \to \Hom_{pHD}(H^{n+1}(R\Gamma_c(\scrX)(i)),\K[1])\to 0\ .
				\]

				It follows directly from Prop.~\ref{prp:extformula} that the  group $\Hom_{pHD}(H^{n}(R\Gamma_c(\scrX)(i)),\K)$ is 
				\[
				\{(x_0,x_\dR)\in H^n_{\rig,c}(\scrX_k/K_0)\dual\x (H^n_{\dR,c}(\scrX_K)/F^{i+1})\dual\ : (x_0\ox 1_K)=x_\dR\circ cosp,\ x_0\circ \phi_c=p^{i}x_0^\sigma\}\ .
				\]
			In cohomology the Frobenius is an invertible, hence the equation $x_0\circ \phi_c=p^{i}x_0^\sigma$ is equivalent to
			$x_0=p^ix_0^\sigma\phi_c\inv$ where the latter is the formula for the Frobenius of internal  $\iHom(H^n_{\rig,c}(\scrX_k),K(-i))$ in the category of isocrystals (\textit{i.e.}, modules with Frobenius). Hence by Poincaré duality for rigid cohomology we get $\phi((x_0\dual)^\sigma)=p^{d-i}x_0\dual$, $x\dual\in H_\rig^{2d-n}(\scrX_k/K_0)$ is the cohomology class corresponding to $x_0$, where $\phi$ is the Frobenius of $H_\rig^{2d-n}(\scrX_k/K_0)$.
			\item The  absolute homology is defined via the complex $R\Gamma_c(\scrX)$, but it is not clear how to relate it to the dual of the absolute cohomology with compact support.
	\end{enumerate}
\end{rmk}
\begin{crl}[Gysin map]\label{crl:gysin}
	Let $f:\scrX\to \scrY$ be a proper morphism of smooth algebraic $\scrV$-schemes of relative dimension  $d$ and $e$, respectively. Then there is a canonical map
	\[
		f_*:H_\abs^n(\scrX,i)\rightarrow H_\abs^{n+2c}(\scrY,i+c)
	\]
	where $c=e-d$.
\end{crl}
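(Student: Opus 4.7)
The plan is to obtain the Gysin map as a composite of three arrows: two Poincaré duality isomorphisms (Prop.~\ref{prp:dual}) sandwiching the contravariant action of $f$ on the compactly-supported $p$-adic Hodge complex. Concretely, I will construct
\[
f_* : H_\abs^n(\scrX,i) \xrightarrow{\sim} H^\abs_{2d-n}(\scrX,d-i) \xrightarrow{f^\natural} H^\abs_{2d-n}(\scrY,d-i) \xrightarrow{\sim} H_\abs^{n+2c}(\scrY,i+c),
\]
and the only thing to check is that the indices on the outer two terms match.

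The middle arrow $f^\natural$ comes from the functoriality of $R\Gamma_c$ with respect to proper morphisms, which is exactly the content of Prop.~\ref{prp:RGamma functors}: the properness of $f$ produces a canonical morphism $f^* : R\Gamma_c(\scrY) \to R\Gamma_c(\scrX)$ in the triangulated category $pHD$. Tensoring with the Tate twist $\K(-d+i)$ (which is functorial on $pHD$ by Remark~\ref{rmk:tatetwist}) and then applying $\Hom_{pHD}(-,\K(-(d-i))[-(2d-n)])$ produces the desired map $H^\abs_{2d-n}(\scrX,d-i) \to H^\abs_{2d-n}(\scrY,d-i)$ by the definition of absolute homology.

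For the outer arrows, Prop.~\ref{prp:dual} gives $H_\abs^n(\scrX,i) \iso H^\abs_{2d-n}(\scrX,d-i)$ directly. On the $\scrY$ side, applying the same duality (for $\scrY$ of relative dimension $e$) we get $H_\abs^{n+2c}(\scrY,i+c) \iso H^\abs_{2e-(n+2c)}(\scrY,e-(i+c))$. Substituting $c=e-d$ yields $2e-(n+2c) = 2d-n$ and $e-(i+c) = d-i$, so the target group matches $H^\abs_{2d-n}(\scrY,d-i)$ on the nose. Composing the three arrows defines $f_*$.

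There is really no serious obstacle here; the content of the corollary is entirely contained in the preceding two results, and one only has to verify the dimension bookkeeping (done above) and observe that Poincaré duality is natural enough for this composite to be well defined in $pHD$. The mild subtlety, if any, is that naturality of the duality isomorphism is not stated as part of Prop.~\ref{prp:dual}, but it is built into the proof, since the map $M^\bullet \to \Gamma(R\Gamma_c(\scrX),\tau_{\ge 2d}R\Gamma_c(\scrX))$ used there is induced by the cup product $\pi$ of Prop.~\ref{prp:pairing}, which is itself functorial in $\scrX$; hence the two duality isomorphisms fit into a commutative square with $f^\natural$ on one side and the usual contravariant pullback of $R\Gamma_c$ on the other, making the definition of $f_*$ unambiguous.
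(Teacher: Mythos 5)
Your proof is correct and is exactly the paper's argument: the authors also obtain $f_*$ by composing Poincaré duality for $\scrX$ (Prop.~\ref{prp:dual}), the covariance of absolute homology coming from $f^*:R\Gamma_c(\scrY)\to R\Gamma_c(\scrX)$ (Prop.~\ref{prp:RGamma functors}), and Poincaré duality for $\scrY$, with the same index bookkeeping $2e-(n+2c)=2d-n$, $e-(i+c)=d-i$. Your expanded verification of the dimension shifts and the remark on naturality only make explicit what the paper leaves implicit.
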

\begin{proof}
	This is a direct consequence of the previous proposition and the functoriality of $R\Gamma_c(\scrX)$ with respect to proper morphisms.
\end{proof}
\begin{rmk}
	With the above notations we get a morphism of spectral sequences 
	\[
		g:E_2^{p,q}(\scrX):=\Hom_{pHD}(\K,H^q(\scrX)(i)[p])\to E_2^{p,q+2c}(\scrY):=\Hom_{pHD}(\K,H^{q+2c}(\scrY)(i)[p])
	\] compatible with $f_*$. The map $g$ is induced by the Gysin morphism in de Rham and rigid cohomology.
\end{rmk}
	\addcontentsline{toc}{section}{References}
	
	%\input{syn-biblio}
	%\bibliographystyle{amsalpha}
	%\bibliography{tutto}
	
{\footnotesize	}
	
\end{document}